\documentclass[pdflatex,sn-mathphys-num]{sn-jnl}


\usepackage{graphicx}%
\usepackage{multirow}%
\usepackage{amsmath,amssymb,amsfonts}%
\usepackage{amsthm}%
\usepackage{mathrsfs}%
\usepackage[title]{appendix}%
\usepackage{xcolor}%
\usepackage{textcomp}%
\usepackage{manyfoot}%
\usepackage{booktabs}%
\usepackage{algorithm}%
\usepackage{algorithmicx}%
\usepackage{algpseudocode}%
\usepackage{listings}%
\usepackage{algpseudocode}

\usepackage{array}
\usepackage{makecell}
\usepackage{rotating}
\usepackage{multirow}
\usepackage{float}
\usepackage{enumerate}
\usepackage{subcaption}
\usepackage{mathptmx}
\usepackage{mathrsfs}

\usepackage[mathcal]{euscript}

\usepackage[labelfont=bf,   
            labelsep=space 
           ]{caption}

\captionsetup[figure]{name=Fig.} 



\newtheorem{thm}{Theorem}[section]

\newtheorem{lem}[thm]{Lemma}
\newtheorem{prop}[thm]{Proposition}
\newtheorem{ass}[thm]{Assumption}

\theoremstyle{definition}

\theoremstyle{remark}
\newtheorem{rem}[thm]{Remark}                                   
\numberwithin{equation}{section}



\raggedbottom

\begin{document}

\title[A linear convergence result for the Jacobi-Proximal Alternating Direction Method of Multipliers]{A linear convergence result for the Jacobi-Proximal Alternating Direction Method of Multipliers}

\author*{\fnm{Hyelin} \sur{Choi}}\email{hlchoimath@gmail.com}

\author{\fnm{Woocheol} \sur{Choi}}\email{choiwc@skku.edu}

\affil*{Department of Mathematics, Sungkyunkwan University, Suwon 16419, Republic of Korea}
\affil{Department of Mathematics, Sungkyunkwan University, Suwon 16419, Republic of Korea}

\abstract{This paper is concerned with the Jacobi-Proximal Alternating Direction Method of Multipliers (Jacobi-Proximal ADMM) proposed in the works He, Xu, and Yuan~(J. Sci. Comput., 2015)~\cite{On_the_proximal} and Deng et al.~(J. Sci. Comput., 2017)~\cite{Parallel_multi-block}, a parallel implementation of the ADMM for large-scale multi-block optimization problems  with a linear constraint.
While the $\mathit{o}(1/k)$ convergence of the Jacobi-Proximal ADMM for the case $N \geq 3$ has been well established in previous work, to the best of our knowledge, its linear convergence for $N \geq 3$ remains unproven. We establish the linear convergence of the algorithm when the cost functions are strongly convex and smooth. Numerical experiments are presented supporting the convergence result.}

\keywords{Alternating Direction Method of Multipliers, Jacobi-Proximal ADMM, Linear Convergence, Convex Optimization}
\pacs[MSC Classification]{90C25}
\maketitle
\section{Introduction}\label{sec-1}
We consider the following convex optimization problem with $N$-block variables involving a linear constraint given as follows:
\begin{equation}\label{eq-1-19}
\begin{split}
\min_{x_1, \cdots, x_N} & \sum_{i=1}^N f_i(x_i)
\\
\textrm{subject to}~ &\sum_{i=1}^N A_ix_i = c,
\end{split}
\end{equation}
where $N \geq 2$, $x_i \in \mathbb{R}^{n_i},~ A_i \in \mathbb{R}^{m \times n_i},~c \in \mathbb{R}^m$ and $f_i :\mathbb{R}^{n_i} \to \left(-\infty, +\infty \right]~ (i=1,2,\cdots,N)$. 

The problem \eqref{eq-1-19} appears in various applications such as the image alignment problem in \cite{robust_alignment}, the robust principal component analysis model with noisy and incomplete data in \cite{recovering_lowrank}, the latent variable Gaussian graphical model selection in \cite{latent_variable,node-based_learning} and the quadratic discriminant analysis model in \cite{discriminant_analysis}.

To solve the problem \eqref{eq-1-19}, various algorithms have been proposed. 
A simple distributed algorithm for solving \eqref{eq-1-19} is dual decomposition \cite{Generalized_Lagrange}, which is essentially a dual ascent method or dual subgradient method \cite{Minimization_methods}. 
To describe it, we consider the Lagrangian for problem \eqref{eq-1-19}:
\begin{equation}\label{eq-1-16}
    \mathcal{L}(x_1, \cdots, x_N, \lambda) = \sum_{i=1}^N f_i(x_i) - \bigg\langle\lambda, \sum_{i=1}^N A_ix_i -c \bigg\rangle
\end{equation}
where $\lambda \in \mathbb{R}^m$ is the dual variable. Then, the dual decomposition updates as follows: for $k \geq 1$,
\begin{equation*}
\left\{
\begin{array}{ll}
     \left(x_1^{k+1}, x_2^{k+1},\cdots, x_N^{k+1} \right) = \arg \min_{\{x_i\}}\mathcal{L}(x_1, \cdots, x_N, \lambda^k),
     \\
     \lambda^{k+1} = \lambda^k - \alpha_k \left(\sum_{i=1}^N A_ix_i^{k+1}-c \right),
\end{array}
\right.
\end{equation*}
where $\alpha_k > 0$ is a step-size. 
Since all the $x_i$'s are separable in the Lagrangian function \eqref{eq-1-16}, the $x$-update step reduces to solving $N$ individual $x_i$- subproblems:
\begin{equation*}
    x_i^{k+1} = \arg\min_{x_i} f_i(x_i) - \big\langle \lambda^k, A_ix_i \big\rangle \quad \textrm{for} ~ i=1,2,\cdots, N,
\end{equation*}
and thus they can be carried out in parallel. 
With a suitable choice of $\alpha_k$ and certain assumptions, dual decomposition is guaranteed to converge to an optimal solution \cite{Minimization_methods}. 
However, dual decomposition exhibits slow convergence in practice, with a convergence rate of $\mathit{O}(1/\sqrt{k})$ for general convex problems.

An effective distributed algorithm for \eqref{eq-1-19} can be designed based on the alternating direction method of multipliers (ADMM) \cite{A_dual,Sur_l} using the following augmented Lagrangian:
\begin{equation*}
    \mathcal{L}_{\rho}(x_1, \cdots, x_N; \lambda) := \sum_{i=1}^N f_i(x_i) - \bigg \langle\lambda, \sum_{i=1}^N A_ix_i - c \bigg \rangle + \frac{\rho}{2}\bigg\|\sum_{i=1}^N A_ix_i -c\bigg\|^2,
\end{equation*}
where  $\rho > 0$ is a penalty parameter. 
In the context of ADMM using the augmented Lagrangian, the primal variable $x_i$ can be updated using different approaches. 
One common method is the Gauss-Seidel update, in which the primal variable $x_i$ is updated sequentially using the most recent values.
This scheme is commonly referred to as the Gauss-Seidel ADMM, and is given by:
\begin{equation*}
\begin{split}
x_i^{k+1} &= \arg\min_{x_i} \mathcal{L}_{\rho}(x_1^{k+1}, \cdots, x_{i-1}^{k+1}, x_i, x_{i+1}^k, \cdots, x_N^k, \lambda^k)
\\
& = \arg\min_{x_i}f_i(x_i) + \frac{\rho}{2} \bigg\|\sum_{j < i} A_jx_j^{k+1} + A_ix_i + \sum_{j > i} A_jx_j^k -c-\frac{\lambda^k}{\rho}\bigg\|_2^2.
\end{split}
\end{equation*}
The Gauss-Seidel ADMM was proposed by Wei and Ozdaglar \cite{Distributed_alternating}, where the authors established a convergence rate of $\mathit{O}(1/k)$ under the assumption that each objective function $f_i$ is convex.
Alternatively, the Jacobi-type update, commonly referred to as the Jacobi ADMM, provides a parallel update scheme for the primal variable $x_i$ as follows:
\begin{equation*}
\begin{split}
x_i^{k+1} &= \arg\min_{x_i} \mathcal{L}_{\rho}(x_1^{k}, \cdots, x_{i-1}^{k}, x_i, x_{i+1}^k, \cdots, x_N^k, \lambda^k)
\\
& = \arg\min_{x_i}f_i(x_i) + \frac{\rho}{2} \bigg\| A_ix_i + \sum_{j \neq i}A_jx_j^k -c-\frac{\lambda^k}{\rho}\bigg\|_2^2.
\end{split}
\end{equation*}
This scheme is suitable for parallel computation, and thus particularly effective for large-scale or big-data scenarios for the model \eqref{eq-1-19}.
Deng et al.~\cite{Parallel_multi-block} established an $\mathit{o}(1/k)$ convergence rate for the Jacobi ADMM, assuming that the objective functions $f_i$ are convex and that the matrices $A_i$ are full column-rank and mutually near-orthogonal.
However, with the same penalty parameter $\rho$, the Jacobi ADMM, which updates $x_i$ in parallel using previous-iterate information, typically exhibits inferior performance compared to the Gauss-Seidel ADMM, which updates blocks sequentially with the latest iterates.
In particular, the work \cite{On_full} found an example for which Jacobi ADMM diverges even when $N=2$.  
This issue was addressed by He, Xu, and Yuan~\cite{On_the_proximal}, who proposed the Jacobi-Proximal ADMM.
To ensure convergence, they introduced one key modification to the Jacobi ADMM:
\begin{itemize}
    \item \textbf{Proximal term $\frac{s\rho}{2} \|A_i(x_i - x_i^k)\|^2$:} 
    A proximal term $\frac{s\rho}{2} \|A_i(x_i - x_i^k)\|^2$ was added to each $x_i$-subproblem, where $\rho>0$ is the penalty parameter and $s>0$ is the proximal coefficient that enforces the new iterate to remain sufficiently close to the previous one.
\end{itemize}
Deng et al.~\cite{Parallel_multi-block} later studied the algorithm in a slightly different parameter setting, which is essentially the same as that of He, Xu, and Yuan~\cite{On_the_proximal}. 
This variant incorporates the following two modifications to the Jacobi ADMM:
\begin{itemize}
    \item \textbf{Proximal term $\mathbf{\frac{1}{2}\|x_i-x_i^k\|_{P_i}^2}$:} A proximal term $\frac{1}{2}\|x_i-x_i^k\|_{P_i}^2$ was added to each $x_i$-subproblem, where $P_i$ is a positive semi-definite matrix.
    This promotes strong convexity of the $x_i$-subproblems, thereby guaranteeing the uniqueness of their solutions even when the original objective functions are not strictly convex.
    \item \textbf{Damping parameter $\mathbf{\gamma >0}$:} The dual update
    \begin{equation}
    \lambda^{k+1} = \lambda^k - \rho \bigg(\sum_{i=1}^NA_ix_i^{k+1}-c\bigg)
    \end{equation} 
    was modified to
    \begin{equation}\label{eq-1-21}
    \lambda^{k+1} = \lambda^k - \gamma \rho\bigg(\sum_{i=1}^N A_ix_i^{k+1}-c \bigg),
    \end{equation}
    where $\gamma >0$ is a damping parameter used to control the step size and improve the algorithm's stability.
\end{itemize}

The Jacobi-Proximal ADMM is formulated as in Algorithm \ref{algo}.
\begin{algorithm}
\caption{Jacobi-Proximal ADMM}\label{algo}
\begin{algorithmic}[1]
\State \textbf{Initialize:} $x_i^0\ (i=1,2,\ldots,N)$ and $\lambda^0$;
\For{$k=0,1, \ldots$}
	\State Update $x_i$ for $i=1,\ldots,N$ in parallel by:
\begin{align}\label{eq-1-5}
&x_i^{k+1} =\arg\min_{x_i} \left\{f_i(x_i) + \frac{\rho}{2}\Bigg\|A_ix_i + \sum_{j \neq i}A_jx_j^k-c-\frac{\lambda^k}{\rho}\Bigg\|_2^2 + \frac{1}{2}\big\|x_i-x_i^k\big\|_{P_i}^2\right\};
\\\label{eq-1-6}
& \textrm{Update } \lambda^{k+1} = \lambda^k -\gamma \rho \bigg(\sum_{i=1}^N A_ix_i^{k+1} -c\bigg).
\end{align}
\EndFor
\end{algorithmic}
\end{algorithm}
Here we denote $\|x_i\|^2_{P_i}:= x_i^TP_ix_i$. 
The Jacobi-Proximal ADMM has the advantage of being highly parallelizable, as the updates for $x_i$ for each $i=1,2,\ldots, N$ can be computed simultaneously for each $i$, offering potential computational efficiency gains.
We note that the original $x_i$-subproblem \eqref{eq-1-5} contains a quadratic term of the form $\frac{\rho}{2} x_i^TA_i^TA_ix_i$.
If $A_i^TA_i$ is ill-conditioned or computationally expensive to invert, one approach is to modify $P_i$ as $P_i = D_i - \rho A_i^TA_i$, where $D_i$ is a well-conditioned and simple matrix(e.g., a diagonal matrix).
This modification eliminates the quadratic term $\frac{\rho}{2} x_i^TA_i^TA_ix_i$, making the $x_i$-subproblem easier to solve.
There are various choices for $P_i$ that can simplify the computation of the $x_i$-subproblem. 
Here are two widely used formulations for $P_i$ in \cite{On_the_global}, corresponding to the following methods:
\begin{enumerate}
    \item The \textit{standard proximal method}, where $P_i$ is taken as $P_i = \tau_i \mathbf{I}$, with $\tau_i > 0$.
    \item The \textit{prox-linear method}, where $P_i$ is taken as $P_i = \tau_i \mathbf{I} - \rho A_i^TA_i$, with $\tau_i > 0$.
\end{enumerate}
The work of Deng et al.\cite{Parallel_multi-block} established an $\mathit{o}(1/k)$ convergence result for the convergence of the Jacobi-Proximal ADMM when the proximal matrices $P_i$ satisfy $P_i \succ (\frac{N}{2-\gamma}-1)A_i^TA_i$ for $i=1,2, \ldots, N$ with $\gamma < 2$.
In particular, under specific choices of $P_i$, the convergence conditions reduce to $\tau_i > \rho (\frac{N}{2-\gamma} - 1) \|A_i\|^2$ for the \textit{standard proximal method} and $\tau_i > \frac{\rho N}{2-\gamma} \|A_i\|^2$ for the \textit{prox-linear method}.

In this work, we prove that the Jacobi-Proximal ADMM converges linearly under the assumption that each cost function $f_j$ is strongly convex. 
We derive a condition on the parameters $\rho > 0$ and $\gamma > 0$ for linear convergence, and we establish a linear convergence rate dependent on these parameters. We refer to Section \ref{sec-2} for the details and we provide a comparison of our convergence result with previous work on the problem \eqref{eq-1-19} in Table \ref{conv_result}.

\begin{sidewaystable}[htbp]
\centering
\begin{tabular}{@{} c c c c c c @{}}
\toprule
\makecell{} & 
\makecell{Ref.} & 
\makecell{Convexity} & 
\makecell{Regularity} & 
\makecell{Rank \\ Conditions} & 
\makecell{Rate} \\ 
\midrule
\makecell{Original ADMM \\ for $N=2$} &
\makecell{\cite{Local_linear}} &
\makecell{$f_1(x_1) + f_2(x_2$) \\ $=\frac{1}{2}x_1^TQx_1 + q^Tx_1+\frac{1}{2}x_2^TRx_2+r^Tx_2$} & 
\makecell{Local error bound} & 
\makecell{$B$: full column rank} &
\makecell{$\mathit{O}(\sigma^k)$(local)}  \\ 
\midrule
\makecell{Original ADMM \\ for $N=2$} &
\makecell{\cite{Linear_convergence_of_the}} &
\makecell{$f_1, f_2$: C} &
\makecell{Global error bound} &
\makecell{$A,B$: full column rank} &
\makecell{$\mathit{O}(\sigma^k)$} \\
\midrule
\makecell{Original ADMM \\ for $N=2$} & 
\makecell{\cite{Discerning_the}} & 
\makecell{Structured polyhedricity assumption \\ (see \cite[Assump.~1.3]{Discerning_the})} & 
\makecell{\textemdash} & 
\makecell{\textemdash} & 
\makecell{$\mathit{O}(\sigma^k)$(local)} \\ 
\midrule
\multirow{3}{*}{\makecell{Gauss-Seidel \\ ADMM \\ for $N \geq 3$}} & 
\multirow{3}{*}{\cite{On_the_global_linear}} & 
\makecell{$f_2,\ldots, f_N$: SC} & 
\makecell{$f_N$: $L$-smooth} & 
\makecell{$A_N$: full row rank} & 
\multirow{3}{*}{\makecell{$\mathit{O}(\sigma^k)$}} \\ 
\cmidrule{3-5}
 & 
 & 
\multicolumn{1}{c}{$f_1,\ldots, f_N$: SC} & 
\multicolumn{1}{c}{$f_1,\ldots, f_N$: $L$-smooth} & 
\multicolumn{1}{c}{\makecell{\textemdash}} & \\ 
\cmidrule{3-5}
 & 
 & 
\multicolumn{1}{c}{\makecell{$f_2,\ldots,f_N$: SC}} & 
\multicolumn{1}{c}{\makecell{$f_1,\ldots, f_N$: $L$-smooth}} & 
\multicolumn{1}{c}{\makecell{$A_1$: full column rank}} & \\ 
\midrule
\makecell{Gauss-Seidel \\ ADMM \\ for $N \geq 3$} & \makecell{\cite{On_the_linear}} & 
\makecell{$f_i(x_i) = g_i(E_ix_i) + h_i(x_i)$ \\ $g_i$: strictly C \\ $h_i$: C}& 
\makecell{
$g_i$: $L$-smooth \\
$\operatorname{epi}(h_i)$: polyhedral \\
$X_i$: compact polyhedral sets \\
Primal/Dual error bound
} & 
\makecell{$E_i$: full column rank} &
\makecell{$\mathit{O}(\sigma^k)$} \\ 
\midrule
\makecell{Jacobi \\ ADMM \\ for $N \geq 3$} &
\makecell{\cite{Parallel_multi-block}} &
\makecell{$f_1,\ldots,f_N$: C} &
\makecell{$A_i$: mutually near-orthogonal} &
\makecell{\textemdash} &
\makecell{$\mathit{o}(1/k)$} \\
\midrule
\makecell{Jacobi-Proximal\\ADMM\\for $N\geq 3$} & \makecell{\cite{Parallel_multi-block}} & 
\makecell{$f_1,\ldots,f_N$: C}& 
\makecell{\textemdash}  & 
\makecell{\textemdash} &
\makecell{$\mathit{o}(1/k)$} \\ 
\midrule
\makecell{Jacobi-Proximal\\ADMM\\for $N\geq 3$} & 
\makecell{This\\ work} & 
\makecell{$f_1,\ldots,f_N$: SC} & 
\makecell{$f_1,\ldots, f_N$: $L$-smooth} & 
\makecell{$\big[A_1^T;~ A_2^T;~ \cdots ;~A_N^T \big]$: full column rank} &
\makecell{$\mathit{O}(\sigma^k)$}  \\ 
\bottomrule
\end{tabular}
\vspace{0.1cm}
\caption[Summary of ADMM Linear Convergence Results]{ 
This table summarizes the convergence results for ADMM. 
In the `Convexity' column, C (resp., SC) means that the total cost function is assumed to be convex (resp., strongly convex). 
In the `Rate' column, $\mathit{O}(\sigma^k)$ denotes linear convergence with $0 < \sigma < 1$ ($\mathit{O}(\sigma^k)$(local) indicates local linear convergence, and $\mathit{o}(1/k)$ indicates convergence strictly faster than sublinear rate.
$N$ denotes the number of blocks of objective functions and $k$ denotes the number of iterations.
}\label{conv_result}
\end{sidewaystable}
\begin{sidewaystable}[htbp]
\centering
\begin{tabular}{@{} c c c c c c @{}}
\toprule
\makecell{} & 
\makecell{Ref.} & 
\makecell{Convexity} & 
\makecell{Regularity} & 
\makecell{Rank \\ Conditions} & 
\makecell{Rate} \\ 
\midrule
\makecell{Proximal \\ ADMM} & 
\makecell{\cite{Partial_error}} & 
\makecell{$f_1,f_2$:C} & 
\makecell{Strong Conical Hull \\Intersection Property} & 
\makecell{If $P_1 \succeq 0$: $A_1,A_2$: full column rank \\
If $P_1\succ 0$: $A_2$: full column rank} &
\makecell{$\mathit{O}(\sigma^k)$}  \\ 
\midrule
\multirow{5}{*}{\makecell{PADMM-FG}} & 
\multirow{5}{*}{\makecell{\cite{On_the_global}}} & 
\makecell{$f_1$: SC} & 
\makecell{$f_1$: $L$-smooth} & 
\makecell{$A_1$: full column rank \\ $A_2$: full row rank} & 
\multirow{5}{*}{\makecell{$\mathit{O}(\sigma^k)$}} \\ 
\cmidrule{3-5}
 & 
 & 
\multicolumn{1}{c}{\makecell{$f_1, f_2$: SC}} & 
\multicolumn{1}{c}{$f_1$: $L$-smooth} & 
\multicolumn{1}{c}{\makecell{$A_2$: full row rank}} & \\ 
\cmidrule{3-5}
 & 
 & 
\multicolumn{1}{c}{$f_1$: SC} & 
\multicolumn{1}{c}{$f_1,f_2$: $L$-smooth} & 
\multicolumn{1}{c}{\makecell{$A_1$: full column rank}} &  \\ 
\cmidrule{3-5}
 & 
 & 
\multicolumn{1}{c}{$f_1, f_2$: SC} & 
\multicolumn{1}{c}{$f_1, f_2$: $L$-smooth} & 
\multicolumn{1}{c}{\makecell{\textemdash}} & \\ 
\midrule
\multirow{3}{*}{\makecell{PADMM-FG}} & 
\multirow{3}{*}{\makecell{\cite{Discerning_the}}} & 
\makecell{Structured polyhedricity assumption \\ (see \cite[Assump.~1.3]{Discerning_the})} & 
\makecell{\textemdash} & 
\makecell{\textemdash} & 
\multirow{3}{*}{\makecell{$\mathit{O}(\sigma^k)$(local)}} \\ 
\cmidrule{3-5}
 & 
 & 
\multicolumn{1}{c}{\textemdash} & 
\multicolumn{1}{c}{\shortstack{Structured subregularity assumption \\ (see \cite[Assump.~1.4]{Discerning_the})}} & 
\multicolumn{1}{c}{\makecell{$A_1$: full column rank \\ $A_2$: full row rank}} & \\ 
\midrule
\makecell{Relaxed \\ ADMM} & 
\makecell{\cite{metric_selection}} & 
\makecell{$f_1$:SC} &
\makecell{$f_1$:$L$-smooth} &
\makecell{$A$: full row rank} &
\makecell{$\mathit{O}(\sigma^k)$} \\
\midrule
\multirow{5}{*}{\makecell{Relaxed \\ ADMM}} & 
\multirow{5}{*}{\makecell{\cite{Faster_convergence}}} & 
\makecell{$f_1$: SC} & 
\makecell{$f_1$: $L$-smooth} & 
\makecell{$A_1$: full row rank} & 
\multirow{5}{*}{\makecell{$\mathit{O}(e^{-\alpha k})$}} \\ 
\cmidrule{3-5}
 & 
 & 
\multicolumn{1}{c}{\makecell{$f_2$: SC}} & 
\multicolumn{1}{c}{$f_2$: $L$-smooth} & 
\multicolumn{1}{c}{\makecell{$A_2$: full row rank}} & \\ 
\cmidrule{3-5}
 & 
 & 
\multicolumn{1}{c}{$f_2$: SC} & 
\multicolumn{1}{c}{$f_1$: $L$-smooth} & 
\multicolumn{1}{c}{\makecell{$A_1$: full row rank}} &  \\ 
\cmidrule{3-5}
 & 
 & 
\multicolumn{1}{c}{$f_1$: SC} & 
\multicolumn{1}{c}{$f_2$: $L$-smooth} & 
\multicolumn{1}{c}{\makecell{$A_2$: full row rank}} & \\ 
\midrule
\makecell{Over-Relaxed \\ ADMM} & 
\makecell{\cite{A_general}} & 
\makecell{$f_1$:SC, $f_2$:C} & 
\makecell{$f_1$: $L$-smooth} & 
\makecell{$A_2$: full column rank} &
\makecell{$\mathit{O}(\sigma^k)$}  \\ 
\midrule
\multirow{2}{*}{\makecell{Linearized \\ADMM}} & 
\multirow{2}{*}{\makecell{\cite{Discerning_the}}} & 
\makecell{Structured polyhedricity assumption \\ (see \cite[Assump.~1.3]{Discerning_the})} & 
\makecell{\textemdash} & 
\makecell{\textemdash} & 
\multirow{3}{*}{\makecell{$\mathit{O}(\sigma^k)$(local)}} \\ 
\cmidrule{3-5}
 & 
 & 
\multicolumn{1}{c}{\textemdash} & 
\multicolumn{1}{c}{\shortstack{Structured subregularity assumption \\ (see \cite[Assump.~1.4]{Discerning_the})}} & 
\multicolumn{1}{c}{\makecell{$A_2$: full row rank}} & \\  
\bottomrule
\end{tabular}
\vspace{0.1cm}
\caption[Summary of ADMM Linear Convergence Results]{ 
This table summarizes the convergence results for several variants of ADMM in the case $N=2$. Here, \mbox{PADMM-FG} refers to the proximal ADMM with the Fortin–Glowinski relaxation step.
In the `Convexity' column, C (resp., SC) means that the total cost function is assumed to be convex (resp., strongly convex). 
In the `Rate' column, $\mathit{O}(\sigma^k)$ with $0 < \sigma < 1$ and $\mathit{O}(e^{-\alpha k})$ with $\alpha > 0$ both denote linear convergence, while $\mathit{O}(\sigma^k)$(local) indicates local linear convergence.
$N$ denotes the number of blocks of objective functions and $k$ denotes the number of iterations.
}\label{conv_result_variant}
\end{sidewaystable}

\subsection{Related works}\label{subsec-1}
The Alternating Direction Method of Multipliers (ADMM) \cite{A_dual,Sur_l} is an efficient approach that facilitates the solution of structured optimization problems by decomposing them into a sequence of simpler subproblems.
Building on the classical Douglas–Rachford operator splitting method \cite{On_the}, Glowinski and Marrocco \cite{Sur_l}, along with Gabay and Mercier \cite{A_dual}, introduced the well-known Alternating Direction Method of Multipliers (ADMM) with a Gauss–Seidel-type decomposition for solving \eqref{eq-1-19} in the case of $N=2$.
It employs the augmented Lagrangian for \eqref{eq-1-19}:
\begin{equation*}
    \mathcal{L}_{\rho}(x_1, \cdots, x_N; \lambda) := \sum_{i=1}^N f_i(x_i) - \left<\lambda, \sum_{i=1}^N A_ix_i - c \right> + \frac{\rho}{2}\left\|\sum_{i=1}^N A_ix_i -c\right\|^2
\end{equation*}
which incorporates a quadratic penalty term of the constraints (with a penalty parameter $\rho > 0$) into the Lagrangian. 
At each iteration, the augmented Lagrangian is sequentially minimized with respect to the primal variables $x_1$ and $x_2$, followed by an update of the dual variable $\lambda$.
The ADMM has been extensively investigated for solving two-block convex minimization problems (i.e., $N=2$). 
Its global convergence in the case of $N=2$ has been established in \cite{On_the_Douglas,Chapter_ix}. 
Han and Yuan \cite{Local_linear} showed that original ADMM achieves a local linear convergence rate for quadratic programs by using an error bound established in \cite{Error_bound}.
Based on this error bound framework, Yang and Han \cite{Linear_convergence_of_the} proved that ADMM attains global linear convergence for convex problems with polyhedral constraints by leveraging a global error bound.
Yuan, Zeng, and Zhang \cite{Discerning_the} showed that original ADMM also enjoys global linear convergence under the structured polyhedrality assumption by employing variational analysis techniques.
These scenarios are summarized in Table~\ref{conv_result}.
Additionally, various studies have explored the convergence rate properties of original ADMM when $N=2$ (see, e.g., \cite{Splitting_algorithms,Local_linear_linear,On_the_O(1/n),Iteration-complexity_of,On_non-ergodic}).
In addition to these results on the original ADMM, convergence of several variants for the case of $N=2$ has also been studied, such as the Proximal ADMM \cite{Partial_error}, Proximal ADMM with the Fortin–Glowinski relaxation step (PADMM-FG) \cite{On_the_global, Discerning_the}, Relaxed ADMM \cite{metric_selection, Faster_convergence}, Over-Relaxed ADMM \cite{A_general}, and Linearized ADMM \cite{Discerning_the}.
The convergence results of these variants are summarized in Table 2.

While the convergence results of ADMM is well established for the case $N=2$, its behavior for $N\geq 3$ has long remained unresolved; particularly, Chen et al. \cite{The_direct} provided a concrete example in which Gauss-Seidel ADMM diverges for $N=3$ for any penalty parameter $\rho$.
Building on this, we first discuss studies that establishes the convergence conditions for ADMM using the Gauss-Seidel scheme when $N \geq 3$, where the Gauss-Seidel method updates variables sequentially.
We refer to the works \cite{A_convergent}, \cite{convergence_analysis}, and \cite{Global_convergence} that achieved global and sublinear convergence results of the 3-block ADMM.
Han and Yuan \cite{A_note} established the global convergence of ADMM under the assumption that $f_1, \cdots, f_N$ are all strongly convex.
Building on this, both Chen, Shen, and You \cite{convergence_analysis} and Lin, Ma, and Zhang \cite{On_the_sublinear} relax the assumptions in \cite{A_note}.
In particular, \cite{convergence_analysis}, for $N=3$, allows one objective function to be not necessarily strongly convex and also relax the range of the penalty parameter, and propose a relaxed ADMM that incorporates an additional computation of optimal step size and prove its global convergence.
Lin, Ma, and Zhang \cite{On_the_sublinear} demonstrate sublinear convergence with rate $\textit{O}(1/k)$ in the ergodic sense and $\textit{o}(1/k)$ in the non-ergodic sense when one objective function is convex and the remaining $N-1$ are strongly convex, provided that the penalty parameter lies within a certain range.
Lin, Ma, and Zhang \cite{On_the_global_linear} proved the global linear convergence of Gauss-Seidel ADMM under the three scenarios, which are detailed in Table~\ref{conv_result}.
The global R-linear convergence of Gauss-Seidel ADMM has been established by Hong and Luo \cite{On_the_linear}; the corresponding condition is summarized in Table~\ref{conv_result}.

Alternatively, Jacobi updates in ADMM have been extensively studied, offering parallelization advantages for multi-block optimization problems.
The linear convergence of Proximal ADMM and modified Jacobi ADMM, as well as that of Gauss-Seidel ADMM, which was mentioned earlier, is established under several assumptions in~\cite{On_the_linear}.
Deng et al. \cite{Parallel_multi-block} showed that the Jacobi ADMM and Jacobi-Proximal ADMM achieve $\mathit{o}(1/k)$ convergence rate under the conditions summarized in Table~\ref{conv_result}.
He, Hou, and Yuan~\cite{On_full} proposed a variant of multi-block Jacobi ADMM by incorporating an under-relaxation step into the full Jacobian decomposition framework, achieving a worst-case convergence rate of $O(1/k)$ in both ergodic and non-ergodic senses.

\subsection{Contribution}\label{subsec-2}
The main contribution in this paper are as follows. We establish a global linear convergence of Jacobi-Proximal ADMM(Algorithm~\ref{algo}) under the assumption that the objective functions $f_i$ are strictly convex and $L_i$-smooth.
Our numerical results on the LCQP model and the optimal resource allocation problem support the linear convergence result of this work.

\subsection{Organization}\label{subsec-3}
We organize the rest of this paper as follows. 
In Section~\ref{sec-2}, we state the assumptions and the main result of this paper. 
Section~\ref{sec-3} is devoted to prove the main result.
In Section~\ref{sec-4}, we present numerical experiments supporting the linear convergence result.

\section{Statement of the main result}\label{sec-2}
In this section, we state the main result of this paper concerning the linear convergence of the Jacobi-Proximal ADMM described in Algorithm \ref{algo}. 
Since both of the objective function and constraints of \eqref{eq-1-19} are sums of terms on individual $x_i$'s, the problem can be decomposed into $N$ smaller subproblems, which is solved in a parallel and distributed manner.

Related to the problem \eqref{eq-1-19}, we first define the following augmented Lagrangian
\begin{equation*}
\mathcal{L}_{\rho}(x_1, \cdots, x_N; \lambda) := \sum_{i=1}^N f_i(x_i) - \left<\lambda, \sum_{i=1}^N A_ix_i - b \right> + \frac{\rho}{2}\left\|\sum_{i=1}^N A_ix_i -b\right\|^2
\end{equation*}
where $\lambda \in \mathbb{R}^m$ is the dual variable and parameter $\rho > 0$. 

Here we let $n = \sum_{i=1}^N n_i$ and set $A \in \mathbb{R}^{m \times n}$ by $A = \big[A_1 \cdots A_N\big]$. 
For $k \geq 0$, we denote by $\left( x_1^k, \cdots, x_N^k; \lambda^k \right)$ the variable generated by Algorithm \ref{algo}.
Also, we set $x^k \in \mathbb{R}^n$ for $k \geq 0$ and $x^* \in \mathbb{R}^n$ as $x^k = \big[x_1^k;\cdots ;x_N^k\big]$ and $x^* = \big[x_1^*; \cdots;x_N^*\big]$. 
With this notation, we have 
\begin{equation*}
Ax^k = \sum_{i=1}^N A_i x_i^k.
\end{equation*}
We use $\|z\|$ to denote the standard Euclidean norm of $z$, and $\langle \cdot, \cdot\rangle$ to denote the standard inner product. 
For a positive definite matrix $M \in \mathbb{R}^{l \times l}$, we define the $M$-norm as $\|z\|_M := \sqrt{z^TMz},\quad \forall z \in \mathbb{R}^l$.
If the matrix $M$ is positive semi-definite, then $\|\cdot\|_M$ is a semi-norm.
We state assumptions on the cost functions which will be used throughout the paper.
\begin{ass}\label{ass-1}
The functions $f_i: \mathbb{R}^{n_i} \rightarrow \left(-\infty,+\infty \right]$ are closed proper convex for $1\leq i\leq N$.
\end{ass}
We assume that there exists a minimizer of problem \eqref{eq-1-19} satisfying the KKT condition as follows.
\begin{ass}\label{ass-2}
We assume that $x^*= (x_1^*, \cdots, x_N^*) \in \mathbb{R}^{n}$ is a minimizer of problem \eqref{eq-1-19} companied with a multiplier $\lambda^* \in \mathbb{R}^{m}$ satisfying the following KKT condition 
\begin{align}\label{eq-1-10}
\nabla f_i\left(x_i^*\right) = A_i^T \lambda^* \quad \textrm{for}~ i=1,\cdots, N,
\\\label{eq-1-11}
Ax^* = \sum_{i=1}^N A_i x_i^* = c.
\end{align}
\end{ass}

\begin{ass}\label{ass-3}
There are positive constants $L_{i}$ such that for all $x,y \in \mathbf{R}^{n_i}$,
\begin{equation*}
\left\|\nabla f_i(x) - \nabla f_i(y) \right\| \leq L_i \left\| x-y\right\| \textrm{ for } 1\leq i \leq N.
\end{equation*}
\end{ass}
    
\begin{ass}\label{ass-4}
The function $f_i: \mathbb{R}^{n_i} \to (-\infty, +\infty ]$ is $\alpha$-strongly convex for some $\alpha >0$, $i.e.$,
\begin{equation*}
f_i(y) \geq f_i(x) + \left\langle y-x , \nabla f_i(x)\right\rangle + \alpha \left\| x-y\right\|^2
\end{equation*}
for all $x, y \in \mathbb{R}^{n_i}$.
\end{ass}

\begin{ass}\label{ass-5}We assume that the matrix $\big[A_1^T;~ A_2^T;~ \cdots ;~A_N^T \big] \in \mathbb{R}^{n \times m}$ with $n = \sum_{i=1}^N n_i$ has full column rank.
\end{ass}

We state the main result of this paper.
\begin{thm}\label{thm-main} Suppose Assumptions \ref{ass-1}-\ref{ass-5} hold and assume that $P_i$ is a positive semi-definite matrix for each $1\leq i \leq N$. 
Choose any $s >0$ such that
\begin{equation*}
s \Big[ \rho^2 D \|A_i \| + \frac{L}{N}\Big] < \frac{\alpha}{2N}, \qquad \forall 1 \leq i \leq N,
\end{equation*}  
and suppose that  $0<\gamma<2$, $\rho >0$  are chosen so that 
there exist values $\xi_i > 0$ such that
\begin{equation}\label{eq-2-2}
\left\{ \begin{array}{ll}
\rho A_i^TA_i + P_i -8s (\rho A_i^TA_i + P_i)^T(\rho A_i^TA_i + P_i) - \frac{\rho}{\xi_i}A_i^TA_i \succ 0,
 \\
\sum_{i=1}^N \xi_i < 2- \gamma. \end{array}\right.
\end{equation}
Define $\sigma = \max \left\{ {1-2\gamma\rho sc_A^2, \mu_s}\right\} \in (0,1)$ for any $c_A \in \Big(0, \frac{1}{\sqrt{2\gamma \rho s}}\Big)$, and $\mu_s \in (0,1)$ satisfying 
\begin{equation*}
\left\{  
\begin{aligned}
        &c_A^2 I_m \preceq \sum_{i=1}^N A_i A_i^T, 
        \\
        &(\rho + 4N s \rho^2D)A_i^TA_i + P_i \preceq \mu_s (\rho A_i^T A_i + P_i + 2(\alpha - Ls)I_{n_i}.
\end{aligned}
\right.
\end{equation*}
Then we have
\begin{equation}\label{eq-2-1}
\phi\big(x^{k+1}, \lambda^{k+1}\big) \leq \sigma \phi\big(x^k, \lambda^{k}\big),
\end{equation}
where $\phi: \mathbb{R}^n \times \mathbb{R}^m \to \mathbb{R}$ is defined as 
\begin{equation*}
    \phi \big(x^k, \lambda^k\big) = \frac{1}{2\gamma \rho} \big\|\lambda^k - \lambda^* \big\|^2 +  \frac{1}{2}   \sum_{i=1}^N \big\|x_i^{k} - x_i^{*} \big\|_{\rho A_i^TA_i + P_i +2 (\alpha -2Ls) I_{n_i}}^2.
\end{equation*}
As a result of \eqref{eq-2-1}, we have the following linear convergence result
\begin{equation*}
\phi \big(x^k, \lambda^k\big) = O(\sigma^k).
\end{equation*}
\end{thm}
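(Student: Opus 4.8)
The plan is to treat $\phi$ as a Lyapunov (energy) function and to prove the one-step contraction \eqref{eq-2-1} directly; the bound $\phi(x^k,\lambda^k)=O(\sigma^k)$ then follows by iterating \eqref{eq-2-1}. Throughout I work with the error variables $\tilde x_i^k := x_i^k-x_i^*$, $\tilde\lambda^k:=\lambda^k-\lambda^*$ and the increments $d_i^k:=x_i^{k+1}-x_i^k$. First I would write the first-order optimality condition for the subproblem \eqref{eq-1-5}, namely $\nabla f_i(x_i^{k+1})+\rho A_i^T(A_ix_i^{k+1}+\sum_{j\neq i}A_jx_j^k-c)-A_i^T\lambda^k+P_id_i^k=0$, and subtract the KKT identity $\nabla f_i(x_i^*)=A_i^T\lambda^*$ from Assumption \ref{ass-2}. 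Using $c=\sum_jA_jx_j^*$ this gives, for each $i$,
\[
\nabla f_i(x_i^{k+1})-\nabla f_i(x_i^*)=A_i^T\tilde\lambda^k-\rho A_i^Tr_i^{k+1}-P_id_i^k,\qquad r_i^{k+1}:=A_i\tilde x_i^{k+1}+\sum_{j\neq i}A_j\tilde x_j^k,
\]
and I would rewrite the Jacobi residual as $r_i^{k+1}=A\tilde x^{k+1}-\sum_{j\neq i}A_jd_j^k$ to expose the full residual $A\tilde x^{k+1}=\sum_jA_j\tilde x_j^{k+1}$, which is what the dual update actually sees.

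Next I would expand the dual part of $\phi$. Since \eqref{eq-1-6} gives $\tilde\lambda^{k+1}=\tilde\lambda^k-\gamma\rho A\tilde x^{k+1}$, we have $\frac{1}{2\gamma\rho}\|\tilde\lambda^{k+1}\|^2=\frac{1}{2\gamma\rho}\|\tilde\lambda^k\|^2-\langle\tilde\lambda^k,A\tilde x^{k+1}\rangle+\frac{\gamma\rho}{2}\|A\tilde x^{k+1}\|^2$. I would then take the inner product of the error-form optimality condition with $\tilde x_i^{k+1}$ and sum over $i$. On the left, the $2\alpha$-strong monotonicity implied by Assumption \ref{ass-4} contributes $2\alpha\sum_i\|\tilde x_i^{k+1}\|^2$, while the right produces the matching cross term $\sum_i\langle A_i^T\tilde\lambda^k,\tilde x_i^{k+1}\rangle=\langle\tilde\lambda^k,A\tilde x^{k+1}\rangle$, which cancels against the one coming from the dual expansion. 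This cancellation is the mechanism that couples the primal descent to the dual energy, and is the reason the constraint residual appears symmetrically in $\phi$.

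The main work lies in controlling the coupling and proximal terms. The quadratic contribution $\rho\sum_i\langle A_i^Tr_i^{k+1},\tilde x_i^{k+1}\rangle$ splits, via $r_i^{k+1}=A\tilde x^{k+1}-\sum_{j\neq i}A_jd_j^k$, into a clean weighted part $\sum_i\|\tilde x_i^{k+1}\|^2_{\rho A_i^TA_i}$ plus Jacobi cross terms $\rho\langle A_i^TA_jd_j^k,\tilde x_i^{k+1}\rangle$ ($j\neq i$) and proximal terms $\langle P_id_i^k,\tilde x_i^{k+1}\rangle$; these increment couplings are exactly what obstruct a naive telescoping. I would first derive the increment bound $\|d_i^k\|\lesssim\|\tilde x_i^{k+1}\|+\|\tilde x_i^k\|$ from the subproblem optimality together with the $L_i$-smoothness of Assumption \ref{ass-3}, and then absorb the cross and proximal contributions by Young's inequality weighted against a spare fraction of the strong-convexity reserve quantified by the analysis parameter $s$. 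This is precisely where the semidefinite condition \eqref{eq-2-2}, the smallness requirement $s[\rho^2D\|A_i\|+L/N]<\alpha/(2N)$, and the coupling constant $D$ enter, ensuring that after reorganization the surviving primal quadratic form is dominated blockwise by $\mu_s\,(\rho A_i^TA_i+P_i+2(\alpha-Ls)I_{n_i})$. In parallel, I would invoke Assumption \ref{ass-5}: because $[A_1^T;\cdots;A_N^T]$ has full column rank, $\|\tilde\lambda^k\|^2\leq c_A^{-2}\sum_i\|A_i^T\tilde\lambda^k\|^2$ with $c_A^2I_m\preceq\sum_iA_iA_i^T$, and estimating each $\|A_i^T\tilde\lambda^k\|$ through the optimality identity (gradient difference bounded by smoothness, residual and proximal parts by the increment bounds) converts a reserved $s$-portion of the primal descent into a genuine decrease $-2\gamma\rho sc_A^2\|\tilde\lambda^k\|^2$ of the dual energy.

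Finally, assembling the estimates, the $x$-block terms collapse into the weighted norms $\|\tilde x_i^k\|^2_{\rho A_i^TA_i+P_i+2(\alpha-2Ls)I_{n_i}}$ defining $\phi$ (the gap between the $-Ls$ and $-2Ls$ weights being the amount of smoothness spent on the Young splits), and the two contraction factors combine: the $\mu_s$-inequality governs the primal energy and the factor $1-2\gamma\rho sc_A^2$ governs the dual energy, so that $\phi(x^{k+1},\lambda^{k+1})\leq\max\{1-2\gamma\rho sc_A^2,\mu_s\}\,\phi(x^k,\lambda^k)=\sigma\,\phi(x^k,\lambda^k)$; iterating yields $\phi(x^k,\lambda^k)\leq\sigma^k\phi(x^0,\lambda^0)=O(\sigma^k)$. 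I expect the decisive obstacle to be the coupling/proximal step: orchestrating the Young-inequality splits so that the single budget $s$ simultaneously dominates all Jacobi cross terms across the $N$ blocks without over-spending the strong-convexity reserve, yet still leaves enough to drive the dual contraction, all while matching the exact weights in $\phi$ and respecting the positive-definiteness conditions \eqref{eq-2-2}.
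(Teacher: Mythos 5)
Your overall skeleton matches the paper's proof: both treat $\phi$ as a Lyapunov function, both start from the subproblem optimality condition \eqref{eq-1-7} minus the KKT system to extract the strong-convexity gain $\alpha\sum_i\|x_i^{k+1}-x_i^*\|^2$, both use the full-column-rank bound $\sum_i\|A_i^T\lambda\|^2\geq c_A^2\|\lambda\|^2$ (the paper's Lemma \ref{lem-2}, deployed in Lemma \ref{lem-1} and Proposition \ref{prop-3-4}) to convert an $s$-fraction of that gain into the dual factor $1-2\gamma\rho sc_A^2$, and both finish with $\sigma=\max\{1-2\gamma\rho sc_A^2,\mu_s\}$. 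The gap lies in the one step you yourself flag as decisive: the Jacobi coupling terms. There is first an algebraic error: with $r_i^{k+1}=A\tilde x^{k+1}-\sum_{j\neq i}A_jd_j^k$, the ``clean part'' of $\rho\sum_i\langle A_i^Tr_i^{k+1},\tilde x_i^{k+1}\rangle$ is $\rho\|A\tilde x^{k+1}\|^2$, not the blockwise sum $\sum_i\|\tilde x_i^{k+1}\|^2_{\rho A_i^TA_i}$; the difference consists of sign-indefinite error--error cross terms $\rho\sum_{i\neq j}\langle A_i\tilde x_i^{k+1},A_j\tilde x_j^{k+1}\rangle$, and the full residual $\rho\|A\tilde x^{k+1}\|^2$ is exactly what must offset the $+\frac{\gamma\rho}{2}\|A\tilde x^{k+1}\|^2$ created by your dual expansion (this is where $\gamma<2$ enters), so the misidentification is not cosmetic.

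More seriously, your absorption strategy --- bound $\|d_i^k\|\lesssim\|\tilde x_i^{k+1}\|+\|\tilde x_i^k\|$ and charge the Jacobi/proximal couplings, via Young's inequality, to ``a spare fraction of the strong-convexity reserve'' --- cannot work under the stated hypotheses. The parameter $s$ is a free analysis parameter that may be taken arbitrarily small, and neither the smallness condition on $s$ nor \eqref{eq-2-2} imposes any lower bound on $\alpha$ in terms of $\rho$, $N$, or $\|A_i^TA_j\|$; Young splits of the $N(N-1)$ coupling terms paid out of the strong-convexity budget would force a relation of the type $\alpha\gtrsim\rho N\max_i\|A_i\|^2$, which the theorem does not assume. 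The paper's proof never converts increments into errors and never lets the coupling touch $\alpha$: in Lemma \ref{lem-3-2} the diagonal-plus-proximal part is telescoped by polarization, producing the nonnegative increment norms $\frac{1}{2}\sum_i\|x_i^{k+1}-x_i^k\|^2_{\rho A_i^TA_i+P_i}$, while the global part is rewritten through the dual update \eqref{eq-1-6} (i.e.\ $A(x^{k+1}-x^*)=\frac{1}{\gamma\rho}(\lambda^k-\lambda^{k+1})$) as the cross term $\frac{1}{\gamma}\langle\lambda^k-\lambda^{k+1},A(x^k-x^{k+1})\rangle$ accompanied by $\frac{2-\gamma}{2\gamma^2\rho}\|\lambda^{k+1}-\lambda^k\|^2$. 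Lemma \ref{lem-3-6} then dominates that cross term by Young's inequality against these two increment quantities, and this is precisely what consumes $\sum_i\xi_i<2-\gamma$ and the matrix condition in \eqref{eq-2-2}; strong convexity is spent solely on the dual contraction in Lemma \ref{lem-1}. To repair your argument you would have to re-route the coupling through this dual-increment identity rather than through $\alpha$ --- at which point you would be reproducing the paper's Lemmas \ref{lem-3-2} and \ref{lem-3-6}.
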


\begin{rem}
Choosing $\xi_i < \frac{2-\gamma}{N}$ leads to a simplification of the convergence condition \eqref{eq-2-2}, resulting in the following form: 
\begin{equation}\label{eq-2-3}
\rho A_i^TA_i + P_i -8s (\rho A_i^TA_i + P_i)^T(\rho A_i^TA_i + P_i) - \frac{\rho N}{2-\gamma}A_i^TA_i \succ 0.
\end{equation}
In the case of $P_i = \tau_i \mathbf{I}$ (\textit{standard proximal method}), the condition \eqref{eq-2-3} further reduces to
\begin{equation*}
\rho\|A_i\|^2 + \tau_i -8s (\rho \|A_i\|^2 + \tau_i)^2 - \frac{\rho N}{2-\gamma}\|A_i\|^2 > 0.
\end{equation*}
Similarly, for $P_i = \tau_i \mathbf{I} - \rho A_i^TA_i$ (\textit{prox-linear method}), the condition \eqref{eq-2-3} simplifies to
\begin{equation*}
\tau_i - 8s\tau_i^2 - \frac{\rho N}{2-\gamma}\|A_i\|^2 > 0.
\end{equation*}
\end{rem}

\section{Proof of the main result}\label{sec-3}
In this section, we give the proof of Theorem \ref{thm-main}. For this, we prepare several lemmas. 

\begin{lem}\label{lem-2}
Suppose Assumption \ref{ass-5} holds.
Consider matrices $A_i \in \mathbb{R}^{m \times n_i}$ for $1 \leq i \leq N$.
Then there exists $c_A>0$ such that 
\begin{equation}\label{eq-1-14}
\bigg(\sum_{i=1}^N\|A_i^T \lambda\|^2\bigg)^{\frac{1}{2}} \geq c_A\|\lambda \|
\end{equation}
for all $\lambda \in \mathbb{R}^{m}$.
\end{lem}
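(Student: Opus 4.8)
The plan is to recognize that the sum of squared norms on the left-hand side of \eqref{eq-1-14} is nothing but the squared norm of a single stacked vector, and then to convert the full-column-rank hypothesis of Assumption~\ref{ass-5} into a coercivity (injectivity) estimate. Concretely, since $A = \big[A_1 \cdots A_N\big]$, its transpose is the vertical stack $A^T = \big[A_1^T;~ A_2^T;~ \cdots;~ A_N^T\big] \in \mathbb{R}^{n \times m}$, and for any $\lambda \in \mathbb{R}^m$ one has $A^T \lambda = \big[A_1^T\lambda;~ \cdots;~ A_N^T\lambda\big]$. Reading off the block structure of this vector gives
\begin{equation*}
\big\|A^T \lambda\big\|^2 = \sum_{i=1}^N \big\|A_i^T \lambda\big\|^2,
\end{equation*}
so the claimed inequality \eqref{eq-1-14} is equivalent to the single bound $\|A^T \lambda\| \geq c_A \|\lambda\|$ for all $\lambda \in \mathbb{R}^m$.

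The second step is to extract the constant $c_A$ from Assumption~\ref{ass-5}. The point to flag carefully is the direction of the rank condition: the matrix $A^T$ has $m$ columns, and full column rank means precisely that $\lambda \mapsto A^T \lambda$ is injective, i.e.\ $A^T \lambda = 0$ forces $\lambda = 0$. I would then write $\|A^T \lambda\|^2 = \lambda^T (A A^T) \lambda$ and note that the symmetric positive semi-definite matrix $A A^T \in \mathbb{R}^{m \times m}$ is in fact positive definite, because its kernel consists exactly of those $\lambda$ with $\|A^T\lambda\| = 0$, which by injectivity is $\{0\}$. Hence its smallest eigenvalue $\lambda_{\min}(A A^T)$ is strictly positive, and taking $c_A := \sqrt{\lambda_{\min}(A A^T)} > 0$ yields
\begin{equation*}
\sum_{i=1}^N \big\|A_i^T \lambda\big\|^2 = \lambda^T (A A^T) \lambda \geq \lambda_{\min}(A A^T)\,\|\lambda\|^2 = c_A^2 \|\lambda\|^2,
\end{equation*}
and \eqref{eq-1-14} follows by taking square roots. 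As an alternative phrasing that avoids eigenvalues, one could argue by compactness: the map $\lambda \mapsto \big(\sum_{i} \|A_i^T\lambda\|^2\big)^{1/2}$ is continuous and, by injectivity, strictly positive on the compact unit sphere $\{\|\lambda\|=1\}$, so it attains a positive minimum $c_A$ there, and the general inequality follows by homogeneity in $\lambda$.

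I do not anticipate a genuine obstacle in this lemma, since it is an elementary coercivity statement; the only place demanding care is bookkeeping, namely confirming that the vertical stacking identity holds with the indexing $A = \big[A_1 \cdots A_N\big]$ and that ``full column rank of the stacked $A^T$'' is the hypothesis that makes $A A^T$ invertible rather than $A^T A$. I would therefore keep the write-up short, emphasizing the identification $\sum_i \|A_i^T\lambda\|^2 = \|A^T\lambda\|^2$ and the positivity of $\lambda_{\min}(A A^T)$, since this constant $c_A$ is exactly the quantity invoked later in the hypotheses of Theorem~\ref{thm-main} through the spectral condition $c_A^2 I_m \preceq \sum_{i=1}^N A_i A_i^T$.
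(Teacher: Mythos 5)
Your proposal is correct, and it actually contains two arguments: your primary spectral argument and, as an aside, the compactness argument, the latter being precisely the paper's own proof. The paper defines $c_A$ as the minimum of $\big(\sum_i \|A_i^T\lambda\|^2\big)^{1/2}/\|\lambda\|$ over the unit sphere $S^{m-1}$, and then argues by contradiction: if the minimum were zero, compactness would give a minimizer $\lambda_0 \in S^{m-1}$ with $A_i^T\lambda_0 = 0$ for all $i$, which the full-column-rank hypothesis forces to be zero, contradicting $\|\lambda_0\| = 1$. Your main route instead identifies $\sum_i \|A_i^T\lambda\|^2 = \|A^T\lambda\|^2 = \lambda^T(AA^T)\lambda$ and takes $c_A = \sqrt{\lambda_{\min}(AA^T)}$, using injectivity of $A^T$ to conclude $AA^T \succ 0$. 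The two proofs are of comparable length and rely on the same injectivity fact, but your spectral version buys something the paper's does not: an explicit formula for the constant, $c_A^2 = \lambda_{\min}\big(\sum_{i=1}^N A_iA_i^T\big)$, which identifies the largest admissible $c_A$ in the condition $c_A^2 I_m \preceq \sum_{i=1}^N A_iA_i^T$ appearing in the hypotheses of Theorem \ref{thm-main}, rather than merely asserting existence. Conversely, the paper's compactness phrasing avoids invoking eigenvalue theory, though for a symmetric matrix this is a negligible saving; in fact the paper's minimum and your $c_A$ are the same number.
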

\begin{proof}
We define $c_A \geq 0$ by
\begin{equation*}
c_A = \min_{\lambda \in \mathbb{R}^m \backslash \{0\}} \frac{\Big(\sum_{i=1}^N\|A_i^T \lambda\|^2\Big)^{\frac{1}{2}}}{\|\lambda\|} = \min_{\lambda \in S^{m-1}} \frac{\Big(\sum_{i=1}^N\|A_i^T \lambda\|^2\Big)^{\frac{1}{2}}}{\|\lambda\|}.
\end{equation*}
In order to prove that $c_A > 0$, we assume $c_A = 0$ and proceed to derive a contradiction.
Since $S^{m-1}$ is compact, there exists a minimizer $\lambda_0 \in S^{m-1}$ of \eqref{eq-1-14} such that
\begin{equation*}
\frac{\Big(\sum_{i=1}^N \big\|A_i^T\lambda_0\big\|^2\Big)^{\frac{1}{2}}}{\|\lambda_0\|}=0.
\end{equation*}
Thus we have $A_i^T \lambda_0 = 0$ for all $1 \leq i \leq N$, and this leads to $\lambda_0 = 0$ since $\big[A_1^T; ~\cdots; ~ A_N^T \big]$ has full column rank. It contradicts to the fact that $\lambda_0 \in S^{m-1}$. Therefore, we have $c_A > 0$, and the lemma is proved.
\end{proof}

\begin{rem}
The assumption of Lemma \ref{lem-2} means that the matrix $\big[A_1^T;~ A_2^T;~ \cdots ;~A_N^T \big] \in \mathbb{R}^{n \times m}$ with $n = n_1+ \cdots + n_N$ has only a trivial kernel, i.e., $\ker\left( \big[A_1^T;~ A_2^T;~ \cdots ;~A_N^T \big] \right) = \{\mathbf{0}\}$.
\end{rem}

By the strongly convexity of the objective function, we have the following lemma.
\begin{lem}\label{lem-3-2}
Suppose Assumptions \ref{ass-1}, \ref{ass-2}, and \ref{ass-4} hold. Then it holds that 
\begin{equation}\label{eq-1-20}
\begin{split}
&\frac{1}{2\gamma \rho}\big\|\lambda^k - \lambda^*\big\|^2 + \frac{1}{2}\sum_{i=1}^N \big\|x_i^{k}-x_i^*\big\|_{\rho A_i^TA_i+P_i}^2
\\
&\geq \alpha \sum_{i=1}^N \big\|x_i^{k+1}-x_i^*\big\|^2 + \frac{1}{2\gamma \rho}\big\|\lambda^{k+1} - \lambda^*\big\|^2+\frac{1}{2}\sum_{i=1}^N \big\|x_i^{k+1}-x_i^*\big\|_{\rho A_i^TA_i+P_i}^2 + H_k
\end{split}
\end{equation} 
where $H_k := \frac{1}{2}\sum_{i=1}^N \big\|x_i^{k+1}-x_i^k\big\|_{\rho A_i^TA_i+P_i}^2 + \frac{2-\gamma}{2\gamma^2\rho}\big\|\lambda^{k+1} - \lambda^{k}\big\|^2 + \frac{1}{\gamma} \Big\langle \lambda^k - \lambda^{k+1}, A\big(x^k - x^{k+1}\big)\Big\rangle$.
\end{lem}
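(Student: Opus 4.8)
The plan is to derive \eqref{eq-1-20} from the first-order optimality conditions of the parallel $x_i$-subproblems, to center everything at the KKT point $(x^*,\lambda^*)$ using Assumption \ref{ass-2}, and then to reorganize the resulting terms into a telescoping Lyapunov difference plus the remainder $H_k$ via the polarization identity and the dual update \eqref{eq-1-6}. Concretely, the minimizer in \eqref{eq-1-5} satisfies
\begin{equation*}
\nabla f_i(x_i^{k+1}) + \rho A_i^T\Big(A_i x_i^{k+1} + \sum_{j\neq i}A_j x_j^k - c\Big) - A_i^T\lambda^k + P_i(x_i^{k+1}-x_i^k) = 0.
\end{equation*}
Using $c = \sum_{j}A_j x_j^*$ from \eqref{eq-1-11}, I would rewrite the constraint residual as $A_i(x_i^{k+1}-x_i^*) + \sum_{j\neq i}A_j(x_j^k-x_j^*)$, which isolates a diagonal term $\rho A_i^TA_i(x_i^{k+1}-x_i^*)$ alongside the Jacobi off-diagonal couplings.

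Next I would take the inner product of each optimality condition with $x_i^{k+1}-x_i^*$ and sum over $i$. For the gradient term I would use Assumptions \ref{ass-1} and \ref{ass-4} together with the KKT identity $\nabla f_i(x_i^*) = A_i^T\lambda^*$ to obtain $\sum_i\langle\nabla f_i(x_i^{k+1}),x_i^{k+1}-x_i^*\rangle \ge \alpha\sum_i\|x_i^{k+1}-x_i^*\|^2 + \langle\lambda^*, A(x^{k+1}-x^*)\rangle$ (strong convexity as stated in fact yields $2\alpha$, of which $\alpha$ suffices). The crucial manipulation is the diagonal–off-diagonal recombination: writing $\rho\|x_i^{k+1}-x_i^*\|_{A_i^TA_i}^2 = \langle\rho A_i^TA_i(x_i^{k+1}-x_i^k),x_i^{k+1}-x_i^*\rangle + \rho\langle A_i(x_i^k-x_i^*),A_i(x_i^{k+1}-x_i^*)\rangle$ and adding the last summand to the off-diagonal cross terms completes the sum over all $j$, producing the single clean quantity $\rho\langle A(x^k-x^*),A(x^{k+1}-x^*)\rangle$. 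The remaining summand, together with the $P_i$-term $\langle P_i(x_i^{k+1}-x_i^k),x_i^{k+1}-x_i^*\rangle$, is then handled by the polarization identity $\langle M(a-b),a-c\rangle = \tfrac{1}{2}\|a-c\|_M^2 + \tfrac{1}{2}\|a-b\|_M^2 - \tfrac{1}{2}\|b-c\|_M^2$ applied with $M = \rho A_i^TA_i + P_i$, which produces the telescoping pair $\tfrac{1}{2}\|x_i^{k+1}-x_i^*\|_{\rho A_i^TA_i+P_i}^2 - \tfrac{1}{2}\|x_i^k-x_i^*\|_{\rho A_i^TA_i+P_i}^2$ and the $H_k$-contribution $\tfrac{1}{2}\|x_i^{k+1}-x_i^k\|_{\rho A_i^TA_i+P_i}^2$.

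Finally I would invoke the dual update \eqref{eq-1-6}, which gives $A(x^{k+1}-x^*) = Ax^{k+1}-c = (\lambda^k-\lambda^{k+1})/(\gamma\rho)$. This converts the dual-coupling term $\langle\lambda^*-\lambda^k, A(x^{k+1}-x^*)\rangle$ into a polarization of $\|\lambda^{\bullet}-\lambda^*\|^2$, and — after splitting $A(x^k-x^*) = A(x^k-x^{k+1}) + A(x^{k+1}-x^*)$ — turns the leftover $\rho\langle A(x^k-x^*),A(x^{k+1}-x^*)\rangle$ into exactly $\tfrac{1}{\gamma}\langle\lambda^k-\lambda^{k+1}, A(x^k-x^{k+1})\rangle + \tfrac{1}{\gamma^2\rho}\|\lambda^k-\lambda^{k+1}\|^2$. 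Combining the latter $\|\lambda^k-\lambda^{k+1}\|^2$ with the $-\tfrac{1}{2\gamma\rho}\|\lambda^k-\lambda^{k+1}\|^2$ coming from the dual polarization yields the coefficient $\tfrac{2-\gamma}{2\gamma^2\rho}$, completing $H_k$. Rearranging the assembled inequality then gives \eqref{eq-1-20}.

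I expect the main obstacle to be the bookkeeping of the Jacobi off-diagonal terms: individually the couplings $\rho A_i^T\sum_{j\neq i}A_j(\cdot)$ do not telescope, and the whole argument hinges on recognizing that ``completing the diagonal'' merges them into the single global quantity $\rho\langle A(x^k-x^*),A(x^{k+1}-x^*)\rangle$, which is precisely what the dual update can absorb. Tracking the scalar coefficients through this recombination — so that the $\|\lambda^k-\lambda^{k+1}\|^2$ terms combine to $\tfrac{2-\gamma}{2\gamma^2\rho}$ and the cross term appears with the exact factor $1/\gamma$ — is the delicate part; once every term of \eqref{eq-1-20} is matched, the proof is complete.
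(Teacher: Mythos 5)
Your proposal is correct and follows essentially the same route as the paper's proof: first-order optimality of each $x_i$-subproblem, strong-convexity monotonicity combined with the KKT identity $\nabla f_i(x_i^*) = A_i^T\lambda^*$, substitution of the dual update $A(x^{k+1}-x^*) = (\lambda^k-\lambda^{k+1})/(\gamma\rho)$, and polarization identities producing the telescoping $M$-norm terms and $H_k$ (your coefficient checks, e.g.\ $\tfrac{1}{\gamma^2\rho}-\tfrac{1}{2\gamma\rho}=\tfrac{2-\gamma}{2\gamma^2\rho}$, are all accurate). The only difference is bookkeeping: you center the Jacobi residual at $x^*$ and ``complete the diagonal'' to form $\rho\langle A(x^k-x^*),A(x^{k+1}-x^*)\rangle$, whereas the paper centers at the new iterate via the auxiliary multiplier $\hat\lambda=\lambda^k-\rho(Ax^{k+1}-c)$ and consecutive-iterate differences $x_j^k-x_j^{k+1}$ --- both rearrangements yield the identical cross term $\tfrac{1}{\gamma}\langle\lambda^k-\lambda^{k+1},A(x^k-x^{k+1})\rangle$ and hence the same inequality.
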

\begin{proof}
By the optimality of \eqref{eq-1-5}, we have
\begin{equation}\label{eq-1-7}
\nabla f_i(x_i^{k+1}) = A_i^T \lambda^k - \rho A_i^T \bigg(A_ix_i^{k+1} + \sum_{j \neq i}A_jx_j^k-c\bigg) + P_i\big(x_i^k - x_i^{k+1}\big).
\end{equation}
Combining \eqref{eq-1-7} and \eqref{eq-1-10}, we get
\begin{equation}\label{eq-1-12}
\begin{split}
&A_i^T \big(\lambda^k - \lambda^* \big) 
\\
&= \nabla f_i\big(x_i^{k+1}\big) - \nabla f_i\left(x_i^*\right) + \rho A_i^T \bigg(A_ix_i^{k+1} + \sum_{j \neq i}A_jx_j^k-c \bigg) + P_i\big(x_i^{k+1} - x_i^k \big).
\end{split}
\end{equation}
Since each $f_i$ is $\alpha$-strongly convex, we have
\begin{equation}\label{eq-1-1}
\left\langle x_i^{k+1} - x_i^*, \nabla f_i\big(x_i^{k+1}\big) - \nabla f_i\big(x_i^*\big) \right\rangle \geq \alpha \big\|x_i^{k+1} - x_i^*\big\|^2.
\end{equation}
Inserting \eqref{eq-1-12} into \eqref{eq-1-1}, we have
\begin{equation*}
\begin{split}
\bigg\langle A_i\big(x_i^{k+1} - x_i^*\big), \lambda^k - \lambda^*- \rho \bigg(A_ix_i^{k+1} + \sum_{j \neq i}A_jx_j^k-c \bigg)\bigg\rangle 
\\
+ \left\langle x_i^{k+1} - x_i^*, P_i\big(x_i^k - x_i^{k+1}\big)\right\rangle \geq \alpha \big\|x_i^{k+1} - x_i^*\big\|^2.
\end{split}
\end{equation*}
We rewrite above inequality as follows
\begin{equation*}
\begin{split}
\bigg\langle A_i\big(x_i^{k+1} - x_i^*\big), \lambda^k - \lambda^*- \rho \bigg(\sum_{i=1}^N A_ix_i^{k+1} + \sum_{j \neq i}A_j\big(x_j^k-x_j^{k+1}\big)-c\bigg)\bigg\rangle 
\\
+ \left\langle x_i^{k+1} - x_i^*, P_i\big(x_i^k - x_i^{k+1}\big)\right\rangle \geq \alpha \big\|x_i^{k+1} - x_i^*\big\|^2.
\end{split}
\end{equation*}
For simplicity, we let $\hat{\lambda} = \lambda^k - \rho\left(\sum_{i=1}^N A_ix_i^{k+1}-c\right) = \lambda^k - \frac{1}{\gamma}\left(\lambda^k - \lambda^{k+1}\right)$. Then the above inequality reads as follows
\begin{equation*}
\begin{split}
\bigg\langle A_i\big(x_i^{k+1} - x_i^*\big), \hat{\lambda} - \lambda^*- \rho \Big(\sum_{j \neq i}A_j(x_j^k-x_j^{k+1})\Big)\bigg\rangle 
\\
+ \left\langle x_i^{k+1} - x_i^*, P_i\big(x_i^k - x_i^{k+1}\big)\right\rangle \geq \alpha \big\|x_i^{k+1} - x_i^*\big\|^2.
\end{split}
\end{equation*}
Using that $\sum_{j \neq i}A_j\big(x_j^k - x_j^{k+1}\big) = A\big(x^k - x^{k+1}\big) - A_i\big(x_i^k - x_i^{k+1}\big)$ and summing the above estimate for $1\leq i \leq N$ yields
\begin{equation}\label{eq-1-2}
\begin{split}
\left\langle A\big(x^{k+1} - x^*\big), \hat{\lambda}-\lambda^*\right\rangle+\sum_{i=1}^N\big(x_i^{k+1}-x_i^*\big)^T \left(\rho A_i^T A_i + P_i\right)\big(x_i^k - x_i^{k+1}\big) 
\\
\geq \alpha \sum_{i=1}^N \big \|x_i^{k+1}-x_i^*\big\|^2 + \rho \Big\langle A\big(x^{k+1} - x^*\big), A\big(x^k - x^{k+1}\big)\Big\rangle.
\end{split}
\end{equation}
Combining the update rule \eqref{eq-1-6} and \eqref{eq-1-11} and using that $\hat{\lambda} - \lambda^* = \left(\lambda^{k+1} - \lambda^*\right) + \frac{\gamma-1}{\gamma}\left(\lambda^k - \lambda^{k+1}\right)$ in the above inequality, we get 
\begin{equation}\label{eq-1-3}
\begin{split}
&\left\langle \frac{1}{\gamma \rho} \big(\lambda^k - \lambda^{k+1}\big), \lambda^{k+1} - \lambda^* +\frac{\gamma-1}{\gamma}\big(\lambda^k - \lambda^{k+1}\big)\right\rangle 
\\
& +\sum_{i=1}^N\big(x_i^{k+1}-x_i^*\big)^T \left(\rho A_i^T A_i + P_i\right)\big(x_i^k - x_i^{k+1}\big) 
\\
&\geq \alpha \sum_{i=1}^N \big\|x_i^{k+1}-x_i^*\big\|^2 + \frac{1}{\gamma} \left\langle \lambda^k - \lambda^{k+1}, A\big(x^k - x^{k+1}\big)\right\rangle.
\end{split}
\end{equation}
For the two terms in the left hand side, we have the following identities
\begin{equation*}
\begin{split}
&\left\langle \frac{1}{\gamma \rho} \big(\lambda^k - \lambda^{k+1}\big), \lambda^{k+1} - \lambda^* +\frac{\gamma-1}{\gamma}\big(\lambda^k - \lambda^{k+1}\big)\right\rangle 
\\
&= \frac{\gamma-2}{2\gamma^2\rho}\left\|\lambda^{k+1} - \lambda^{k}\right\|^2 - \frac{1}{2\gamma \rho}\left\|\lambda^{k+1}-\lambda^*\right\|^2 + \frac{1}{2\gamma \rho}\left\|\lambda^k - \lambda^*\right\|^2,
\end{split}
\end{equation*}
and
\begin{equation*}
\begin{split}
&\sum_{i=1}^N\big(x_i^{k+1}-x_i^*\big)^T \left(\rho A_i^T A_i + P_i\right)\big(x_i^k - x_i^{k+1}\big)
\\
&= -\frac{1}{2} \sum_{i=1}^N \Big( \big\|x_i^{k+1}-x_i^*\big\|_{\rho A_i^TA_i+P_i}^2+\big\|x_i^{k+1}-x_i^k\big\|_{\rho A_i^TA_i+P_i}^2 - \big\|x_i^{k}-x_i^*\big\|_{\rho A_i^TA_i+P_i}^2\Big).
\end{split}
\end{equation*}
Inserting the above two equalities in \eqref{eq-1-3}, we obtain
\begin{equation*}
\begin{split}
&\frac{1}{2\gamma \rho}\big\|\lambda^k - \lambda^*\big\|^2 + \frac{1}{2}\sum_{i=1}^N \big\|x_i^{k}-x_i^*\big\|_{\rho A_i^TA_i+P_i}^2
\\
&\geq \alpha \sum_{i=1}^N \big\|x_i^{k+1}-x_i^*\big\|^2 + \frac{1}{2\gamma \rho}\big\|\lambda^{k+1} - \lambda^*\big\|^2+\frac{1}{2}\sum_{i=1}^N \big\|x_i^{k+1}-x_i^*\big\|_{\rho A_i^TA_i+P_i}^2 + H_k
\end{split}
\end{equation*} 
where $H_k = \frac{1}{2}\sum_{i=1}^N \big\|x_i^{k+1}-x_i^k\big\|_{\rho A_i^TA_i+P_i}^2 + \frac{2-\gamma}{2\gamma^2\rho}\big\|\lambda^{k+1} - \lambda^{k}\big\|^2 + \frac{1}{\gamma} \Big\langle \lambda^k - \lambda^{k+1}, A\big(x^k - x^{k+1}\big)\Big\rangle$.
This is the desired inequality.
\end{proof}
We notice that the term $\alpha \sum_{i=1}^N \left\|x_i^{k+1} - x_i^*\right\|^2$ in the inequality \eqref{eq-1-20} is a gain for the strongly convexity of the costs. For the linear convergence, we make it to bound the distance $\left\| \lambda^k - \lambda^*\right\|^2$ between the multipliers $\lambda^k$ and $\lambda^*$ in the following lemma.

\begin{lem}\label{lem-1}
Suppose Assumption \ref{ass-1}-\ref{ass-5} hold.
\sloppy
Let $L = \max_{1\leq i \leq N}L_i^2$ and $D = \sum_{i=1}^N \left\| A_i^T\right\|^2$.
Then it holds that 
\begin{equation}\label{eq-3-3}
\begin{split}
&c_A^2\left\|\lambda^k-\lambda^* \right \|^2 
\\
& \leq   2L\sum_{i=1}^N\big\|x_i^{k+1}-x_i^*\big\|^2 + 4\rho^2D\bigg\|\sum_{j=1}^N A_j \big(x_j^k-x_j^*\big) \bigg\|^2 
\\
& \quad + 4\sum_{i=1}^N\bigg\|\Big(\rho A_i^TA_i + P_i \Big)\big(x_i^{k+1} - x_i^k \big) \bigg\|^2.
\end{split}
\end{equation}
\end{lem}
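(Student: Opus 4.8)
The plan is to bound the dual gap $\|\lambda^k - \lambda^*\|^2$ by invoking Lemma~\ref{lem-2} with the choice $\lambda = \lambda^k - \lambda^*$, which at once gives
\begin{equation*}
c_A^2 \big\|\lambda^k - \lambda^*\big\|^2 \leq \sum_{i=1}^N \big\|A_i^T(\lambda^k - \lambda^*)\big\|^2.
\end{equation*}
The entire task then reduces to estimating each summand $\|A_i^T(\lambda^k - \lambda^*)\|^2$ by the three quantities on the right-hand side of \eqref{eq-3-3}. For this I would reuse the identity \eqref{eq-1-12} already established inside the proof of Lemma~\ref{lem-3-2}, which represents $A_i^T(\lambda^k - \lambda^*)$ as a gradient difference plus a penalty term plus a proximal term.

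The key algebraic step is to rewrite the penalty term with the aid of the feasibility relation \eqref{eq-1-11}, namely $c = \sum_{j=1}^N A_j x_j^*$. Substituting this and inserting $\pm A_i x_i^k$ converts the argument of $\rho A_i^T(\cdots)$ into
\begin{equation*}
A_i x_i^{k+1} + \sum_{j \neq i} A_j x_j^k - c = A_i\big(x_i^{k+1} - x_i^k\big) + \sum_{j=1}^N A_j\big(x_j^k - x_j^*\big).
\end{equation*}
After this regrouping, \eqref{eq-1-12} acquires the clean three-term form
\begin{equation*}
A_i^T(\lambda^k - \lambda^*) = \big[\nabla f_i(x_i^{k+1}) - \nabla f_i(x_i^*)\big] + \big(\rho A_i^TA_i + P_i\big)\big(x_i^{k+1} - x_i^k\big) + \rho A_i^T\sum_{j=1}^N A_j\big(x_j^k - x_j^*\big),
\end{equation*}
in which each summand already matches, up to taking norms, one of the three target quantities.

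With this decomposition in hand, I would apply the elementary inequality $\|a+b+c\|^2 \leq 2\|a\|^2 + 4\|b\|^2 + 4\|c\|^2$, obtained from $\|a+b+c\|^2 \le 2\|a\|^2 + 2\|b+c\|^2$ followed by $\|b+c\|^2 \le 2\|b\|^2 + 2\|c\|^2$, assigning the tight constant $2$ to the gradient-difference term. Summing over $i$, the first term is controlled through Assumption~\ref{ass-3} by $\|\nabla f_i(x_i^{k+1}) - \nabla f_i(x_i^*)\|^2 \le L_i^2\|x_i^{k+1} - x_i^*\|^2 \le L\|x_i^{k+1} - x_i^*\|^2$; the second term reproduces $4\sum_i \|(\rho A_i^TA_i + P_i)(x_i^{k+1} - x_i^k)\|^2$ verbatim; and the third term is handled by $\|A_i^T v\|^2 \le \|A_i^T\|^2\|v\|^2$ with $v = \sum_j A_j(x_j^k - x_j^*)$, so that the sum is at most $\rho^2 D\,\|\sum_j A_j(x_j^k-x_j^*)\|^2$ with $D = \sum_i\|A_i^T\|^2$. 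Combining these three bounds yields \eqref{eq-3-3}.

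There is no serious analytic obstacle in this lemma: once the correct representation of $A_i^T(\lambda^k - \lambda^*)$ is exposed, the remainder is a short chain of triangle and Cauchy–Schwarz estimates. The only points demanding care are the algebraic regrouping that isolates precisely $A_i(x_i^{k+1}-x_i^k)$ and $\sum_j A_j(x_j^k - x_j^*)$, and the deliberate decision to split off the gradient term with constant $2$ rather than $4$. The latter is what makes the resulting $2L\sum_i\|x_i^{k+1}-x_i^*\|^2$ term amenable, in the subsequent linear-convergence argument, to being absorbed by the strong-convexity gain $\alpha\sum_i\|x_i^{k+1}-x_i^*\|^2$ produced in Lemma~\ref{lem-3-2}.
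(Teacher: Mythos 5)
Your proposal is correct and follows essentially the same route as the paper's proof: both start from Lemma~\ref{lem-2} applied to $\lambda^k-\lambda^*$, substitute the optimality identity \eqref{eq-1-12}, use feasibility $c=\sum_j A_jx_j^*$ to regroup the penalty term into $(\rho A_i^TA_i+P_i)(x_i^{k+1}-x_i^k)+\rho A_i^T\sum_j A_j(x_j^k-x_j^*)$, and then apply the same $2/4/4$ splitting together with the Lipschitz bound and $\|A_i^Tv\|\le\|A_i^T\|\,\|v\|$. The only cosmetic difference is that you perform the algebraic regrouping before the norm splitting, whereas the paper splits off the gradient term first and rearranges afterward; the constants and conclusion are identical.
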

\begin{proof}
Using Lemma \ref{lem-2} and \eqref{eq-1-12}, we get
\begin{equation*}
\begin{split}
c_A^2\big\|\lambda^k - \lambda^*\big\|^2 & \leq \sum_{i=1}^N \big\|A_i^T (\lambda^k-\lambda^*)\big\|^2
\\
& \leq \sum_{i=1}^N \bigg\|\nabla f_i\big(x_i^{k+1}\big)-\nabla f_i\left(x_i^{*}\right)+ \rho A_i^T \Big( A_i \big(x_i^{k+1}-x_i^{*} \big) + \sum_{j \neq i}A_j\big( x_j^k - x_j^*\big)\Big)
\\
&\qquad + P_i \big(x_i^{k+1}-x_i^{k} \big)\bigg\|^2.
\end{split}
\end{equation*}
We apply the Cauchy-Schwartz inequality to deduce
\begin{equation*}
\begin{split}
&c_A^2\left\|\lambda^k-\lambda^* \right \|^2 
\\
& \leq 2\sum_{i=1}^N \Big\|\nabla f_i\big(x_i^{k+1}\big)-\nabla f_i\left(x_i^{*}\right)\Big\|^2
\\
&\qquad + 2\sum_{i=1}^N\bigg\|\rho A_i^T \Big( A_i \big(x_i^{k+1}-x_i^{*} \big) + \sum_{j \neq i}A_j\big( x_j^k - x_j^*\big)\Big)+ P_i \big(x_i^{k+1}-x_i^{k} \big)\bigg\|^2
\\
& = 2\sum_{i=1}^N \Big\|\nabla f_i\big(x_i^{k+1}\big)-\nabla f_i\left(x_i^{*}\right)\Big\|^2
\\
&\qquad + 2\sum_{i=1}^N\bigg\|\rho A_i^T\Big(\sum_{j=1}^N A_j \big(x_j^k-x_j^*\big) \Big) + \Big(\rho A_i^TA_i + P_i \Big)\big(x_i^{k+1} - x_i^k \big) \bigg\|^2,
\end{split}
\end{equation*}
where the latter equality holds by a simple rearrangement.
Now we apply Assumption \ref{ass-3} and the Cauch-Schwartz inequality to obtain
\begin{equation*}
\begin{split}
&c_A^2\left\|\lambda^k-\lambda^* \right \|^2
\\
& \leq 2\sum_{i=1}^N L_i^2\big\|x_i^{k+1}-x_i^*\big\|^2 
\\
&\qquad + 4\rho^2\sum_{i=1}^N\bigg\| A_i^T\Big(\sum_{j=1}^N A_j \big(x_j^k-x_j^*\big) \Big)\bigg\|^2 + 4\sum_{i=1}^N\bigg\|\Big(\rho A_i^TA_i + P_i \Big)\big(x_i^{k+1} - x_i^k \big) \bigg\|^2.
\end{split}
\end{equation*}
Applying the Cauchy-Schwartz inequality once more, we obtain
\begin{equation*}
\begin{split}
& c_A^2\left\|\lambda^k-\lambda^* \right \|^2 
\\
& \leq 2\left(\max_{1\leq i \leq N} L_i^2\right) \sum_{i=1}^N \big\|x_i^{k+1}-x_i^*\big\|^2 
\\
& \qquad + 4\rho^2\sum_{i=1}^N\big\| A_i^T\big\|^2\bigg\|\sum_{j=1}^N A_j \big(x_j^k-x_j^*\big) \bigg\|^2 + 4\sum_{i=1}^N\bigg\|\Big(\rho A_i^TA_i + P_i \Big)\big(x_i^{k+1} - x_i^k \big) \bigg\|^2
\\
& = 2L\sum_{i=1}^N\big\|x_i^{k+1}-x_i^*\big\|^2 
   + 4\rho^2D\bigg\|\sum_{j=1}^N A_j \big(x_j^k-x_j^*\big) \bigg\|^2 
\\   
& \qquad + 4\sum_{i=1}^N\bigg\|\Big(\rho A_i^TA_i + P_i \Big)\big(x_i^{k+1} - x_i^k \big) \bigg\|^2.
\end{split}
\end{equation*}
This proves the lemma.
\end{proof}
We derive the following result using Lemma \ref{lem-3-2} and Lemma \ref{lem-1}.
\begin{prop}\label{prop-3-4}
Suppose Assumptions \ref{ass-1}-\ref{ass-5} hold.
For $s>0$, we have the following inequality 
\begin{equation}\label{eq-3-5}
\begin{split}
&\left(\frac{1}{2\gamma \rho} - s c_A^2 \right)\big\|\lambda^k - \lambda^*\big\|^2   +  \frac{1}{2}\sum_{i=1}^N \big\|x_i^k - x_i^*\big\|_{(\rho+4Ns\rho^2 D) A_i^TA_i + P_i}^2 
\\
&\geq  \frac{1}{2\gamma \rho}\big\|\lambda^{k+1} - \lambda^*\big\|^2
  +  \frac{1}{2} \sum_{i=1}^N \big\|x_i^{k+1} - x_i^*\big\|_{\rho A_i^TA_i + P_i + 2(\alpha -2 Ls)I_{n_i}}^2 + G_k,
\end{split}
\end{equation}
where $H_k := \frac{1}{2} \sum_{i=1}^N \big\|x_i^{k+1}-x_i^k\big\|_{\rho A_i^TA_i+P_i}^2 + \frac{2-\gamma}{2\gamma^2\rho}\big\|\lambda^{k+1} - \lambda^{k}\big\|^2 + \frac{1}{\gamma} \Big\langle \lambda^k - \lambda^{k+1}, A\big(x^k - x^{k+1}\big)\Big\rangle$ and $G_k := H_k -4s\sum_{i=1}^N\bigg\|\Big(\rho A_i^TA_i + P_i \Big)\big(x_i^{k+1} - x_i^k \big) \bigg\|^2$.
\end{prop}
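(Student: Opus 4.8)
The plan is to derive \eqref{eq-3-5} as a weighted sum of the two preceding estimates: the strong-convexity descent inequality \eqref{eq-1-20} of Lemma~\ref{lem-3-2} and the dual-distance bound \eqref{eq-3-3} of Lemma~\ref{lem-1}. Concretely, I would multiply \eqref{eq-3-3} by the scalar $s>0$ and rearrange it into the lower bound
\begin{equation*}
-s c_A^2\|\lambda^k-\lambda^*\|^2 \ge -2Ls\sum_{i=1}^N\|x_i^{k+1}-x_i^*\|^2 - 4s\rho^2 D\Big\|\sum_{j=1}^N A_j(x_j^k-x_j^*)\Big\|^2 - 4s\sum_{i=1}^N\big\|(\rho A_i^TA_i+P_i)(x_i^{k+1}-x_i^k)\big\|^2,
\end{equation*}
and then add this to \eqref{eq-1-20}. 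The term $\tfrac{1}{2\gamma\rho}\|\lambda^k-\lambda^*\|^2$ on the left of \eqref{eq-1-20} is thereby reduced to the coefficient $\tfrac{1}{2\gamma\rho}-sc_A^2$ that appears on the left of \eqref{eq-3-5}.

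The next task is the algebraic bookkeeping of the resulting quadratic terms. The gain $\alpha\sum_i\|x_i^{k+1}-x_i^*\|^2$ produced by strong convexity in \eqref{eq-1-20} and the loss $-2Ls\sum_i\|x_i^{k+1}-x_i^*\|^2$ coming from the Lipschitz constant in \eqref{eq-3-3} merge into $(\alpha-2Ls)\sum_i\|x_i^{k+1}-x_i^*\|^2=\tfrac12\sum_i\|x_i^{k+1}-x_i^*\|^2_{2(\alpha-2Ls)I_{n_i}}$; combined with the $\tfrac12\sum_i\|x_i^{k+1}-x_i^*\|^2_{\rho A_i^TA_i+P_i}$ already present in \eqref{eq-1-20}, this produces the weight $\rho A_i^TA_i+P_i+2(\alpha-2Ls)I_{n_i}$ on the right of \eqref{eq-3-5}. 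Likewise, the term $-4s\sum_i\|(\rho A_i^TA_i+P_i)(x_i^{k+1}-x_i^k)\|^2$ imported from \eqref{eq-3-3} combines with $H_k$ to form precisely $G_k$ as defined in the statement, so nothing more is needed for the $\lambda^{k+1}$-term, the cross term $H_k$, and the $(\alpha-2Ls)$-part.

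The one contribution that does not respect the block-diagonal structure of the target inequality is the aggregate constraint residual $4s\rho^2 D\big\|\sum_{j} A_j(x_j^k-x_j^*)\big\|^2$, and this is where the delicate step lies. I would move it to the left-hand side and convert it into separable per-block quantities via the Cauchy--Schwarz bound $\big\|\sum_{j=1}^N A_j v_j\big\|^2 \le N\sum_{j=1}^N\|A_j v_j\|^2$ applied with $v_j = x_j^k - x_j^*$. This replaces the non-separable term by $N\sum_i\|A_i(x_i^k-x_i^*)\|^2 = \sum_i\|x_i^k-x_i^*\|^2_{N A_i^TA_i}$, which is exactly the extra summand that upgrades the weight $\rho A_i^TA_i+P_i$ on the left of \eqref{eq-1-20} to the weight $(\rho+4Ns\rho^2 D)A_i^TA_i+P_i$ in \eqref{eq-3-5}. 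The main obstacle is therefore not a conceptual one---both lemmas are already in hand---but the careful tracking of constants through this Cauchy--Schwarz step, since the aggregate residual must be absorbed into the modified left-hand block weights with precisely the coefficient $4Ns\rho^2 D$ claimed, so that the resulting inequality is compatible with the Lyapunov functional $\phi$ used later in the linear-rate argument.
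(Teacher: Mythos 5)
Your proposal reproduces the paper's own proof step for step: the paper likewise adds $s$ times the bound of Lemma~\ref{lem-1} to the inequality of Lemma~\ref{lem-3-2}, merges the strong-convexity gain $\alpha$ with the loss $-2Ls$ into the weight $2(\alpha-2Ls)I_{n_i}$, folds $-4s\sum_{i=1}^N\big\|(\rho A_i^TA_i+P_i)(x_i^{k+1}-x_i^k)\big\|^2$ into $G_k$, and disposes of the aggregate residual via the same Cauchy--Schwarz bound $\big\|\sum_{j=1}^N A_jv_j\big\|^2\le N\sum_{j=1}^N\|A_jv_j\|^2$, so in approach you and the paper coincide exactly.

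One caveat, which you inherit from the paper rather than introduce yourself: the final bookkeeping is off by a factor of two in both write-ups. Since $\tfrac12\|v\|^2_{4Ns\rho^2D\,A_i^TA_i}=2Ns\rho^2D\|A_iv\|^2$, upgrading the left-hand weight from $\rho A_i^TA_i+P_i$ to $(\rho+4Ns\rho^2D)A_i^TA_i+P_i$ only adds $2Ns\rho^2D\sum_i\|A_i(x_i^k-x_i^*)\|^2$, whereas the Cauchy--Schwarz step requires absorbing $4Ns\rho^2D\sum_i\|A_i(x_i^k-x_i^*)\|^2$; equivalently, the claimed coefficient would need $\tfrac N2\sum_j\|A_jv_j\|^2\ge\big\|\sum_jA_jv_j\big\|^2$, which fails in general (take all $A_jv_j$ equal). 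The derivation you and the paper give actually proves \eqref{eq-3-5} with the left-hand weight $(\rho+8Ns\rho^2D)A_i^TA_i+P_i$; this is harmless for the overall linear-rate argument provided the smallness condition on $s$ (cf.\ \eqref{eq-3-6} and Lemma~\ref{lem-3-5}) is tightened correspondingly, but as stated your claim that Cauchy--Schwarz yields ``exactly'' the coefficient $4Ns\rho^2D$ is not correct.
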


\begin{proof}
We recall from Lemma \ref{lem-3-2} the following inequality
\begin{equation}\label{eq-3-1}
\begin{split}
&\frac{1}{2\gamma \rho}\big\|\lambda^k - \lambda^*\big\|^2 + \frac{1}{2}\sum_{i=1}^N \big\|x_i^{k}-x_i^*\big\|_{\rho A_i^TA_i+P_i}^2
\\
&\geq \alpha \sum_{i=1}^N \big\|x_i^{k+1}-x_i^*\big\|^2 + \frac{1}{2\gamma \rho}\big\|\lambda^{k+1} - \lambda^*\big\|^2+\frac{1}{2}\sum_{i=1}^N \big\|x_i^{k+1}-x_i^*\big\|_{\rho A_i^TA_i+P_i}^2
\\
&+\frac{1}{2}\sum_{i=1}^N \big\|x_i^{k+1}-x_i^k\big\|_{\rho A_i^TA_i+P_i}^2 + \frac{2-\gamma}{2\gamma^2\rho}\big\|\lambda^{k+1} - \lambda^{k}\big\|^2 + \frac{1}{\gamma} \Big\langle \lambda^k - \lambda^{k+1}, A\big(x^k - x^{k+1}\big)\Big\rangle.
\end{split}
\end{equation} 
Next we recall the inequality of Lemma \ref{lem-1} given as
\begin{equation}\label{eq-1-13}
\begin{split}
&c_A^2\left\|\lambda^k-\lambda^* \right \|^2 
\\
& \leq   2L\sum_{i=1}^N\big\|x_i^{k+1}-x_i^*\big\|^2 
\\   
&\quad + 4\rho^2D\bigg\|\sum_{j=1}^N A_j \big(x_j^k-x_j^*\big) \bigg\|^2 + 4\sum_{i=1}^N\bigg\|\Big(\rho A_i^TA_i + P_i \Big)\big(x_i^{k+1} - x_i^k \big) \bigg\|^2.
\end{split}
\end{equation}
Summing \eqref{eq-1-13} multiplied by $s>0$ and \eqref{eq-3-1}, we get
\begin{equation}\label{eq-3-2}
\begin{split}
&\left(\frac{1}{2\gamma \rho} - s c_A^2 \right)\big\|\lambda^k - \lambda^*\big\|^2 +  \frac{1}{2}\sum_{i=1}^N \big\|x_i^k - x_i^*\big\|_{\rho A_i^TA_i + P_i}^2  + 4s\rho^2D\bigg\|\sum_{j=1}^N A_j \big(x_j^k-x_j^*\big) \bigg\|^2 
\\
&\geq (\alpha - 2Ls) \sum_{i=1}^N \big\|x_i^{k+1}-x_i^*\big\|^2 +\frac{1}{2\gamma \rho}\big\|\lambda^{k+1} - \lambda^*\big\|^2
  +  \frac{1}{2} \sum_{i=1}^N \big\|x_i^{k+1} - x_i^*\big\|_{\rho A_i^TA_i + P_i}^2
\\
& \quad + \frac{1}{2} \sum_{i=1}^N \big\|x_i^{k+1}-x_i^k\big\|_{\rho A_i^TA_i+P_i}^2 -4s\sum_{i=1}^N\bigg\|\Big(\rho A_i^TA_i + P_i \Big)\big(x_i^{k+1} - x_i^k \big) \bigg\|^2
\\
& \quad + \frac{2-\gamma}{2\gamma^2\rho}\big\|\lambda^{k+1} - \lambda^{k}\big\|^2 
 + \frac{1}{\gamma} \Big\langle \lambda^k - \lambda^{k+1}, A\big(x^k - x^{k+1}\big)\Big\rangle.
\end{split}
\end{equation}
We use the Cauchy-Schwartz inequality to find 
\begin{equation*}
4s\rho^2D\bigg\|\sum_{j=1}^N A_j \big(x_j^k-x_j^*\big) \bigg\|^2  \leq 4N s\rho^2D \sum_{j=1}^N \| A_j \big(x_j^k-x_j^*\big) \|^2. 
\end{equation*}
Using this inequality in \eqref{eq-3-2} and rearranging, we get
\begin{equation*}
\begin{split}
&\left(\frac{1}{2\gamma \rho} - s c_A^2 \right)\big\|\lambda^k - \lambda^*\big\|^2   +  \frac{1}{2}\sum_{i=1}^N \big\|x_i^k - x_i^*\big\|_{(\rho+4Ns\rho^2 D) A_i^TA_i + P_i}^2 
\\
&\geq  \frac{1}{2\gamma \rho}\big\|\lambda^{k+1} - \lambda^*\big\|^2
  +  \frac{1}{2} \sum_{i=1}^N \big\|x_i^{k+1} - x_i^*\big\|_{\rho A_i^TA_i + P_i + 2(\alpha -2 Ls)I_{n_i}}^2
\\
& \quad + \frac{1}{2} \sum_{i=1}^N \big\|x_i^{k+1}-x_i^k\big\|_{\rho A_i^TA_i+P_i}^2 -4s\sum_{i=1}^N\bigg\|\Big(\rho A_i^TA_i + P_i \Big)\big(x_i^{k+1} - x_i^k \big) \bigg\|^2
\\
& \quad + \frac{2-\gamma}{2\gamma^2\rho}\big\|\lambda^{k+1} - \lambda^{k}\big\|^2 
 + \frac{1}{\gamma} \Big\langle \lambda^k - \lambda^{k+1}, A\big(x^k - x^{k+1}\big)\Big\rangle,
\end{split}
\end{equation*}
which is the desired inequality. 
\end{proof}

To show the main convergence result Proposition \ref{thm-main}, we need the following two lemmas.
\begin{lem}\label{lem-3-5}If $s >0$ satisfies
\begin{equation}\label{eq-3-6}
s \Big[ \rho^2 D \|A_i \|^2 + \frac{L}{N}\Big] < \frac{\alpha}{2N},
\end{equation}  
there exists $\mu_s \in (0,1)$ such that
\begin{equation}\label{eq-3-7}
\|x\|^2_{(\rho +4 Ns \rho^2 D)A_i^T A_i + P_i} \leq \mu_s \|x\|^2_{\rho A_i^T A_i + P_i +2(\alpha -2Ls) I_{n_i}} \quad \forall~x \in \mathbb{R}^{n_i}.
\end{equation}
\end{lem}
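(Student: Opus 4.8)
The plan is to recast the semidefinite inequality \eqref{eq-3-7} as a generalized eigenvalue bound. Set $M_L := (\rho + 4Ns\rho^2 D)A_i^TA_i + P_i$ and $M_R := \rho A_i^TA_i + P_i + 2(\alpha - 2Ls)I_{n_i}$; both are symmetric, and \eqref{eq-3-7} is precisely the assertion $M_L \preceq \mu_s M_R$. The strategy is (i) to show $M_R \succ 0$, (ii) to show $M_R - M_L \succ 0$, this second fact turning out to be equivalent to the hypothesis \eqref{eq-3-6}, and (iii) to define $\mu_s$ as the largest generalized eigenvalue of the pencil $(M_L, M_R)$ and read off $\mu_s \in (0,1)$.

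For step (i), the hypothesis \eqref{eq-3-6} forces $2Ls < \alpha$ (the term $\rho^2 D\|A_i\|^2$ being nonnegative), hence $\alpha - 2Ls > 0$; combined with $P_i \succeq 0$ and $\rho A_i^TA_i \succeq 0$ this gives $M_R \succeq 2(\alpha - 2Ls)I_{n_i} \succ 0$. For step (ii), I would compute the difference directly: the $P_i$ terms cancel and the $A_i^TA_i$ coefficients combine to give
\begin{equation*}
M_R - M_L = 2(\alpha - 2Ls)I_{n_i} - 4Ns\rho^2 D\, A_i^TA_i.
\end{equation*}
Using $x^T A_i^TA_i x = \|A_ix\|^2 \leq \|A_i\|^2\|x\|^2$, one obtains $x^T(M_R - M_L)x \geq \big[2(\alpha - 2Ls) - 4Ns\rho^2 D\|A_i\|^2\big]\|x\|^2$, and the bracketed constant is positive precisely because \eqref{eq-3-6}, after multiplying by $2N$, reads $2Ns\rho^2 D\|A_i\|^2 + 2Ls < \alpha$, i.e. $2(\alpha - 2Ls) > 4Ns\rho^2 D\|A_i\|^2$. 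Thus $M_R - M_L \succ 0$.

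For step (iii), since $M_R \succ 0$ I would set $\mu_s := \max_{x \neq 0}\frac{x^T M_L x}{x^T M_R x}$, the supremum being attained on the compact unit sphere by continuity (the quotient is homogeneous of degree zero and the denominator is bounded below by $2(\alpha-2Ls)\|x\|^2 > 0$). By construction $\mu_s$ is the smallest constant for which $M_L \preceq \mu_s M_R$, which establishes \eqref{eq-3-7}. The bound $M_R - M_L \succ 0$ from step (ii) yields $x^TM_Lx < x^TM_Rx$ for all $x \neq 0$, whence $\mu_s < 1$; moreover $\mu_s \geq 0$ since $M_L \succeq 0$. In the degenerate situation $M_L = 0$ (possible only when $A_i = 0$ and $P_i = 0$) any value in $(0,1)$ works, so one simply replaces $\mu_s$ by $\max\{\mu_s, 1/2\}$ to guarantee $\mu_s \in (0,1)$.

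The computation is essentially bookkeeping; the only point requiring care is the exact matching in step (ii), where the spectral-norm estimate $x^TA_i^TA_ix \leq \|A_i\|^2\|x\|^2$ must be applied so that the scalar condition \eqref{eq-3-6} translates verbatim into $M_R - M_L \succ 0$. I do not anticipate a genuine obstacle beyond this accounting.
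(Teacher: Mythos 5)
Your proposal is correct and follows essentially the same route as the paper: the paper's (much terser) proof likewise asserts that the hypothesis \eqref{eq-3-6} yields the strict matrix ordering $(\rho + 4Ns\rho^2 D)A_i^TA_i + P_i \prec \rho A_i^TA_i + P_i + 2(\alpha - 2Ls)I_{n_i}$ and then takes $\mu_s \in (0,1)$ sufficiently close to $1$. Your steps (ii) and (iii) simply make explicit the details the paper leaves implicit, namely the computation $M_R - M_L = 2(\alpha-2Ls)I_{n_i} - 4Ns\rho^2 D\,A_i^TA_i$ with the spectral-norm bound, and the compactness argument producing $\mu_s < 1$ from the strict ordering.
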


\begin{proof}
If we choose $s>0$ that satisfies \eqref{eq-3-6}, we have 
    \begin{equation*}
        (\rho+4Ns\rho^2D)A_i^TA_i + P_i \prec \rho A_i^TA_i + P_i + 2(\alpha-2Ls)I_{n_i}.
    \end{equation*}
    Thus, we obtain the final result for $\mu_s \in (0,1)$, which is sufficiently close to $1$.
\end{proof}

\begin{lem}\label{lem-3-6} If one chooses some $\xi_i > 0$ such that $0 < \gamma < 2$, $\rho > 0$, and $s>0$ satisfies the following conditions:
\begin{equation*}
\left\{ \begin{array}{ll}
\rho A_i^TA_i + P_i -8s (\rho A_i^TA_i + P_i)^T(\rho A_i^TA_i + P_i) - \frac{\rho}{\xi_i}A_i^TA_i \succ 0,
 \\
\sum_{i=1}^N \xi_i < 2- \gamma,
\end{array}\right.
\end{equation*}
 then we have
\begin{equation*}
\begin{split}
&\frac{1}{2} \sum_{i=1}^N \big\|x_i^{k+1}-x_i^k\big\|_{\rho A_i^TA_i+P_i}^2 -4s\sum_{i=1}^N\bigg\|\Big(\rho A_i^TA_i + P_i \Big)\big(x_i^{k+1} - x_i^k \big) \bigg\|^2
\\
& \quad + \frac{2-\gamma}{2\gamma^2\rho}\big\|\lambda^{k+1} - \lambda^{k}\big\|^2 
 + \frac{1}{\gamma} \Big\langle \lambda^k - \lambda^{k+1}, A\big(x^k - x^{k+1}\big)\Big\rangle\geq 0.
\end{split}
\end{equation*}
\end{lem}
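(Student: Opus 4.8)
The plan is to read the inequality as a quadratic form in the increments $x_i^{k+1}-x_i^k$ and $\lambda^{k+1}-\lambda^k$, and to absorb the only indefinite term, the cross term $\tfrac{1}{\gamma}\langle \lambda^k-\lambda^{k+1}, A(x^k-x^{k+1})\rangle$, by distributing it across the $N$ blocks and applying Young's inequality with weights tuned to the $\xi_i$. The two hypotheses are built so that one controls the dual contribution $\|\lambda^{k+1}-\lambda^k\|^2$ and the other controls the primal contribution block by block.

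First I would expand the cross term along the blocks via $A(x^k-x^{k+1}) = \sum_{i=1}^N A_i(x_i^k-x_i^{k+1})$, giving
\[
\frac{1}{\gamma}\big\langle \lambda^k-\lambda^{k+1}, A(x^k-x^{k+1})\big\rangle = \frac{1}{\gamma}\sum_{i=1}^N \big\langle \lambda^k-\lambda^{k+1}, A_i(x_i^k-x_i^{k+1})\big\rangle.
\]
To each summand I would apply $\langle u,v\rangle \geq -\tfrac{1}{2}\big(t_i\|u\|^2 + t_i^{-1}\|v\|^2\big)$ with the calibrated choice $t_i = \xi_i/(\gamma\rho)$, obtaining
\[
\frac{1}{\gamma}\big\langle \lambda^k-\lambda^{k+1}, A_i(x_i^k-x_i^{k+1})\big\rangle \geq -\frac{\xi_i}{2\gamma^2\rho}\big\|\lambda^{k+1}-\lambda^k\big\|^2 - \frac{\rho}{2\xi_i}\big\|A_i(x_i^{k+1}-x_i^k)\big\|^2.
\]

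The point of this choice is that after summation the coefficient of $\|\lambda^{k+1}-\lambda^k\|^2$ becomes $\tfrac{2-\gamma-\sum_i\xi_i}{2\gamma^2\rho}$, which is nonnegative exactly by the second hypothesis $\sum_i\xi_i < 2-\gamma$, so that dual term can be dropped. What remains is a sum of purely primal per-block quadratic forms, and it suffices to show each is nonnegative. Writing $d_i = x_i^{k+1}-x_i^k$ and using $\|A_id_i\|^2 = d_i^T A_i^TA_i d_i$ and $\|(\rho A_i^TA_i+P_i)d_i\|^2 = d_i^T (\rho A_i^TA_i+P_i)^T(\rho A_i^TA_i+P_i)d_i$, the $i$-th primal term equals
\[
\tfrac{1}{2}\,d_i^T\!\left[\rho A_i^TA_i + P_i - 8s(\rho A_i^TA_i+P_i)^T(\rho A_i^TA_i+P_i) - \tfrac{\rho}{\xi_i}A_i^TA_i\right]\!d_i,
\]
which is nonnegative precisely because the bracketed matrix is positive definite by the first hypothesis in \eqref{eq-2-2}.

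I do not expect a serious obstacle: the lemma is essentially the completion of a quadratic form, and the genuine difficulty has been front-loaded into the two structural conditions. The one point requiring care is the calibration $t_i = \xi_i/(\gamma\rho)$, which must simultaneously collapse the dual coefficient to $\tfrac{2-\gamma-\sum_i\xi_i}{2\gamma^2\rho}$ and produce a primal penalty of exactly $\tfrac{\rho}{\xi_i}A_i^TA_i$, matching the term subtracted in the first hypothesis; any other scaling would leave a gap between the two conditions. Once this weight is fixed, the result follows by summing the block estimates.
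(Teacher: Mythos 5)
Your proof is correct and follows essentially the same route as the paper: Young's inequality applied blockwise to the cross term with the weight $t_i = \xi_i/(\gamma\rho)$ (the paper states this for the doubled quantity, with $\frac{2}{\gamma}\langle\cdot,\cdot\rangle \geq -\frac{\xi_i}{\rho\gamma^2}\|\lambda^{k+1}-\lambda^k\|^2 - \frac{\rho}{\xi_i}\|A_i(x_i^{k+1}-x_i^k)\|^2$), after which the dual coefficient collapses to $\frac{2-\gamma-\sum_i\xi_i}{2\gamma^2\rho} \geq 0$ and each remaining primal block is the quadratic form of the matrix assumed positive definite in the first hypothesis.
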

\begin{proof}
Using Young's inequality, we have
\begin{equation}\label{eq-3-4}
\begin{split}
\frac{2}{\gamma} \Big\langle \lambda^k - \lambda^{k+1}, A\big(x^k - x^{k+1}\big)\Big\rangle &= \sum_{i=1}^N \frac{2}{\gamma} (\lambda^k - \lambda^{k+1})^T A_i(x_i^k - x_i^{k+1})
\\
&\geq -\sum_{i=1}^N \bigg(\frac{\xi_i}{\rho \gamma^2}\| \lambda^{k+1} - \lambda^{k} \|^2 + \frac{\rho}{\xi_i}\|A_i(x_i^k - x_i^{k+1})\|^2 \bigg)
\\
& = -\frac{\sum_{i=1}^N\xi_i}{\rho \gamma^2}\| \lambda^{k+1} - \lambda^{k} \|^2 -\sum_{i=1}^N \frac{\rho}{\xi_i}\|A_i(x_i^k - x_i^{k+1})\|^2,
\end{split}
\end{equation}
for $\xi_j>0~(1\leq j\leq N)$.
Adding $\sum_{i=1}^N \big\|x_i^{k+1}-x_i^k\big\|_{\rho A_i^TA_i+P_i}^2$, $-8s\sum_{i=1}^N\bigg\|\Big(\rho A_i^TA_i + P_i \Big)\big(x_i^{k+1} - x_i^k \big) \bigg\|^2$, and $\frac{2-\gamma}{\gamma^2\rho}\big\|\lambda^{k+1} - \lambda^{k}\big\|^2$ to both sides of \eqref{eq-3-4}, we obtain
\begin{equation*}
\begin{split}
& \sum_{i=1}^N \big\|x_i^{k+1}-x_i^k\big\|_{\rho A_i^TA_i+P_i}^2 -8s\sum_{i=1}^N\bigg\|\Big(\rho A_i^TA_i + P_i \Big)\big(x_i^{k+1} - x_i^k \big) \bigg\|^2
\\
& \quad + \frac{2-\gamma}{\gamma^2\rho}\big\|\lambda^{k+1} - \lambda^{k}\big\|^2 
 + \frac{2}{\gamma} \Big\langle \lambda^k - \lambda^{k+1}, A\big(x^k - x^{k+1}\big)\Big\rangle
\\
&\geq \sum_{i=1}^N \|x_i^{k+1}-x_i^k\|^2_{\rho A_i^TA_i+P_i -8s (\rho A_i^TA_i+P_i)^T(\rho A_i^TA_i+P_i) - \frac{\rho}{\xi_i}A_i^TA_i}
\\
&+ \bigg(\frac{2-\gamma-\sum_{i=1}^N \xi_i}{\gamma^2\rho}\bigg)\|\lambda^{k+1} - \lambda^{k}\|^2
\end{split}
\end{equation*}
for $\xi_j > 0 ~ (j=1, \cdots, N)$.
The proof is done.
\end{proof} 
 
We are now ready to show our main convergence and rate results.
\begin{proof}[Proof of Theorem \ref{thm-main}]
By applying Lemma \ref{lem-3-5} and Lemma \ref{lem-3-6} to the inequality of Proposition \ref{prop-3-4}, we get
\begin{equation*} 
\begin{split}
&\left(\frac{1}{2\gamma \rho} - s c_A^2 \right)\big\|\lambda^k - \lambda^*\big\|^2    +  \frac{\mu_s}{2}\sum_{i=1}^N \big\|x_i^k - x_i^*\big\|_{\rho A_i^TA_i + P_i + 2(\alpha -2 Ls)I_{n_i}}^2    
\\
&\geq  \frac{1}{2\gamma \rho}\big\|\lambda^{k+1} - \lambda^*\big\|^2
  +  \frac{1}{2} \sum_{i=1}^N \big\|x_i^{k+1} - x_i^*\big\|_{\rho A_i^TA_i + P_i + 2(\alpha -2 Ls)I_{n_i}}^2. 
\end{split}
\end{equation*}
This gives the following inequality 
\begin{equation*} 
\begin{split}
& \sigma \bigg( \frac{1}{2\gamma \rho}  \big\|\lambda^k - \lambda^*\big\|^2    +  \frac{1}{2}\sum_{i=1}^N \big\|x_i^k - x_i^*\big\|_{\rho A_i^TA_i + P_i + 2(\alpha -2 Ls)I_{n_i}}^2  \bigg)  
\\
&\geq  \frac{1}{2\gamma \rho}\big\|\lambda^{k+1} - \lambda^*\big\|^2
  +  \frac{1}{2} \sum_{i=1}^N \big\|x_i^{k+1} - x_i^*\big\|_{\rho A_i^TA_i + P_i + 2(\alpha -2 Ls)I_{n_i}}^2,
\end{split}
\end{equation*}
where $\sigma = \max \left\{ {1-2\gamma\rho sc_A^2,\mu_s}\right\}$.
The proof is done.
\end{proof}

\section{Numerical experiments}\label{sec-4}
In this section, we present numerical tests of Algorithm \ref{algo} supporting the linear convergence property proved in Theorem \ref{thm-main}. 
We test the Algorithm \ref{algo} for the linearly const rained quadratic programming (LCQP) model \cite{A_faster} and the optimal source allocation problem \cite{Optimal_scaling}.
In Section~\ref{sec-4}, all numerical experiments were performed using Python~3.10.16 with NumPy~2.1.3, SciPy~1.10.1, and Matplotlib~3.9.2 on a workstation with an Intel Core i9-12900KS CPU (3.40\,GHz) and 32.0\,GB RAM.

\subsection{The LCQP Model}\label{subsec-4}

Here we consider the following LCQP model:
\begin{equation}\label{model-1}
\begin{split}
& \min \sum_{i=1}^N f_i(x_i)
\\
& \textrm{s.t.}~ \sum_{i=1}^N A_ix_i = c
\end{split}
\end{equation}
where $f_i:\mathbb{R}^{n_i}\rightarrow \mathbb{R}~\textrm{is given by}~f_i(x_i) = \frac{1}{2}x_i^TH_ix_i + x_i^Tq_i \textrm{ with} ~ H_i\in \mathbb{S}_+^{n_i}, ~q_i \in \mathbb{R}^{n_i},~ A_i \in \mathbb{R}^{m \times n_i}~ \textrm{and} ~c\in \mathbb{R}^m$. 
Since $H_i\in \mathbb{S}_+^{n_i}$, the quadratic function $f_i$ is strongly convex for each $i=1,\ldots,N$.
We take $n_1=\cdots=n_N:=n$ in our experiment. 
We conduct experiments with the damping parameter $\gamma \in \{0.1, 0.5, 1.5, 1.9\}$, and the penalty parameter $\rho$, chosen as $\{0.03, 1, 5, 10\}$ for $N=3$ and $\{10^{-5}, 0.1, 5, 10\}$ for $N=10$, in Algorithm \ref{algo}.
The matrices involved in this problem are randomly generated through the following procedure:
\begin{enumerate}
\item Each $A_i\in\mathbb{R}^{m \times n_i}(i=1,\ldots, N)$ is randomly generated with i.i.d. standard normal entries with satisfying the condition that $\big[A_1^T; \ldots; A_N^T\big]$ has full column rank.
\item The positive semi-definite matrices $P_i \in \mathbb{R}^{n_i \times n_i} (i=1,\ldots, N)$ are generated by sampling $B \in \mathbb{R}^{n_i \times n_i}$ with i.i.d. standard normal entries, and then symmetrizing as $P_i = \frac{1}{2}(B+B^T)$ with checking that the eigenvalues of $P_i$ are non-negative.
\item The positive definite matrices $H_i \in \mathbb{S}_+^{n_i} (i=1,\ldots,N)$ are generated by first sampling $B \in \mathbb{R}^{n_i \times n_i} $ with i.i.d. standard normal entries, and then symmetrizing as $H_i = \frac{1}{2}(B+B^T)$ with checking that the eigenvalues of $H_i$ are positive.
\item We first generate the optimal solution $x_i^* \in \mathbb{R}^{n_i}$ and $\lambda^* \in \mathbb{R}^{m}$ as follows: $x_i^* = randn(n_i,1)$ for $i = 1, \ldots, N$, and $\lambda^* = randn(m,1)$;
\item Next we define $q_i\in \mathbb{R}^{n_i}(i=1,\cdots,N)$ and $c \in \mathbb{R}^m$ as follows: $q_i = -H_i x_i^* + A_i^T \lambda^*$ and $c = \sum_{i=1}^N A_ix_i^*$ so that $\left({x_1^*}^T,\cdots,{x_N^*}^T, {\lambda^*}^T \right)^T$ satisfies the KKT conditions.
\end{enumerate}
We update $x_i^{k+1}$ in \eqref{eq-1-5} using the equation from the optimality given as follows:
\begin{equation*}
H_ix_i^{k+1} + q_i + \rho A_i^T\Bigg(A_ix_i^{k+1} + \sum_{j\neq i}A_jx_j^k -c- \frac{\lambda^k}{\rho} \Bigg) + P_i\big(x_i^{k+1} - x_i^{k}\big)=0.
\end{equation*}
We define the error metric at each $k$-th iteration for $u^k = \big(x_1^k, \ldots, x_N^k ; \lambda^k\big)$ as follows:
\begin{equation}\label{eq-4-1}
\text{dis}(u^k) := \max \Big\{\big\|x_1^k-x_1^*\big\|,\ldots, \big\|x_N^k-x_N^*\big\|,\big\| \lambda^k-\lambda^* \big\| \Big\}.
\end{equation}
The initial values $u^0 = (x_1^0, \ldots, x_N^0; \lambda^0)$ are set to zero.
The experiments were conducted for $N=3$ with $m=100$ and $n=40$, and for $N=10$ with $m=100$ and $n=60$.
The experimental results are plotted in Figures \ref{fig-1} and \ref{fig-2} for the $3$-block case, and in Figures \ref{fig-3} and \ref{fig-4} for the $10$-block case.  
We find that the value $\text{dis}(u^k)$ decreases to zero linearly as expected in Theorem \ref{thm-main}.

As shown in Figure \ref{fig-1}, for fixed values of the damping parameter $\gamma \in \{0.1, 0.5\}$, the algorithm exhibits the fastest convergence when $\rho = 1$.
This indicates that a smaller penalty parameter does not necessarily lead to faster convergence, emphasizing the importance of proper choice of the penalty parameter $\rho$ for the fast convergence.
Figure \ref{fig-2} shows the effect of the damping parameter $\gamma$ under fixed $\rho$. 
The convergence rate improves as $\gamma$ increases when $\rho = 0.03$, whereas for $\rho = 5$, the algorithm exhibits similar convergence rates across all tested values of $\gamma$, indicating limited sensitivity to the damping parameter.

\begin{figure}[htbp]
    \centering
    \subfloat[$\rho = \{0.03, 1, 5, 10\}$, $\gamma = 0.1$]{{\includegraphics[width=6cm]{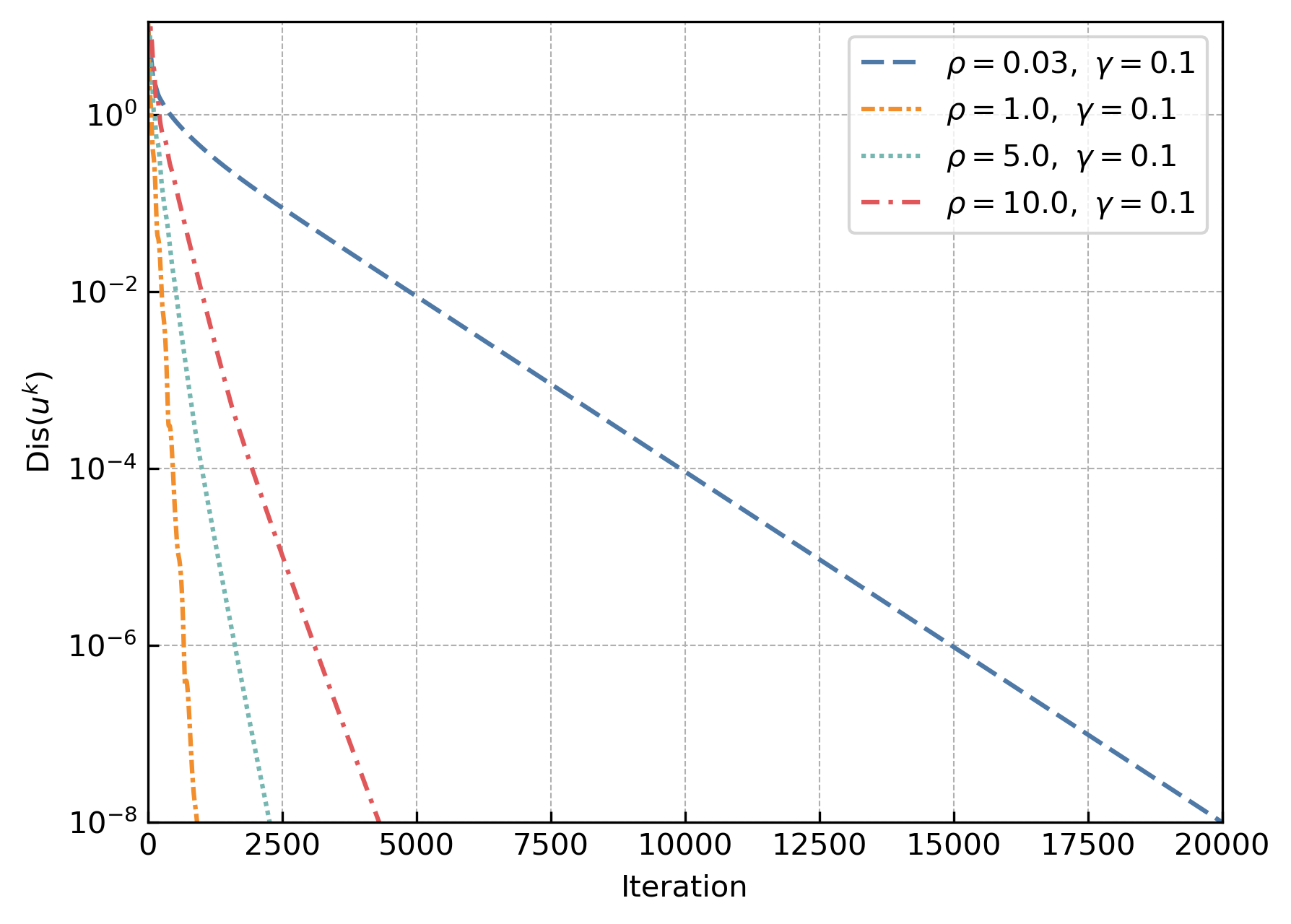} }}
    \subfloat[$\rho = \{0.03, 1, 5, 10\}$, $\gamma = 0.5$]{{\includegraphics[width=6cm]{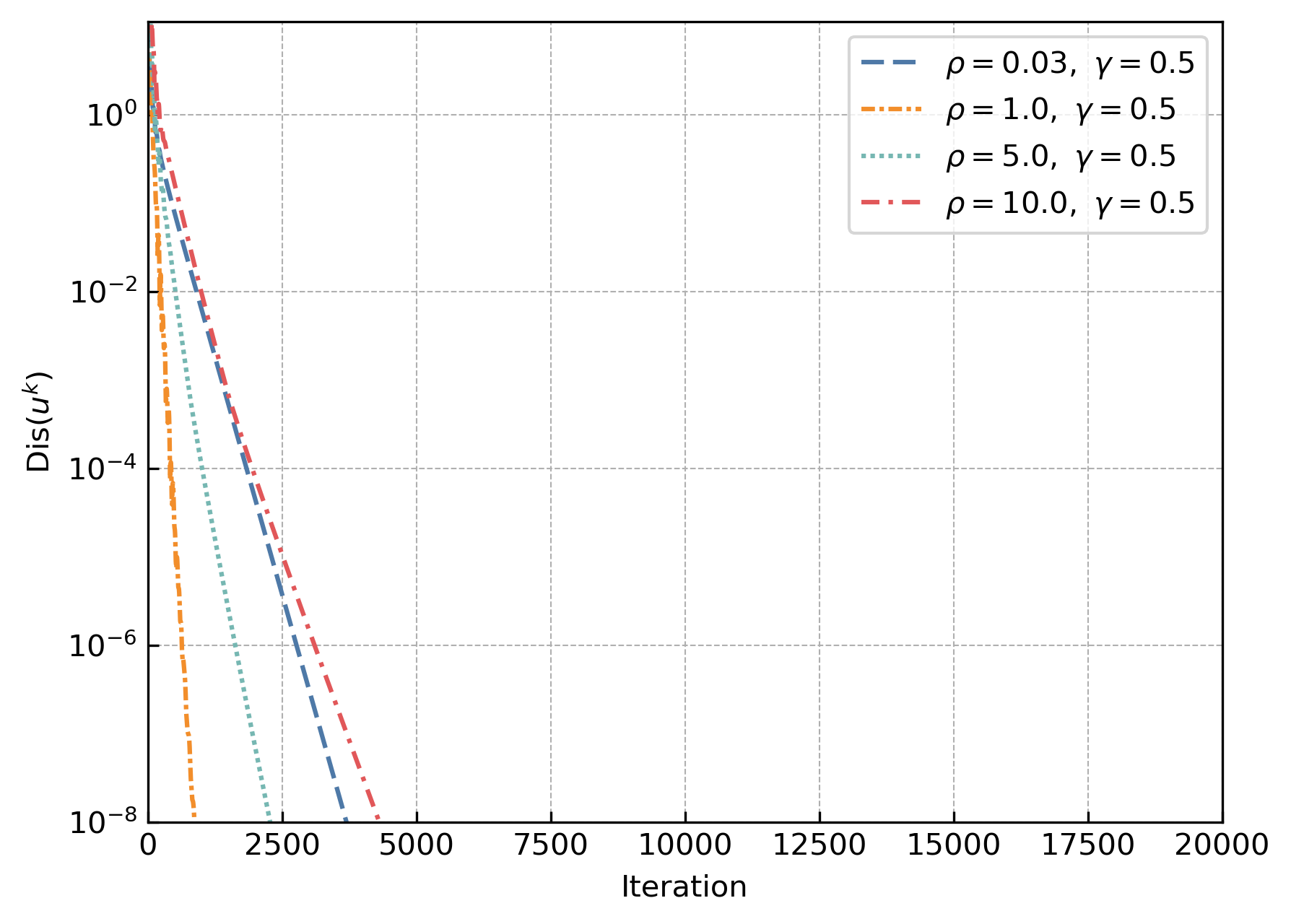} }}
    \\
    \subfloat[$\rho = \{0.03, 1, 5, 10\}$, $\gamma = 1.5$]{{\includegraphics[width=6cm]{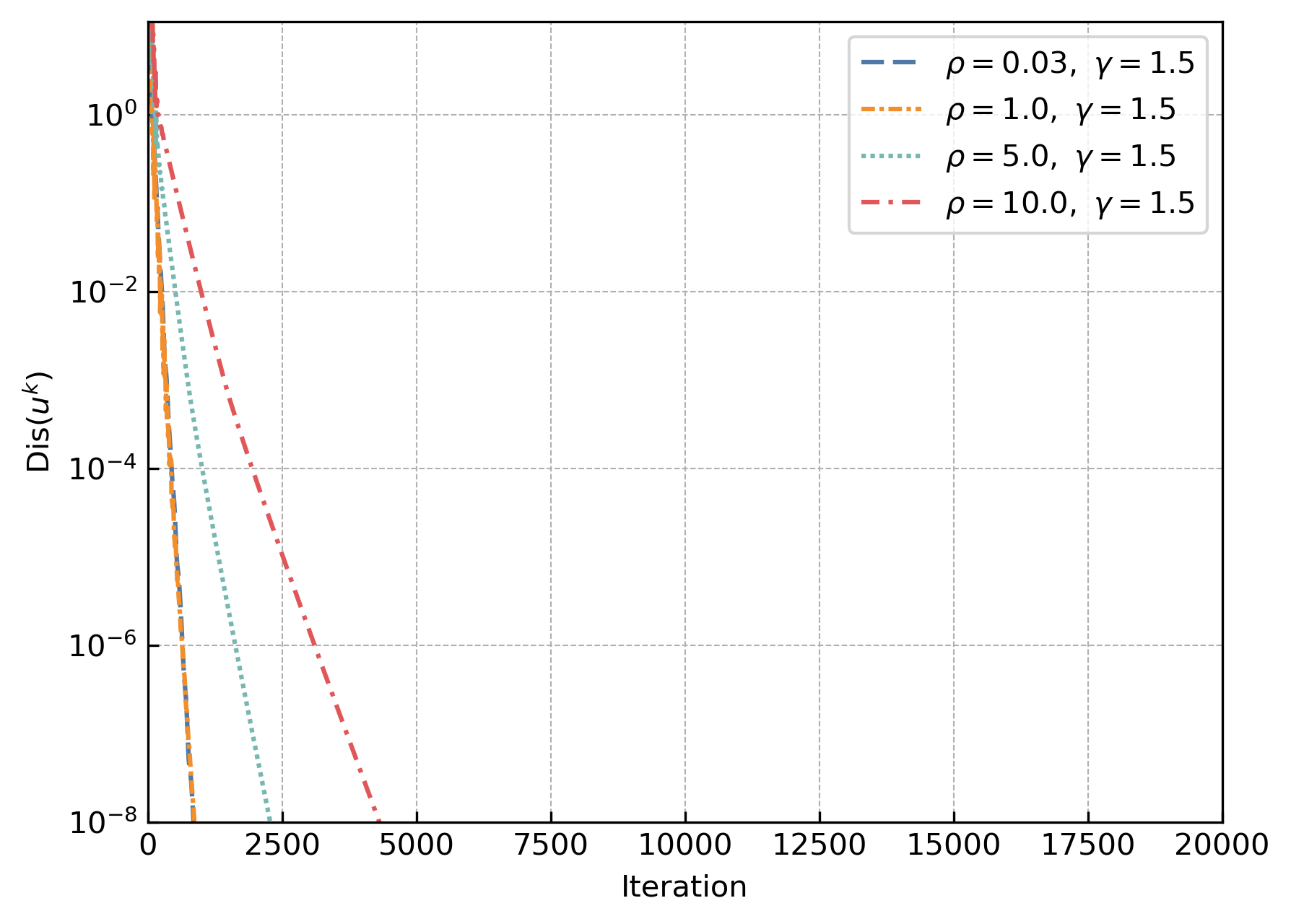} }}
    \subfloat[$\rho = \{0.03, 1, 5, 10\}$, $\gamma = 1.9$]{{\includegraphics[width=6cm]{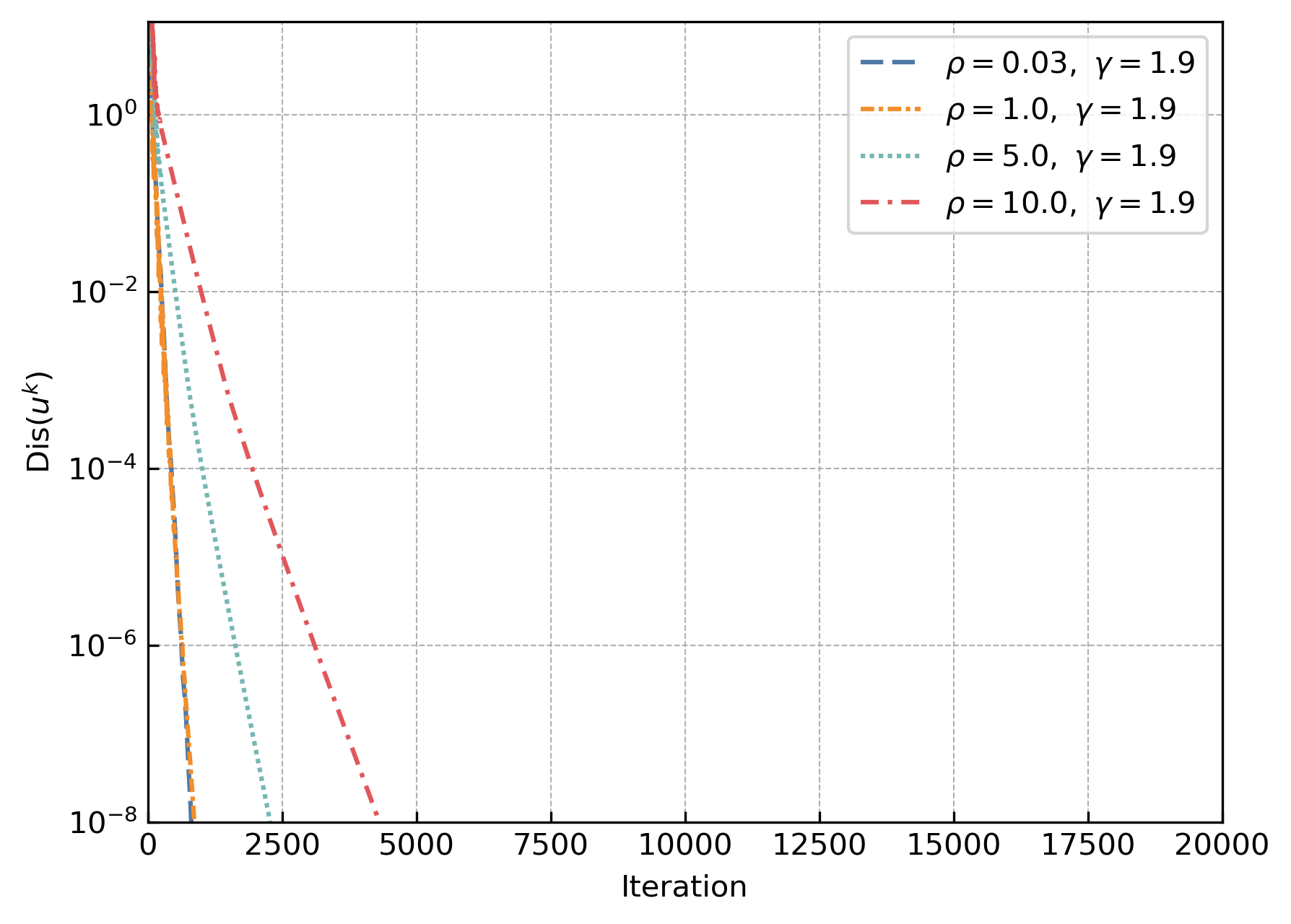} }}
    \caption{Experimental results on LCQP model for $N=3$ ($m=100$, $n=40$), with fixed $\gamma$ and varying $\rho$.}\label{fig-1}
\end{figure}

\begin{figure}[htbp]
    \centering
    \subfloat[$\rho = 0.03$, $\gamma = \{0.1, 0.5, 1.5, 1.9\}$]{{\includegraphics[width=6cm]{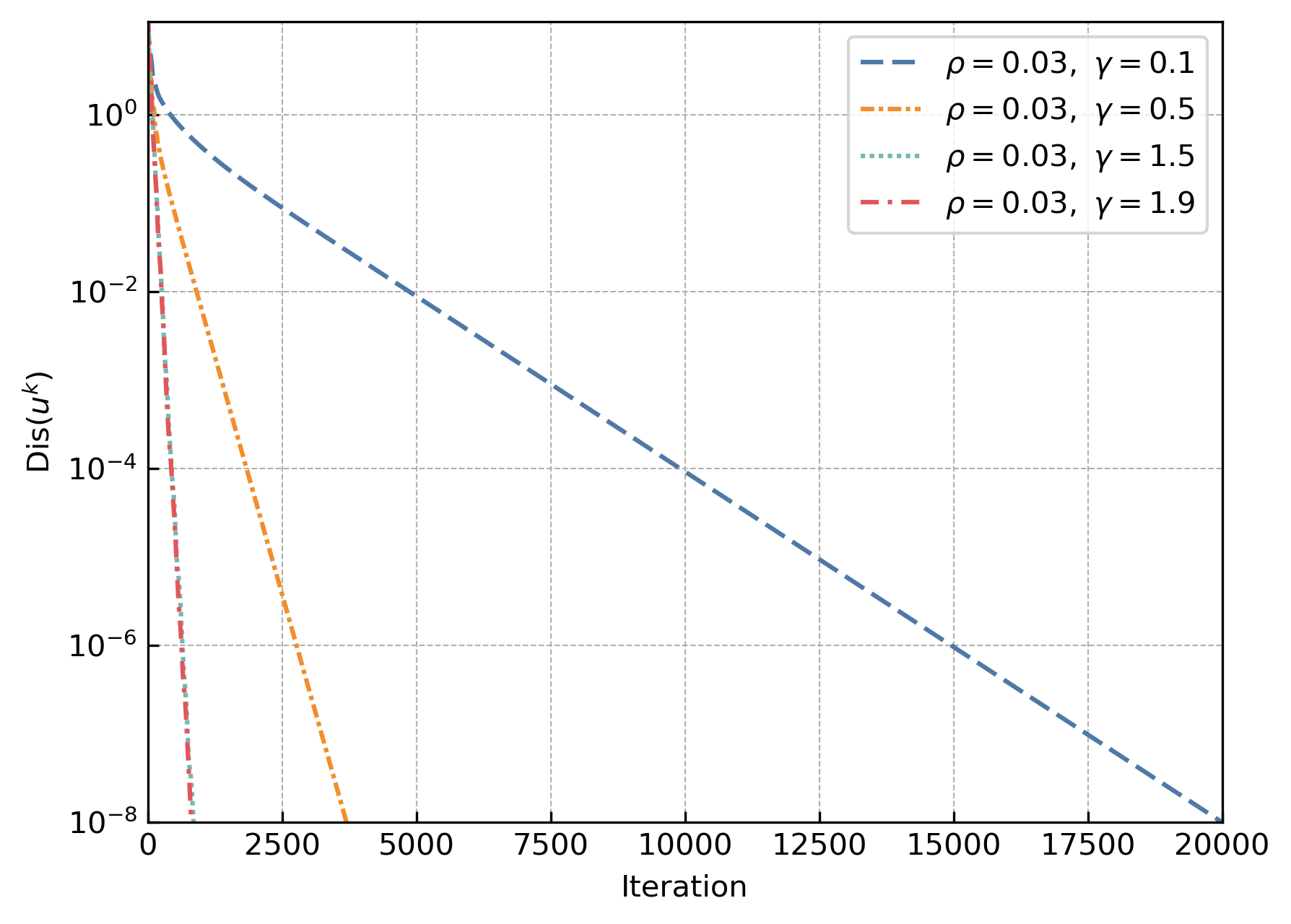} }}
    \subfloat[$\rho = 5$, $\gamma = \{0.1, 0.5, 1.5, 1.9\}$]{{\includegraphics[width=6cm]{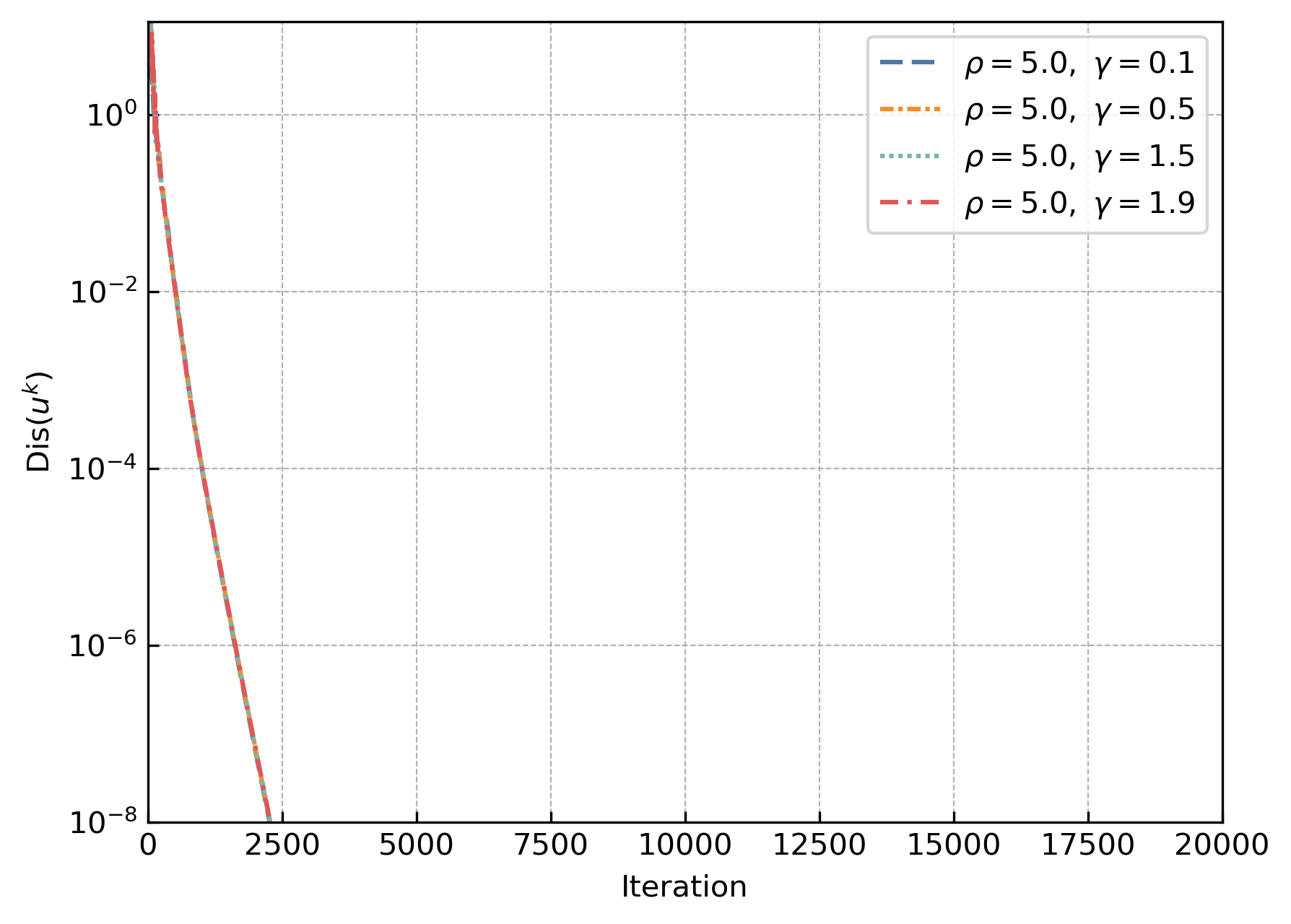} }}
    \caption{Experimental results on LCQP model for $N=3$ ($m=100$, $n=40$), with fixed $\rho$ and varying $\gamma$.}\label{fig-2}
\end{figure}

\begin{figure}[htbp]
    \centering
    \subfloat[$\rho = \{10^{-5}, 0.1, 5, 10\}$, $\gamma = 0.1$]{{\includegraphics[width=6cm]{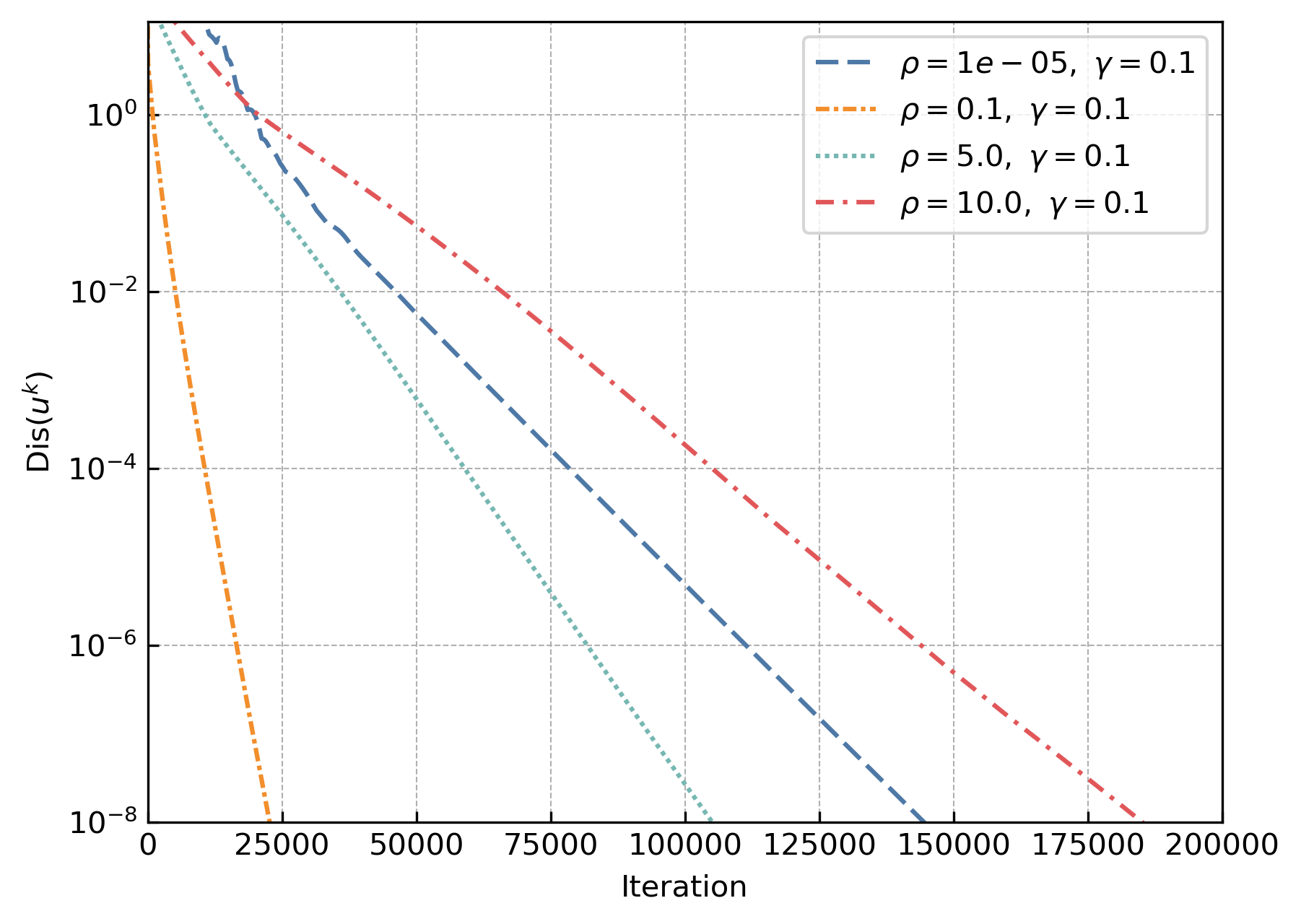} }}
    \subfloat[$\rho = \{10^{-5}, 0.1, 5, 10\}$, $\gamma = 0.5$]{{\includegraphics[width=6cm]{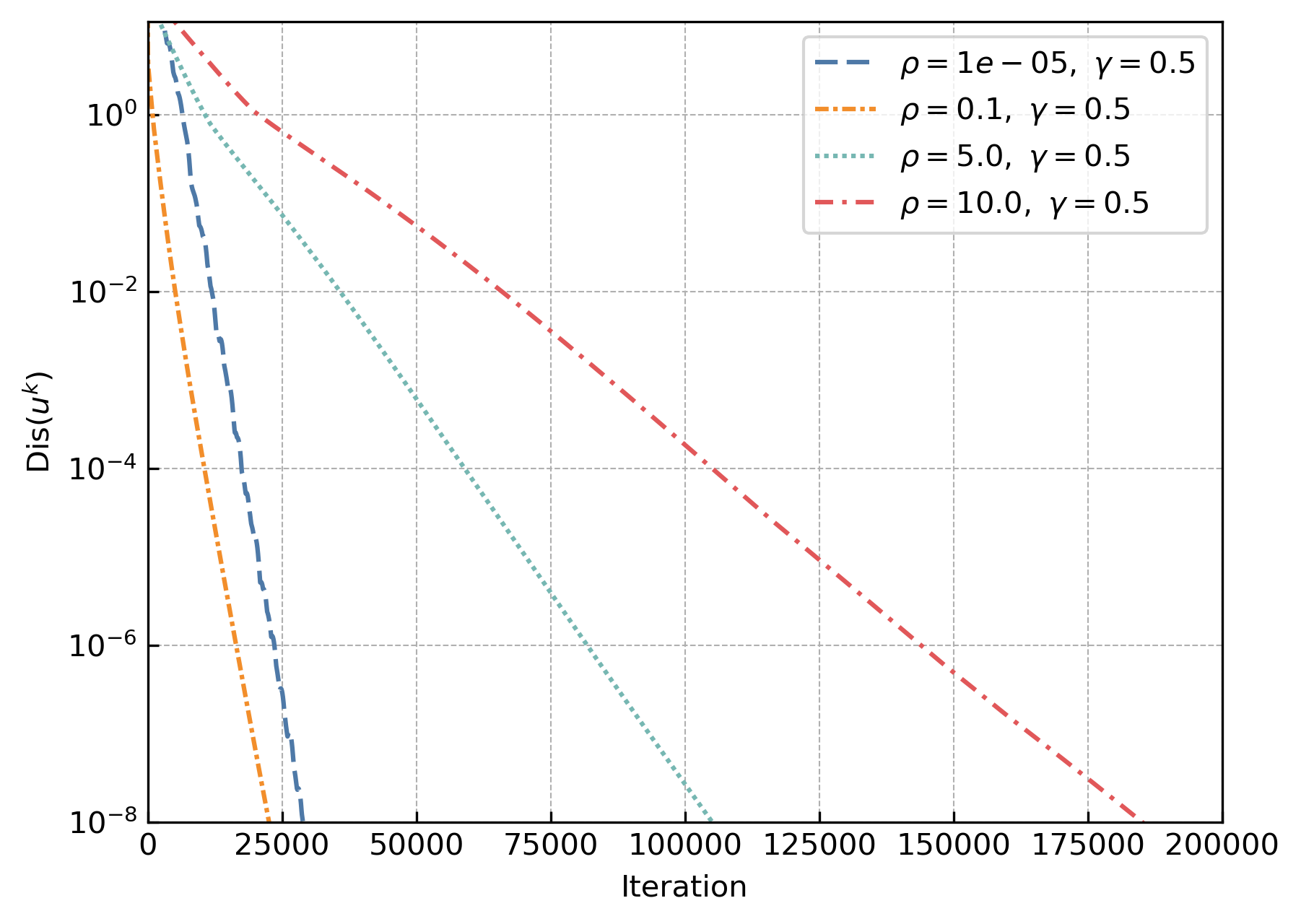} }}
    \\
    \subfloat[$\rho = \{10^{-5}, 0.1, 5, 10\}$, $\gamma = 1.5$]{{\includegraphics[width=6cm]{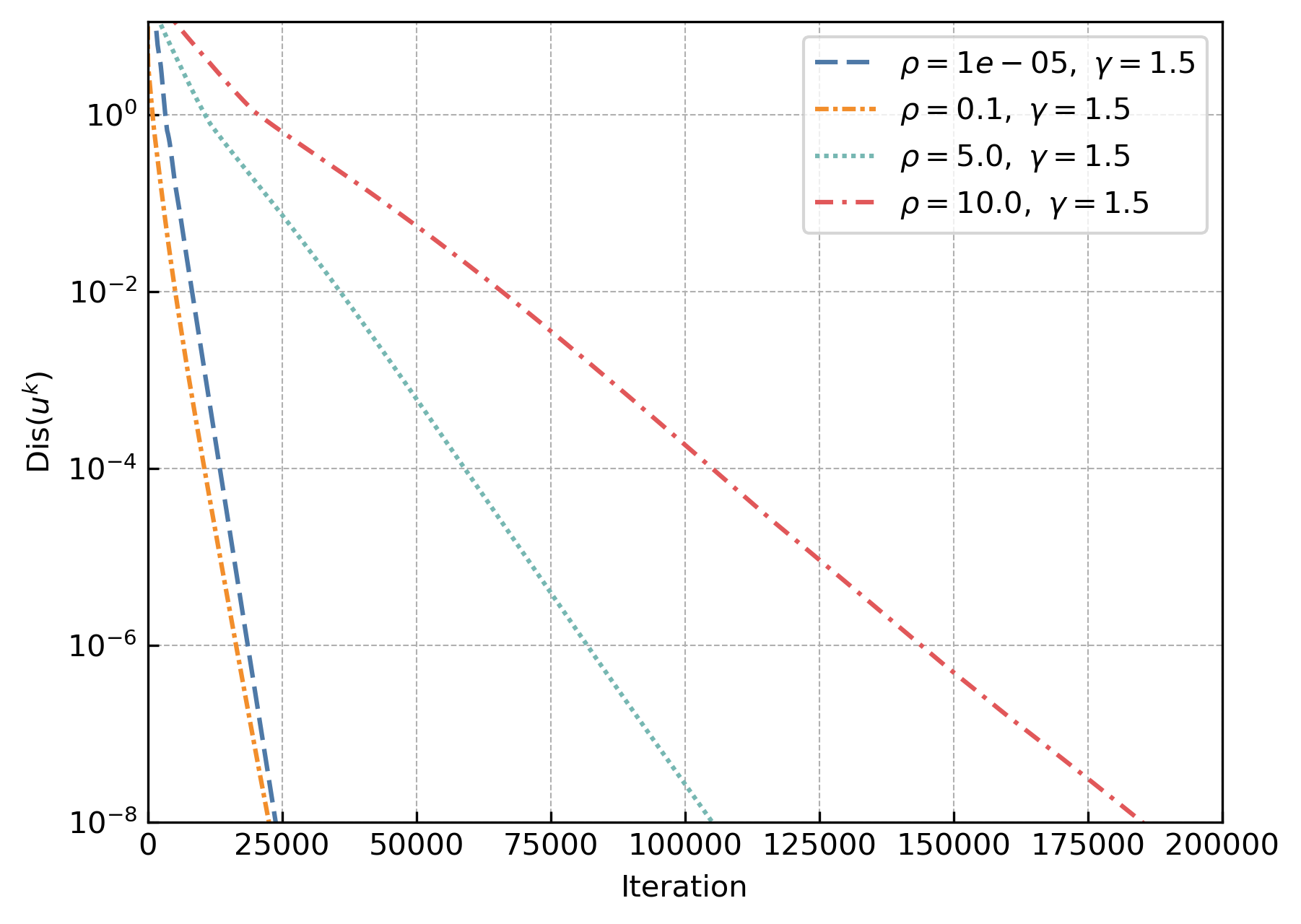} }}
    \subfloat[$\rho = \{10^{-5}, 0.1, 5, 10\}$, $\gamma = 1.9$]{{\includegraphics[width=6cm]{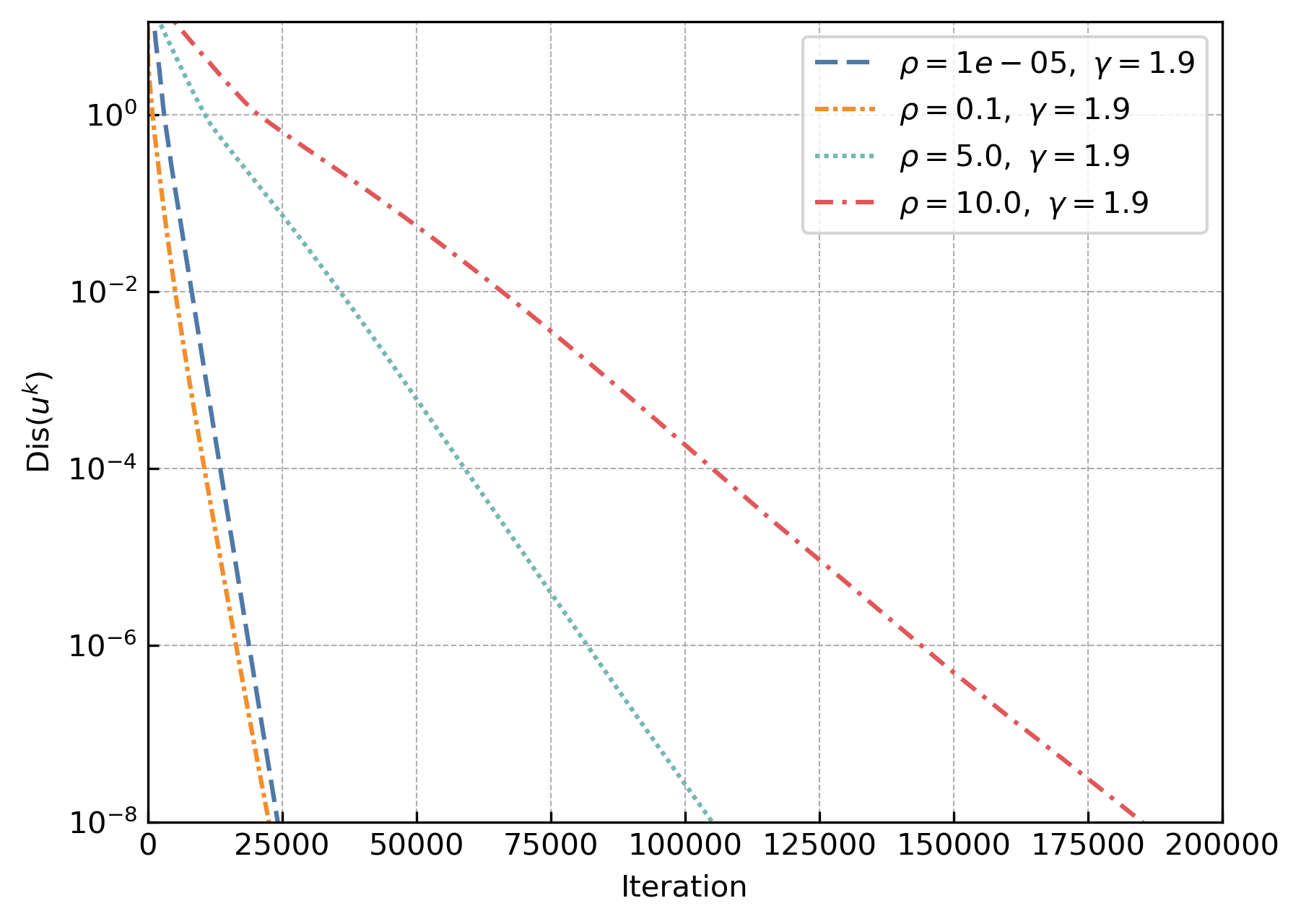} }}
    \caption{Experimental results on LCQP model for $N=10$ ($m=100$, $n=60$), with fixed $\gamma$ and varying $\rho$.}\label{fig-3}
\end{figure}

\begin{figure}[htbp]
    \centering
    \subfloat[$\rho = 10^{-5}$, $\gamma = \{0.1, 0.5, 1.5, 1.9\}$]{{\includegraphics[width=6cm]{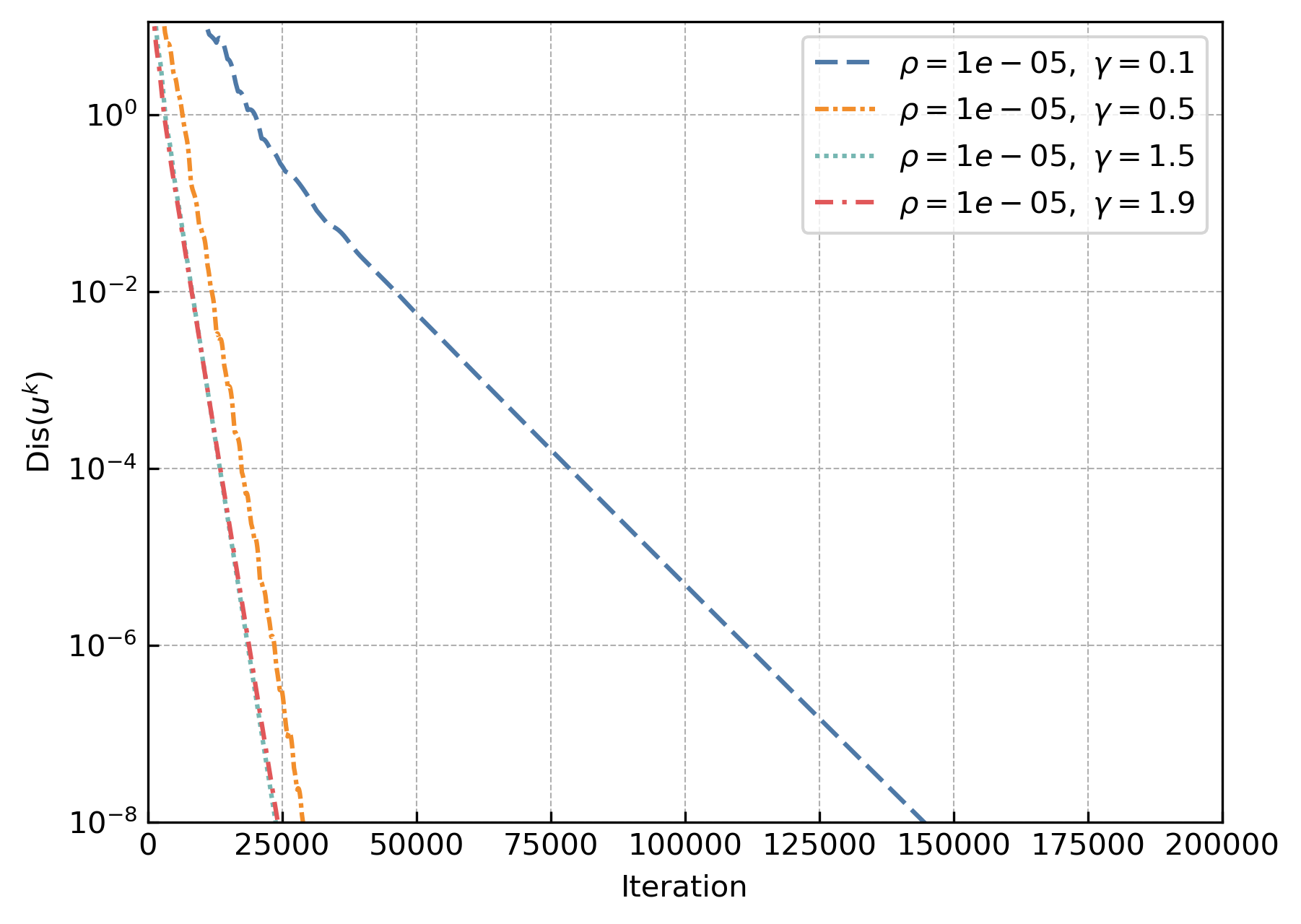} }}
    \subfloat[$\rho = 5$, $\gamma = \{0.1, 0.5, 1.5, 1.9\}$]{{\includegraphics[width=6cm]{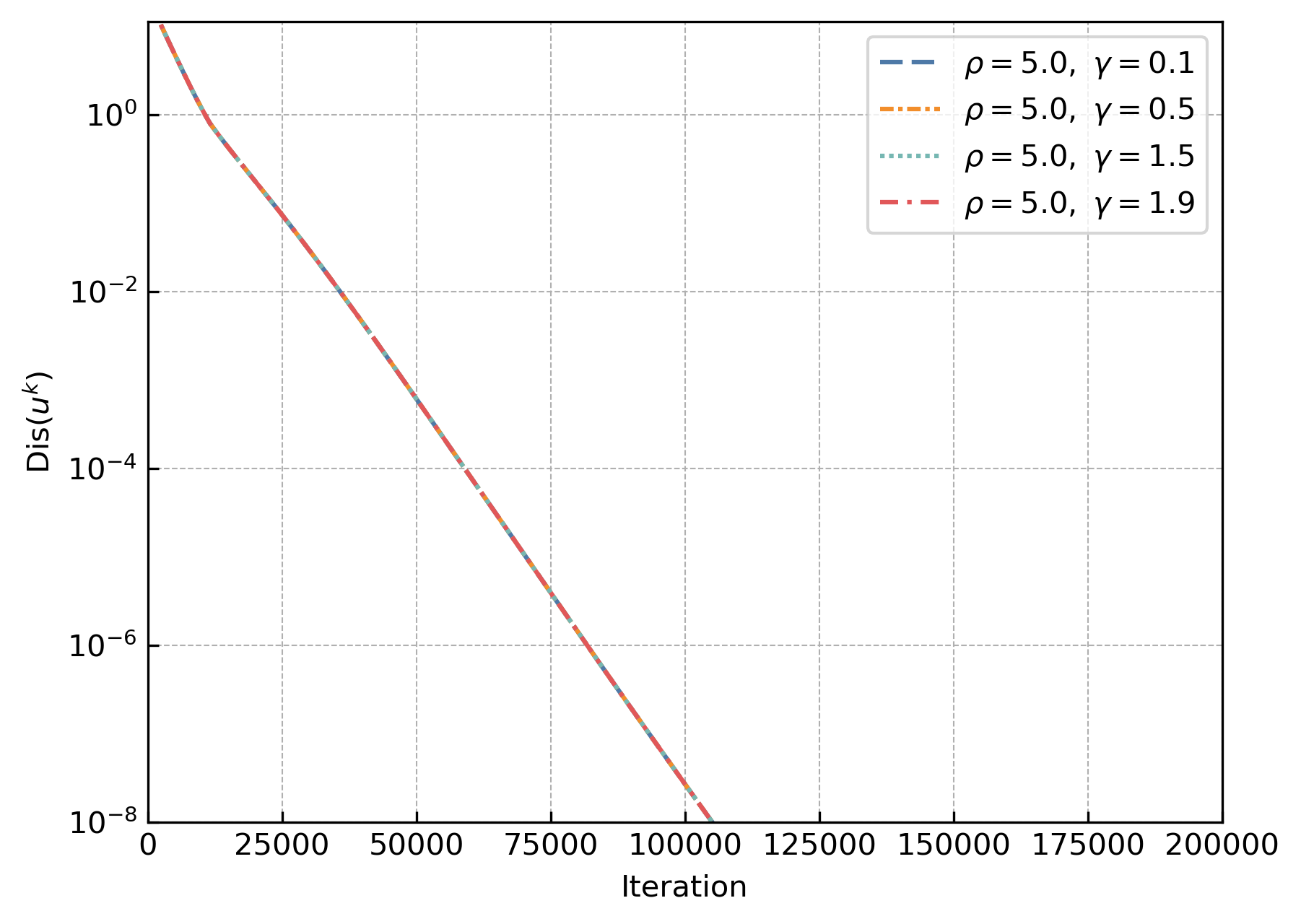} }}
    \caption{Experimental results on LCQP model for $N=10$ ($m=100$, $n=60$), with fixed $\rho$ and varying $\gamma$.}\label{fig-4}
\end{figure}

\subsection{The Optimal Resource Allocation Problem}\label{subsec-5}
In this example, we consider the following case:
\begin{equation*}
\begin{split}
&\min \sum_{i=1}^N f_i(x_i)
\\
&\textrm{s.t. } \sum_{i=1}^N x_i = 0
\end{split}
\end{equation*}
where $f_i:\mathbb{R} \rightarrow \mathbb{R}$ is given as $f_i(x_i) = (1/2)a_i(x_i-c_i)^2 + \log [1+exp(b_i(x_i-d_i))]$ with the coefficients $a_i, b_i,c_i,d_i$ generated randomly with uniform distributions on $[0,2]$, $[-2,2]$, $[-10,10]$, $[-10,10]$ respectively.
We conduct experiments with the damping parameter $\gamma \in \{0.1, 0.5, 1.5, 1.9\}$, and the penalty parameter $\rho \in \{0.03, 1, 5, 10\}$ in Algorithm \ref{algo}.
We update $x_i^{k+1}$ in \eqref{eq-1-5} using the equation derived from the optimality given as follows:
\begin{equation*}
a_i \big(x_i^{k+1}-c_i\big) + \frac{b_i exp\Big[b_i\big( x_i^{k+1}-d_i\big)\Big]}{1+exp\left[b_i\big( x_i^{k+1}-d_i\big)\right]} 
+\rho \bigg(x_i^{k+1} + \sum_{j\neq i}x_j^k - \frac{\lambda^k}{\rho} \bigg) + P_i\big(x_i^{k+1} - x_i^{k}\big)=0.
\end{equation*}

In order to study the convergence behavior of the test algorithms, we test two cases: $6$-block and $20$-block.
The value $u^* = \big(x_1^*, \ldots, x_N^* ; \lambda^*\big)$ is obtained numerically by running Algorithm \ref{algo} $4,000$ iterations.
We employ the same error metric $\text{dis}(u^k)$ as defined in \eqref{eq-4-1} for this experiment. 
The initial values $u^0 = (x_1^0, \ldots, x_N^0; \lambda^0)$ are set to zero.
The experimental results are plotted in Figures \ref{fig-5} and \ref{fig-6} for the $6$-block case, and in Figures \ref{fig-7} and \ref{fig-8} for the $20$-block case. 
We observe that the value $\text{dis}(u^k)$ converges linearly to zero, as anticipated in \mbox{Theorem \ref{thm-main}}.

Figure \ref{fig-5} illustrates that, for fixed damping parameters $\gamma \in \{0.1, 0.5, 1.5\}$, the algorithm achieves the fastest convergence when $\rho = 1$.
This suggests that simply choosing a smaller penalty parameter does not guarantee faster convergence, highlighting the need for proper choice of $\rho$ for the fast convergence.
In Figure \ref{fig-6}, the effect of the damping parameter $\gamma$ is investigated at fixed value of $\rho$.
When $\rho = 0.03$, increasing $\gamma$ leads to improved convergence rates. 
In contrast, for $\rho = 5$, the convergence rates remain similar regardless of the value of $\gamma$, implying that the algorithm's performance is relatively insensitive to changes in $\gamma$.

\begin{figure}[htbp]
    \centering
    \subfloat[$\rho = \{0.03, 1, 5, 10\}$, $\gamma = 0.1$]{{\includegraphics[width=6cm]{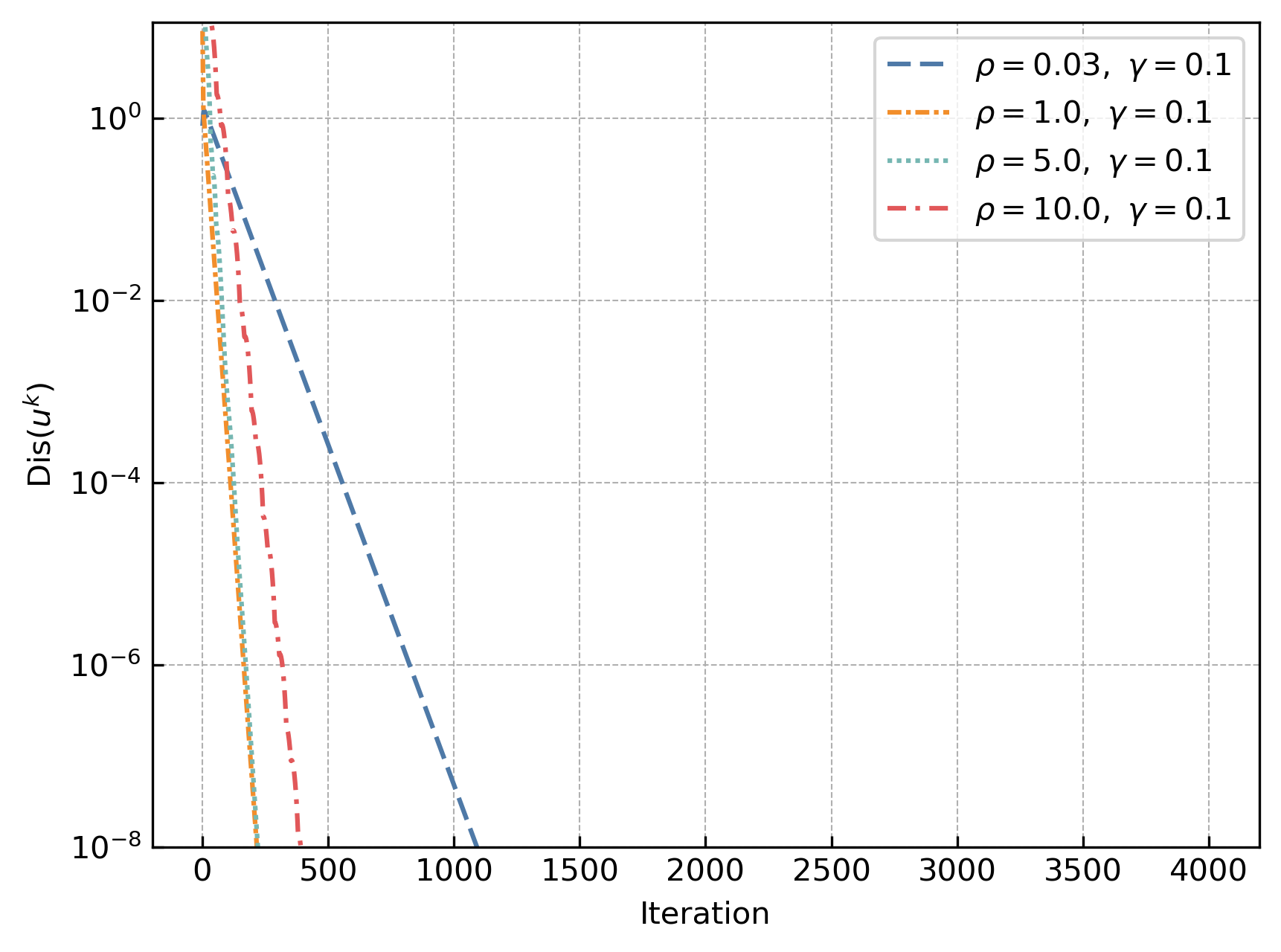} }}
    \subfloat[$\rho = \{0.03, 1, 5, 10\}$, $\gamma = 0.5$]{{\includegraphics[width=6cm]{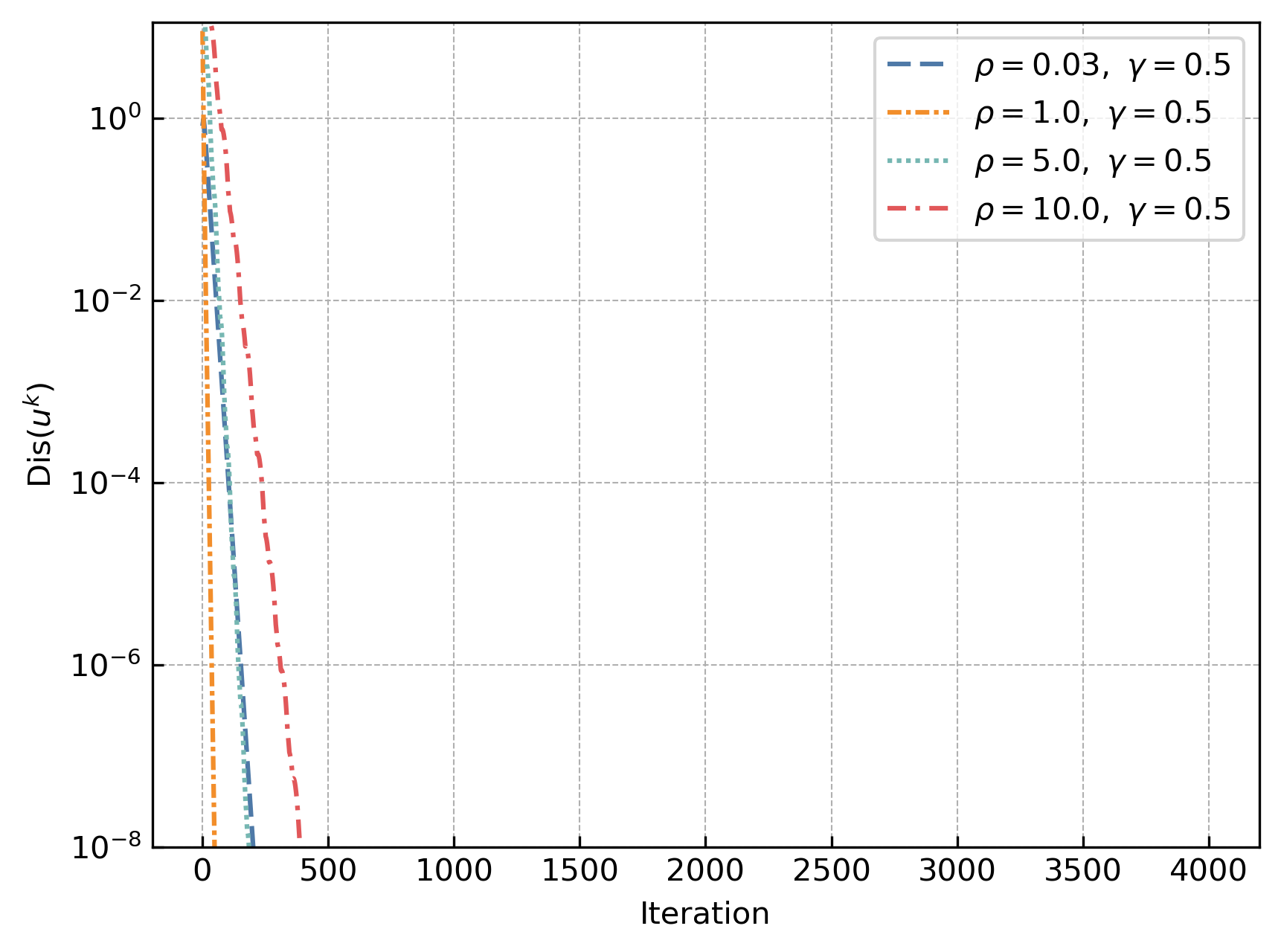} }}
    \\
    \subfloat[$\rho = \{0.03, 1, 5, 10\}$, $\gamma = 1.5$]{{\includegraphics[width=6cm]{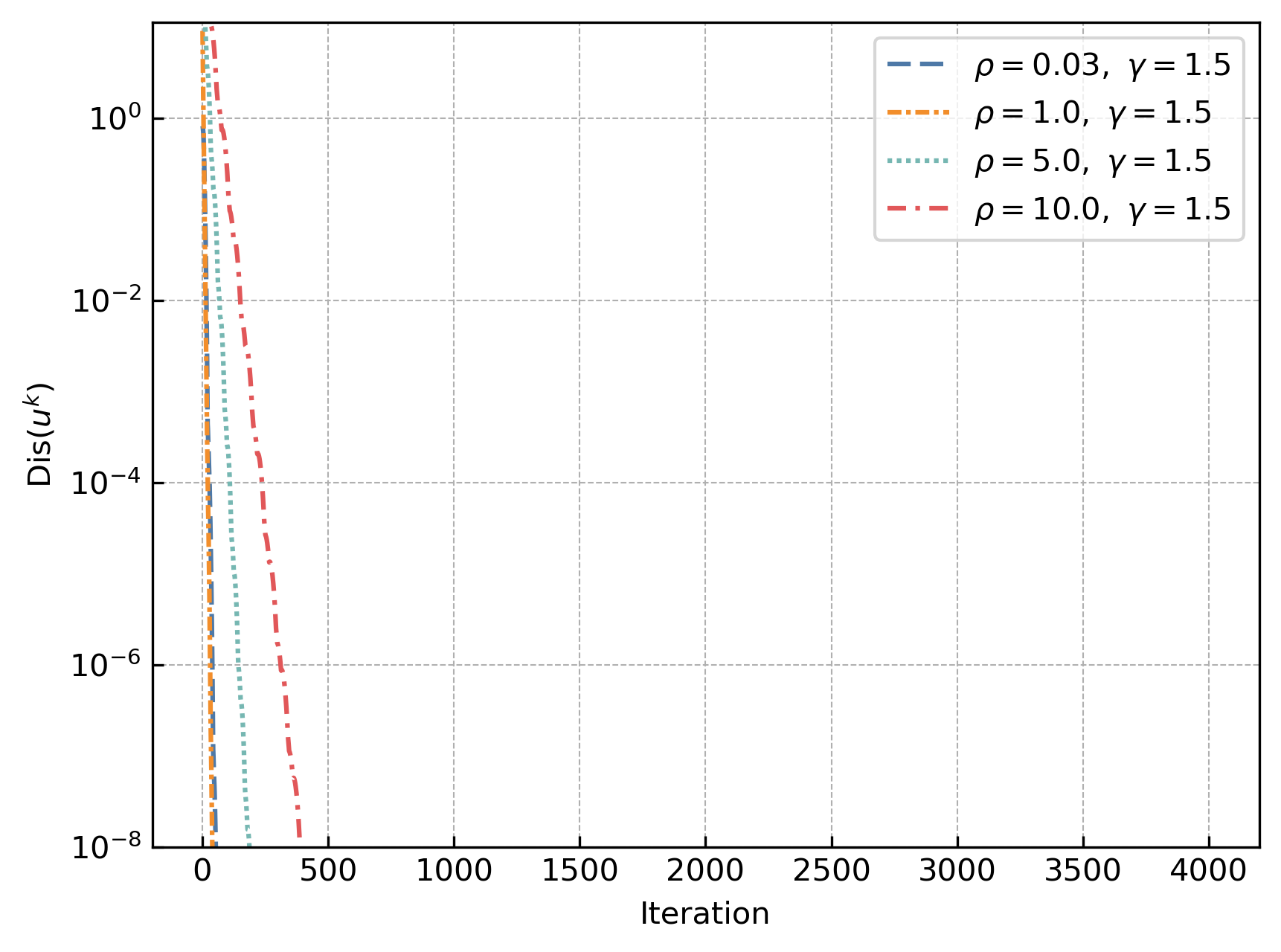} }}
    \subfloat[$\rho = \{0.03, 1, 5, 10\}$, $\gamma = 1.9$]{{\includegraphics[width=6cm]{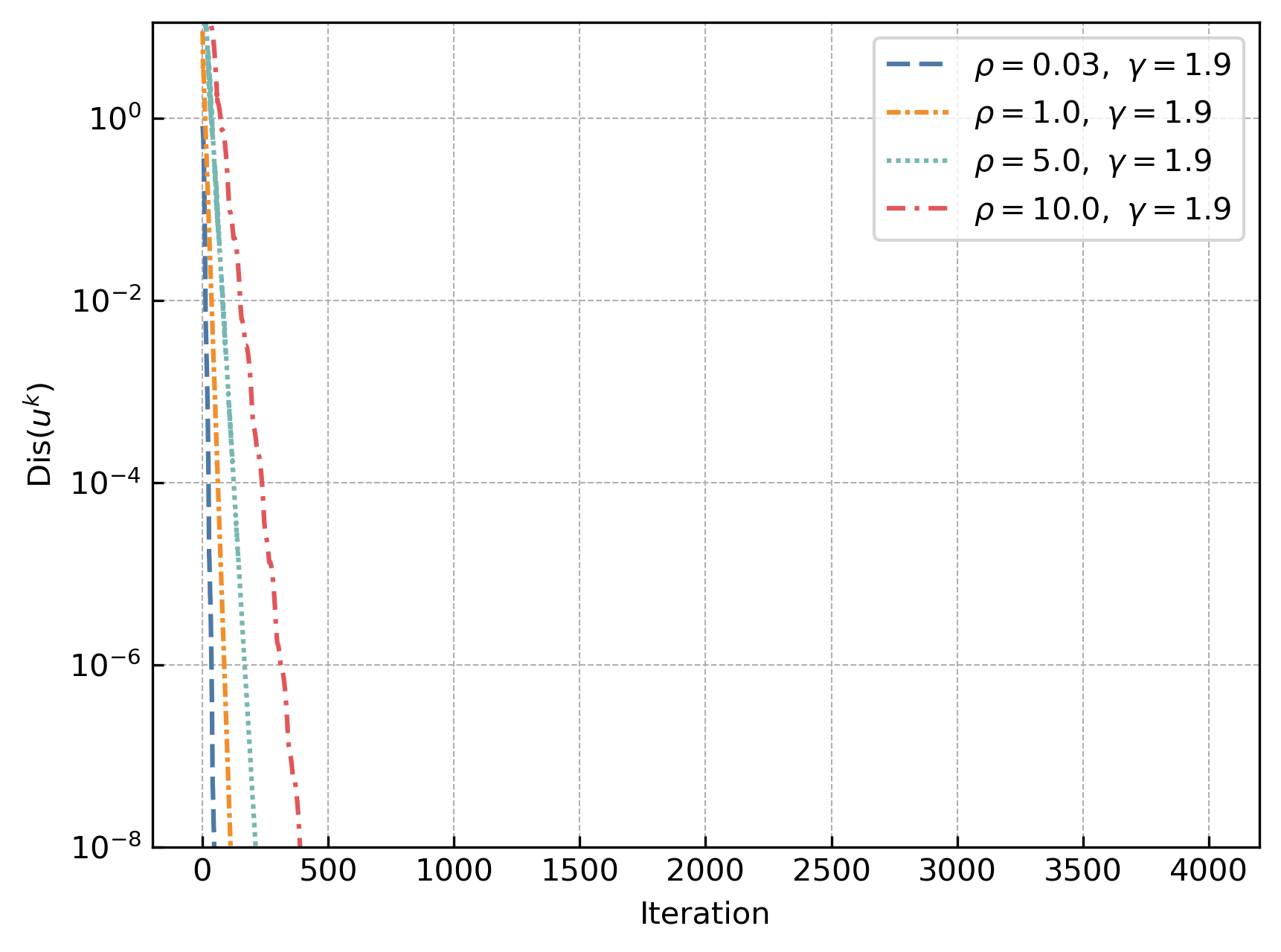} }}
    \caption{Experimental results on the optimal resource allocation model for $N=6$, with fixed $\gamma$ and varying $\rho$.}\label{fig-5}
\end{figure}

\begin{figure}[htbp]
    \centering
    \subfloat[$\rho = 0.03$, $\gamma = \{0.1, 0.5, 1.5, 1.9\}$]{{\includegraphics[width=6cm]{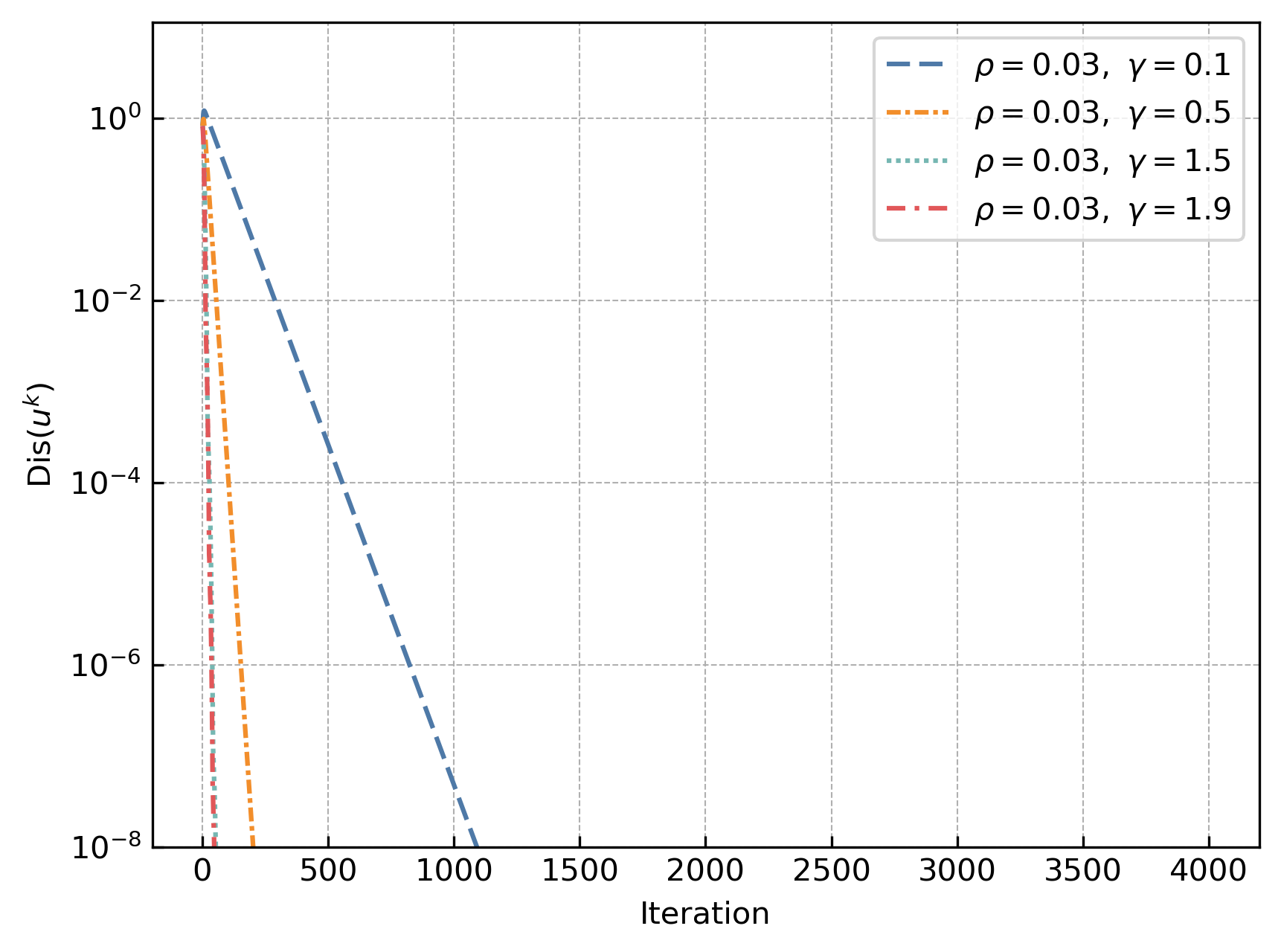} }}
    \subfloat[$\rho = 5$, $\gamma = \{0.1, 0.5, 1.5, 1.9\}$]{{\includegraphics[width=6cm]{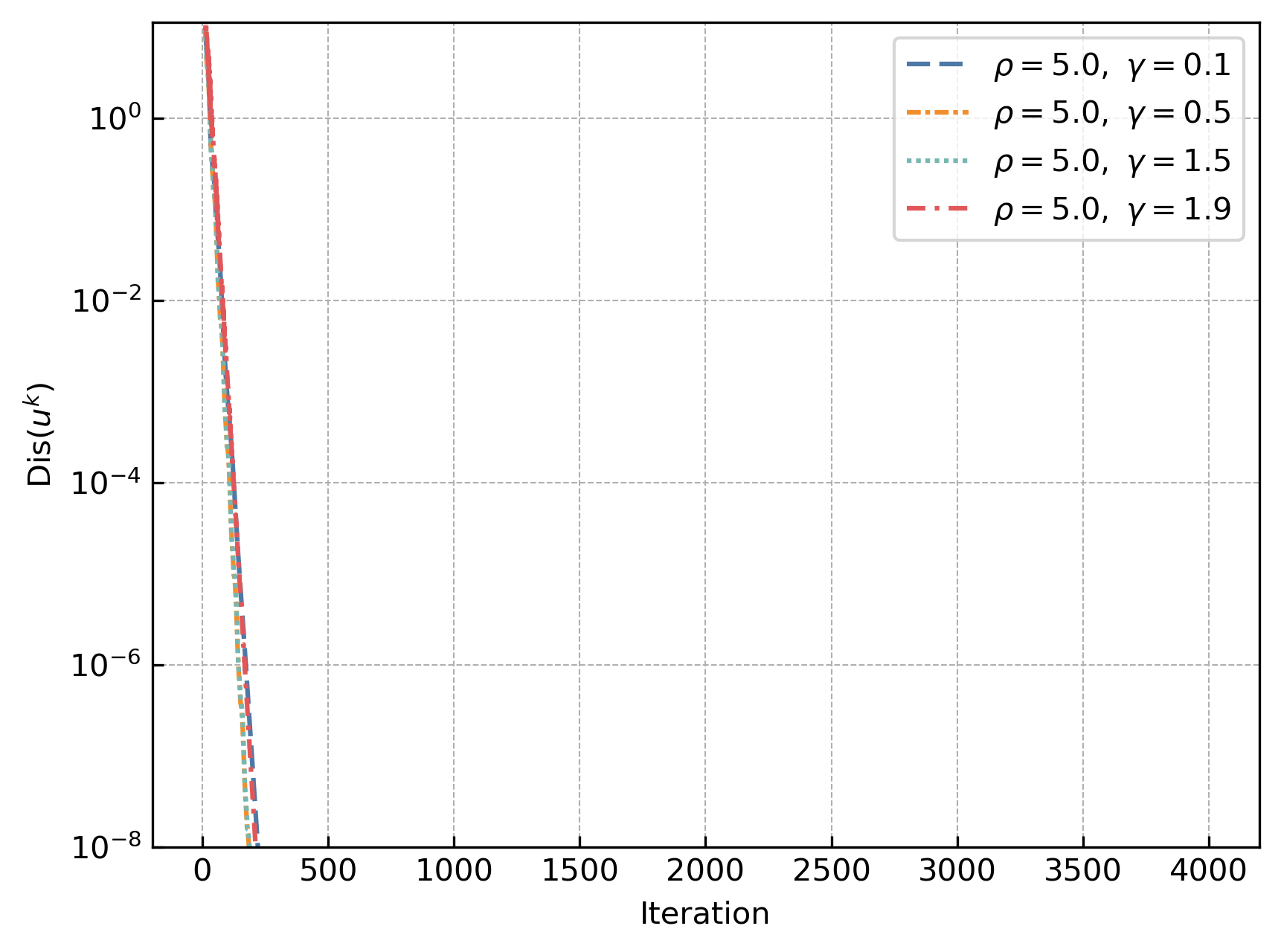} }}
    \caption{Experimental results on the optimal resource allocation model for $N=6$, with fixed $\rho$ under varying $\gamma$.}\label{fig-6}
\end{figure}

\begin{figure}[htbp]
    \centering
    \subfloat[$\rho = \{0.03, 1, 5, 10\}$, $\gamma = 0.1$]{{\includegraphics[width=6cm]{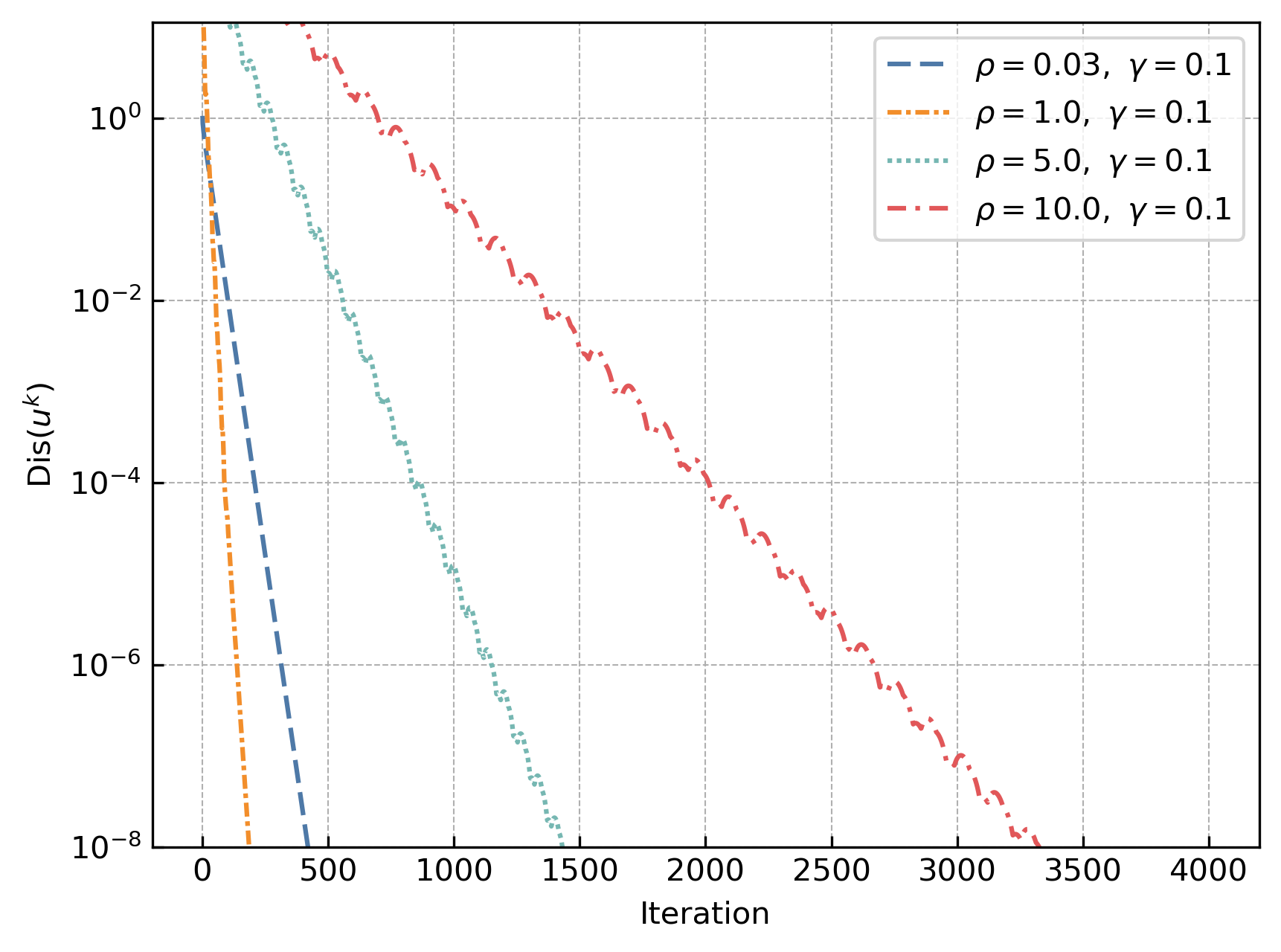} }}
    \subfloat[$\rho = \{0.03, 1, 5, 10\}$, $\gamma = 0.5$]{{\includegraphics[width=6cm]{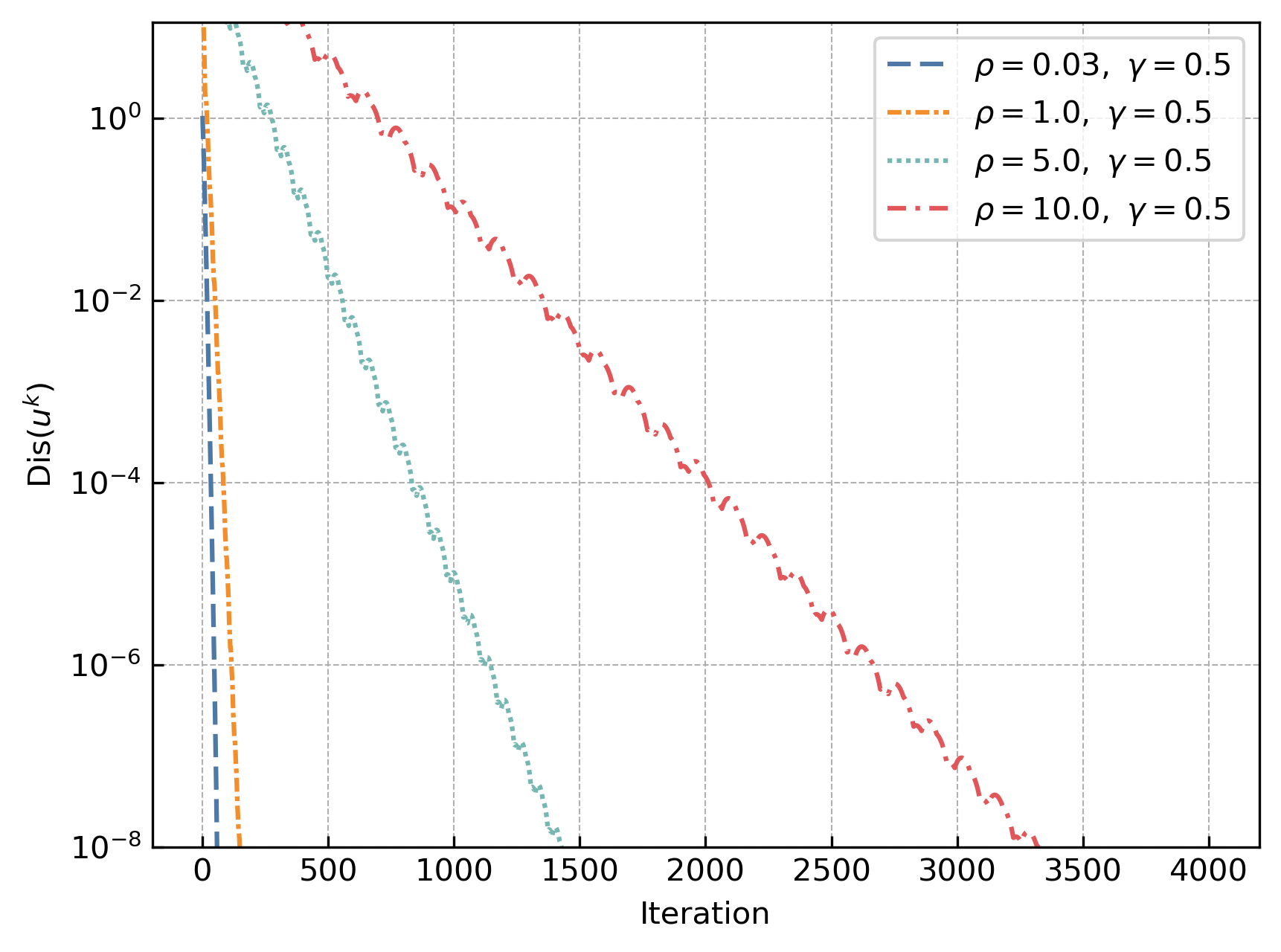} }}
    \\
    \subfloat[$\rho = \{0.03, 1, 5, 10\}$, $\gamma = 1.5$]{{\includegraphics[width=6cm]{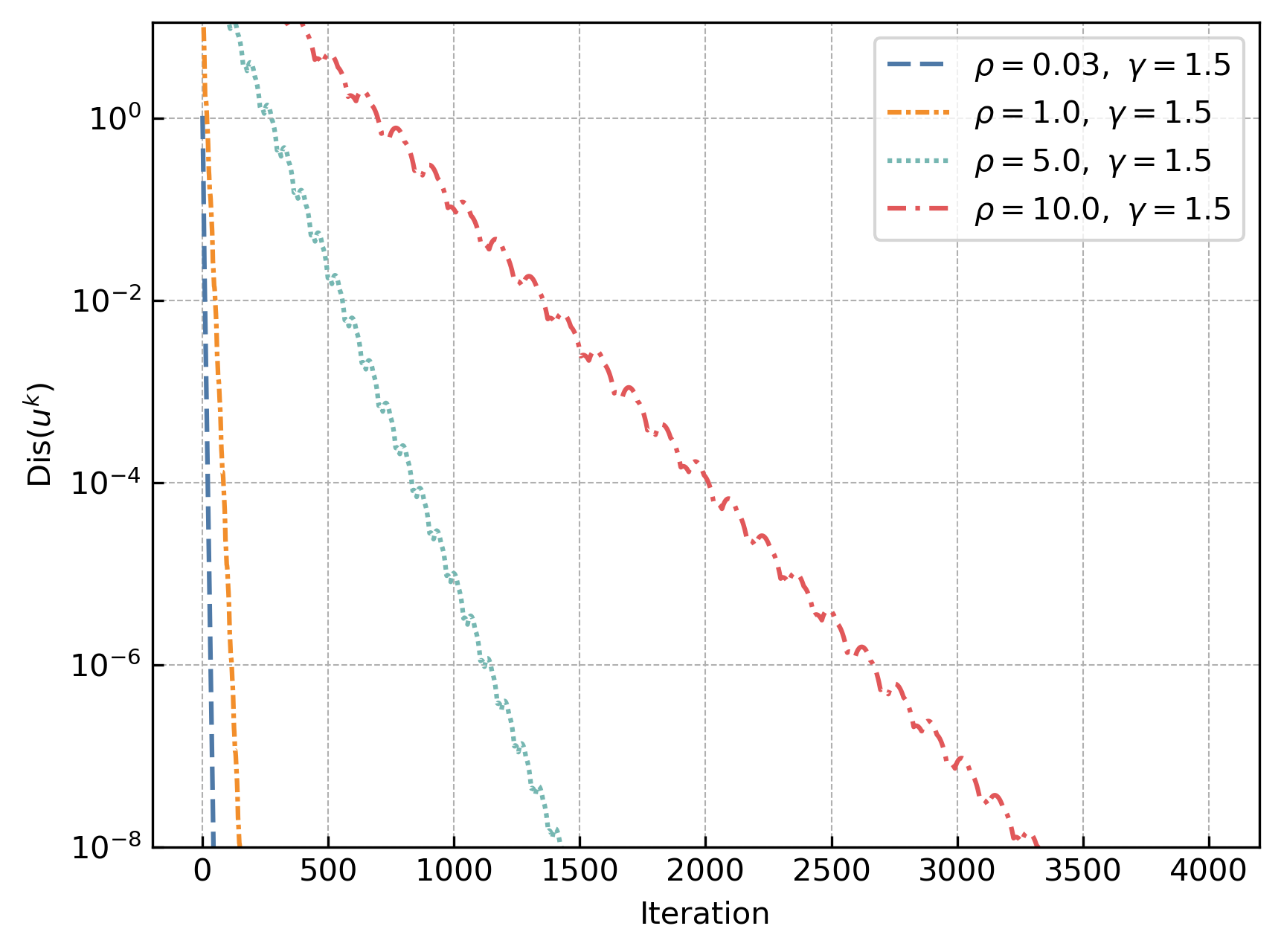} }}
    \subfloat[$\rho = \{0.03, 1, 5, 10\}$, $\gamma = 1.9$]{{\includegraphics[width=6cm]{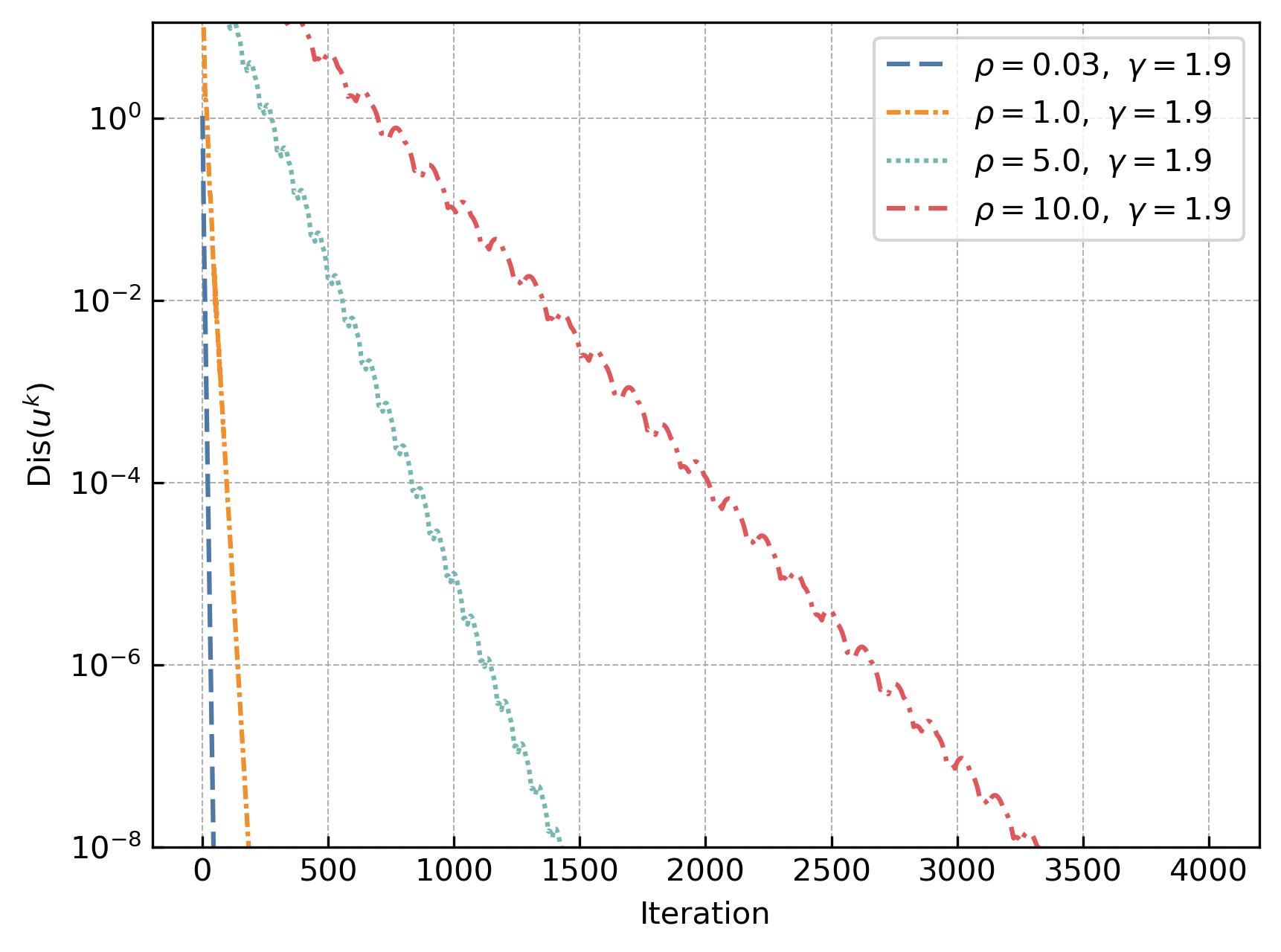} }}
    \caption{Experimental results on the optimal resource allocation model for $N=20$, with fixed $\gamma$ under varying $\rho$.}\label{fig-7}
\end{figure}

\begin{figure}[htbp]
    \centering
    \subfloat[$\rho = 0.03$, $\gamma = \{0.1, 0.5, 1.5, 1.9\}$]{{\includegraphics[width=6cm]{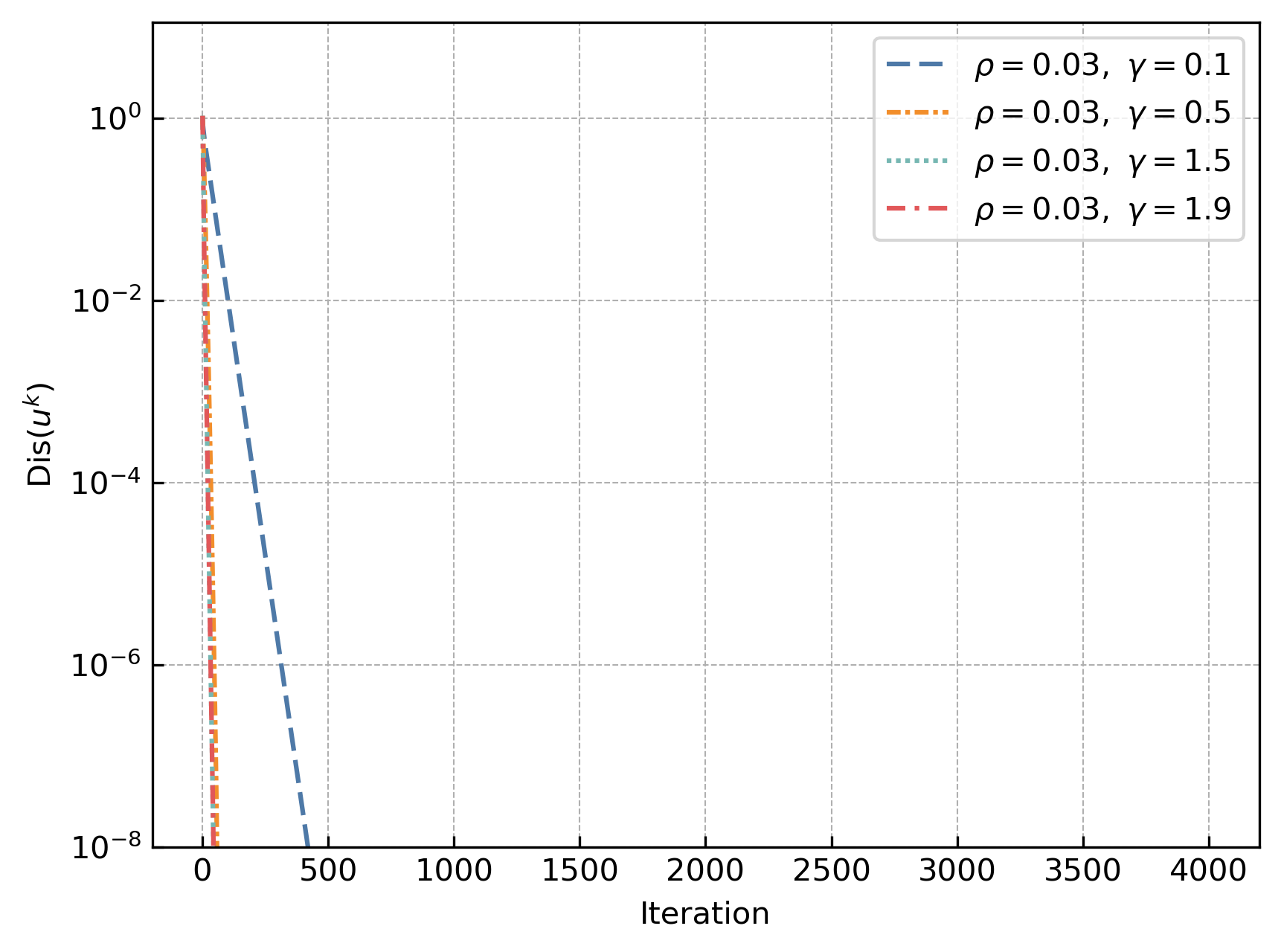} }}
    \subfloat[$\rho = 5$, $\gamma = \{0.1, 0.5, 1.5, 1.9\}$]{{\includegraphics[width=6cm]{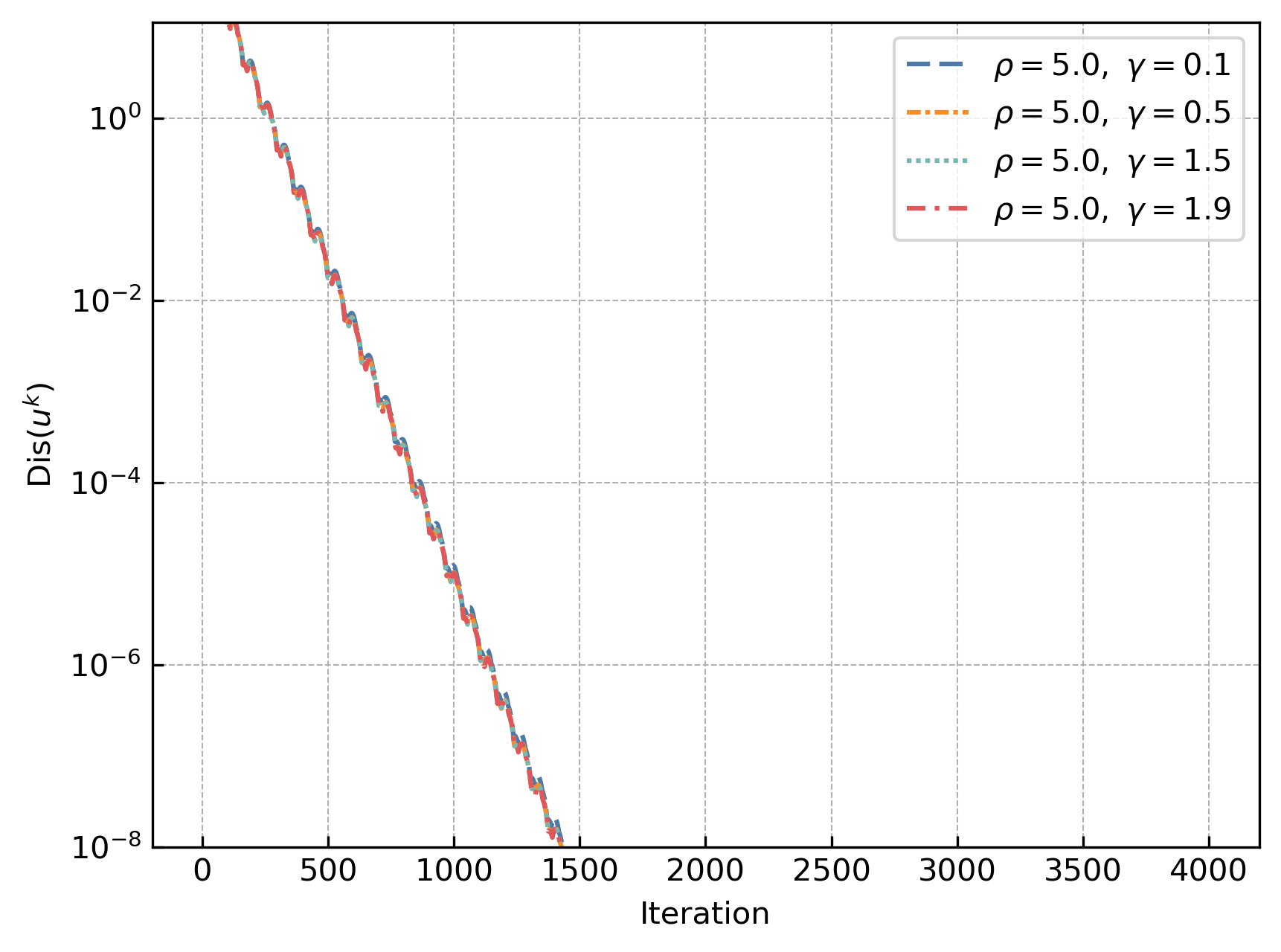} }}
    \caption{Experimental results on the optimal resource allocation model for $N=20$, with fixed $\rho$ under varying $\gamma$.}\label{fig-8}
\end{figure}

\section{Conclusion}\label{sec-5}
In this paper, we study the Jacobi-Proximal ADMM algorithm designed to solve the linearly coupled optimization problems in a parallel way.
We achieved the first result for the linear convergence of the algorithm and derived its convergence rate when the cost functions are strongly convex and Lipschitz continuous.
This is the first linear convergence result for the Jacobi-Proximal ADMM.
We also presented numerical experiments supporting the convergence result.

\section*{Declarations}
\subsection*{Funding}
No funding was received for conducting this study.

\subsection*{Competing interests}
The authors have no relevant financial or non-financial interests to disclose.

\section*{Code availability}
The Python code used for the numerical experiments in Section \ref{sec-4} is publicly available at \url{https://github.com/hyelinchoi/Jacobi_Proximal_ADMM}.

\bibliography{bib}

\end{document}